 \renewcommand{\thesubsection}{\arabic{section}.\arabic{subsection}} 
\newlength\fullwidth
\numberwithin{equation}{section}
\DeclareMathSymbol{\leqslant}{\mathalpha}{AMSa}{"36} 
\DeclareMathSymbol{\geqslant}{\mathalpha}{AMSa}{"3E} 
\DeclareMathSymbol{\eset}{\mathalpha}{AMSb}{"3F}     
\renewcommand{\leq}{\;\leqslant\;}                   
\renewcommand{\geq}{\;\geqslant\;}                   
 \def\1{\ifmmode {1\hskip -3pt
    \rm{I}} \else {\hbox {$1\hskip -3pt \rm{I}$}}\fi}
\newcommand{\var}{\operatorname{Var}} 
\newcommand{\argmax}{\operatorname{argmax}} 
\newcommand{\sign}{{\rm sg}}
\newcommand{\R}{\mathbb{R}}
\newcommand{\Z}{\mathbb{Z}}
\renewcommand{\O}{\Omega}
\newtheorem{theorem}{Theorem}[section] 
\newtheorem{assumption}{Assumption}[section] 
\newtheorem{lemma}[theorem]{Lemma} 
\newtheorem{proposition}[theorem]{Proposition} 
\newtheorem{rem}[theorem]{Remark} 
\newtheorem{corollary}[theorem]{Corollary}
\newtheorem{definition}[theorem]{Definition}
\newcommand{\N}{\mathbb N}
\newcommand{\bP}{{\bf P}} 
\newcommand{\bff}{{\bf f}}
\newcommand{\bbE}{{\ensuremath{\mathbb E}} }
\newcommand{\bbL}{{\ensuremath{\mathbb L}} } 
\newcommand{\bbN}{{\ensuremath{\mathbb N}} } 
\newcommand{\bbP}{{\ensuremath{\mathbb P}} } 
\newcommand{\bbR}{{\ensuremath{\mathbb R}} }
\newcommand{\bbZ}{{\ensuremath{\mathbb Z}} }
\newcommand{\gep}{\varepsilon}
\newcommand{\gO}{\Omega}
\newcommand{\ind}{{\bf 1}}
\newcommand{\gd}{\delta}
\newcommand{\dd}{\mathrm{d}}
\newcommand{\gl}{\lambda}
\newcommand{\DD}{\mathcal{D}}
\newcommand{\DDD}{\mathscr{D}}
\newcommand{\8}{\infty}
\newcommand{\gL}{\Lambda}
\newcommand{\gD}{\Delta}
\newcommand{\be}{\mathbf{e}}
\newcommand{\bof}{\mathbf{f}}
\newcommand{\mal}{\mathcal A_L}
\begin{document}

\title[Curve-shortening evolution for the 2D Ising Model]{Zero-temperature 2D stochastic Ising model  and anisotropic 
curve-shortening flow}

\author[H. Lacoin]{Hubert Lacoin}
\address{H. Lacoin, 
CEREMADE - UMR CNRS 7534 - Universit\'e Paris Dauphine,
Place du Mar\'echal de Lattre de Tassigny, 75775 CEDEX-16 Paris, France. \newline
e--mail: {\tt lacoin@ceremade.dauphine.fr}}

\author[F. Simenhaus]{Fran\c{c}ois Simenhaus}
\address{F. Simenhaus, 
CEREMADE - Universit\'e Paris Dauphine - UMR  CNRS 7534,
Place du Mar\'echal de Lattre de Tassigny, 75775 CEDEX-16 Paris France. \newline
e--mail: {\tt simenhaus@ceremade.dauphine.fr}}
\author[F. L. Toninelli]{Fabio Lucio Toninelli} 
\address{F. L. Toninelli, CNRS and Universit\'e Lyon 1, 
Institut Camille Jordan,\newline
  43 bd du 11 novembre 1918, 69622 Villeurbanne, France.  \newline e--mail: {\tt
   toninelli@math.univ-lyon1.fr}}

\begin{abstract}
  Let $\DD$ be a simply connected, smooth enough domain of $\bbR^2$.
  For $L>0$ consider the continuous time, zero-temperature heat bath
  dynamics for the nearest-neighbor Ising model on $\mathbb Z^2$ with
  initial condition such that $\sigma_x=-1$ if $x\in L\DD$ and
  $\sigma_x=+1$ otherwise. It is conjectured \cite{cf:Spohn} that, in
  the diffusive limit where space is rescaled by $L$, time by $L^2$
  and $L\to\infty$, the boundary of the droplet of ``$-$'' spins
  follows a \emph{deterministic} anisotropic curve-shortening flow,
  where the normal velocity at a point of its boundary is given by the
  local curvature times an explicit function of the local slope.  The
  behavior should be similar at finite temperature $T<T_c$, with a
  different temperature-dependent anisotropy function.

  We prove this conjecture (at zero temperature) when $\DD$ is convex.
  Existence and regularity of the solution of the deterministic
  curve-shortening flow is not obvious \textit{a priori} and is part of our
  result.  To our knowledge, this is the first proof of mean
  curvature-type droplet shrinking for a model with genuine
  microscopic dynamics.
   \\
  \\
  2000 \textit{Mathematics Subject Classification: 60K35, 82C20 }
  \\
  \textit{Keywords: Ising model, Glauber dynamics, Curve-shortening flow.}

\end{abstract}

\thanks{F. T. was partially supported by European Research Council through the
“Advanced Grant” PTRELSS 228032 and by ANR project SHEPI}

\maketitle
\tableofcontents
\thispagestyle{empty}

\section{Introduction}

Consider a thermodynamic system with two coexisting phases and imagine
to prepare it in an initial condition where a droplet of one phase is
immersed in the other phase. If the system undergoes a dynamics that
does not conserve the order parameter, it is well understood
phenomenologically \cite{cf:Lifshitz} that the droplet will shrink in
order to decrease its surface tension until it eventually disappears, and that (roughly speaking) the
normal speed at a point of its boundary will be proportional to the
local mean curvature.  Deriving such behavior from first principles,
i.e.\ from a microscopic model undergoing a local (stochastic)
dynamics, is a much harder task and this program was started only
rather recently \cite{cf:Spohn}.  More precisely, what one expects is
that, if the initial droplet is of diameter $L$, it will ``disappear''
within a time of order $L^2$ (this behavior is sometimes referred to
as ``Lifshitz law''). Moreover, in the
``diffusive limit'' where $L\to\infty$ and at the same time space is
rescaled by $L$ (so that the initial droplet is of size $O(1)$) and
time is accelerated by $L^2$, the droplet evolution should become
deterministic and follow some anisotropic version of a mean curvature
flow. Anisotropy (i.e.\ the fact that the normal velocity will also
depend on the local orientation of the droplet boundary) is expected when
the underlying model is defined on a lattice, as will be the case for
us.

Up to now, mathematical progress on this issue has been rather
modest, the main difficulty being that it is not clear how to
implement the idea that the fast modes related to relaxation inside
the two pure phases should decouple from slow modes related to the
interface motion, which are responsible for the diffusive time scaling $L^2$.

A fairly well understood situation is that where the interface can be
described by a height function and the bulk structure of the two
phases is disregarded. This is possible (by definition) for the
so-called ``effective interface models'' or Ginzburg-Landau
$\nabla\phi$ interface models: for models with continuous heights and
strictly convex potential undergoing a Langevin-type dynamics, Funaki
and Spohn \cite{FS} derived the full mean-curvature motion in the
diffusive scaling.  Another well-studied case is that of models with
Kac-type potentials: in this case, mean-curvature motion can be proven
to emerge \cite{K1,K2,K3} in a limit where interaction range is
taken to infinity at some stage, but in this limit there is no sharp
interface separating the phases and the system becomes very close to
mean-field.  

As for true lattice models, results are much more scarce. For
instance, for the two-dimensional nearest-neighbor Ising model below
the critical temperature, the best known upper bound on the ``disappearance
time'' for a droplet of ``$-$ phase'' immersed in the ``$+$ phase'' is
of the order $L^{c(T)\log L}$ \cite{cf:LSMT}, very far from the
expected $L^2$ scaling. Recently, a weak version of the Lifshitz law
was proven for the three-dimensional Ising model at zero temperature:
the disappearance time of a ``$-$'' droplet is of order $L^2$ (upper and lower bounds), up to
multiplicative logarithmic (in $L$) corrections \cite{cf:CMST}.
When the dimension is higher than three (always at zero temperature), 
an \emph{upper bound} for the disappearance time of order $L^2(\log L)^c$, for some constant $c$, was  proven in \cite{cf:L}.

In this work, we concentrate on the two-dimensional nearest-neighbor
Ising model on the infinite square lattice. The dynamics takes a very
simple form: each spin is updated with rate one and after the update
it takes the same value as the majority of its neighbors, or the value
$\pm1 $ with equal probabilities if exactly two neighbors are ``$+$''
and two are ``$-$''.  In this case, the disappearance time of a large
``$-$'' droplet should be asymptotically given by one half its volume
(number of ``$-$'' spins).  Moreover, in the diffusive scaling limit
the droplet boundary should be given by a deterministic curve
$\gamma(t)$ whose normal speed is given by the local (signed) curvature,
times a function $a(\theta)$ where $\theta$ is the angle of the local
normal vector. The function $a(\cdot)$ is explicitly known, see \eqref{eq:a}.
In this two-dimensional setting, it is more natural to refer to such flow as 
``(anisotropic) curve-shortening flow'' (rather than ``mean curvature flow'').

Our main result (Theorem \ref{th:convex}) is a proof of the
curve-shortening conjecture (and, as a byproduct, of the Lifshitz law)
when the initial droplet is convex.

There are some previous partial results available on this problem.  The scaling limit of the evolution
when initially spins are ``$-$'' in the first quadrant of $\mathbb
Z^2$ (infinite corner) and ``$+$'' elsewhere is described  in 
\cite[Section 4.2]{cf:KL}  (with the language of exclusion
processes rather than spin systems). This is a simple
situation because the interface motion is mapped to symmetric simple
exclusion and is described by the associated height function at all times.  In
\cite{cf:Spohn}, Spohn described the scaling limit of the interface
motion in a situation that more or less corresponds to the
zero-temperature Ising model in an infinite vertical cylinder, with an
initial condition such that the interface separating ``$+$'' from
``$-$'' spins can be globally described by a height function at all
times (in particular, this cannot describe a droplet, and implicitly
he has to modify the dynamics to guarantee that droplets do not appear
later in the evolution).  In \cite{cf:CSS}, Chayes {\sl et al.} proved
the Lifshitz law (but not the curve-shortening conjecture) for a
modified dynamics where updates which break the droplet into several
droplets are forbidden. In \cite{cf:CL}, Cerf and Louhichi computed the
``drift at time $0$'' of the droplet (for the non-modified dynamics),
but their result does not allow to get information on the evolution
for finite time $t>0$.

An important building block of our proof of the anisotropic
curve-shortening conjecture is that, as was well understood by Spohn
in \cite{cf:Spohn}, locally the interface can be (roughly speaking)
described by the hydrodynamic limit of a certain zero-range process at
the points where the tangent to the boundary is horizontal or
vertical, and by the hydrodynamic limit of the symmetric simple
exclusion process elsewhere.  However, such correspondence is not exact
due to updates that split the droplet into more than one connected components
(see for instance Figure \ref{fig:interditmove}). In other words, the
interface is not (even locally) the graph of a function. Also, it is a non-trivial
task to patch together the various pieces of ``local analysis'' to control globally
the evolution of the droplet. Both problems will be tamed by a sequence of
monotonicity arguments, which are allowed because the dynamics conserves the
stochastic ordering among configurations.

In order to prove Theorem \ref{th:convex}, we also need to know that a
classical solution to the anisotropic curve-shortening flow exists up to the
time where the droplet disappears, and  (crucially) that such
solution is sufficiently regular in space and time (i.e.\ that the curvature is a Lipschitz
function of the angle and a continuous function of time). To our surprise, we found that the existing
literature on curve-shortening flows does not provide global (in time)
results for the flow associated to the zero-temperature 2d Ising
model. The reason is that the anisotropy function $a(\cdot)$ is not
smooth (its derivative has jumps, reflecting the singularities of the
surface tension at zero temperature), while the existing results
assume that $a(\cdot)$ is at least $C^2$, cf.\ \cite{GL1,GL2}. To prove
existence, uniqueness and regularity of the solutions (cf.\ Theorem
\ref{th:deterministico}), we will regularize the function $a(\cdot)$
and then analyze the regularized flow following the ideas of
\cite{GL1,GL2}.  Of course, it will be crucial to guarantee that all
the estimates we need are uniform in the regularization parameter,
which tends to zero in the end.

\medskip

The case where the initial ``$-$'' droplet is non-convex will be
considered in future work. The additional difficulties are
two-fold.  First of all, from the analytic point of view, available global
existence and regularity results for the solution of curve-shortening
flows with non-convex initial condition seem to be limited to the isotropic case
where $a(\cdot)\equiv 1$ 
\cite{Grayson}. Secondly, due to the fact that the droplet will move at
the same time outwards and inwards at different locations according to
the sign of the curvature, various monotonicity arguments we use in
the rest of the paper will not work.

\section{Model and results}
\label{sec:modelresult}

\subsection{Glauber dynamics and expected limiting evolution}

Set $\bbZ^*:=\bbZ+\frac{1}{2}:=\{x+(1/2)\ | x\in \bbZ\}$.  We
consider the zero-temperature stochastic Ising model on $(\bbZ^*)^2$
with its usual lattice structure ($x$ and $y$ are linked if $|x-y|=1$
for the $l_1$ norm). This is a continuous time Markov chain
$(\sigma(t))_{t\ge 0}$ on the space of spin configurations on $(\bbZ^*) ^2$, $\gO:=\{-1,1\}^{(\bbZ^*) ^2}$. We write
$\sigma(t)=(\sigma_x(t))_{x\in (\bbZ^*)^2}$ and for simplicity
we  write $\sigma_x=-$ (resp. $\sigma_x=+$)
instead of $\sigma_x=-1$ (resp.  $\sigma_x=+1$).

The transition rules are the following : for each site $x\in
(\bbZ^*)^2$, the value $\sigma_x$ of the spin at $x$ is updated
independently with rate one.  When the spin at a site is updated, it
takes the same value as the spin of the majority of its neighbors, or
the values $\pm1$ with equal probabilities $1/2$ if two neighbors have
"$+$" spins and the other two "$-$" spins.  
That these rules yield a well-defined Markov chain even in infinite
volume is a standard fact (cf.\ \cite{cf:Lig}). In what follows
(cf.\ \eqref{bien}), we will consider only initial conditions where
the number of ``$-$'' spins is finite.  It is easy to realize that the
spins outside the smallest square containing all the initial ``$-$''
spins stay ``$+$'' forever, so that in reality we have a dynamics on a
finite volume and the question of existence of the process is trivial.

We are interested in the evolution of the set of "$-$" spins for this
Markov chain when the initial condition $\sigma(0)$ is a large droplet, i.e.\
a finite connected set of "$-$" spins surrounded by "$+$" spins. In that
case, almost surely, after a finite time $\tau_+$, all the "$-$" spins
have turned to "$+$" and the dynamics will stay forever in the all "$+$"
configuration (which is an absorbing state).  Our aim is to describe
the evolution of the shape of the rescaled ``$-$'' droplet on a proper
(diffusive)
time-scale. In the next paragraph we make that aim more precise.

\medskip

We consider a compact, simply connected subset $\mathcal D\subset
[-1,1]^2$ whose boundary is a closed smooth curve.  Given $L\in \N$ we
consider the Markov chain described above with  initial 
condition
\begin{equation}\label{bien}
\sigma_x(0)=\begin{cases} - 1 \quad &\text{ if } x\in (\bbZ^*)^2\cap L\mathcal D,
                          \\ +1 \quad &\text{ otherwise}.
            \end{cases}
 \end{equation}
In order to see a set of "$-$" spins as a subset of $\bbR^2$, each
vertex $x\in (\bbZ^*)^2$ may be identified with the closed square of
side-length one centered at $x$,
\begin{equation}
\mathcal C_x:= x+[-1/2,1/2]^2.
\end{equation}
One defines 
\begin{equation}\label{mal}
\mal(t):=\bigcup_{\{x: \ \sigma_x(t)=-1\}} \mathcal C_x,
\end{equation}
which is the ``$-$ droplet'' at time $t$ for the dynamics.  The
boundary of $\mal(t)$ is a union of edges of $\Z^2$ (this is the only
reason why we defined the Ising model on $(\Z^*)^2$).
\medskip
                                                   
What was conjectured by Lifshitz \cite{cf:Lifshitz} on heuristic
grounds for the low temperature Ising model is that $\mal(t)$ should
follow an anisotropic curve shortening  motion: after rescaling space
by $L$ and time by $L^2$ and letting $L$ tend to infinity, the motion
of the interface between $\mal(t)$ and its complement should be
deterministic and the local drift of the interface should be
proportional to the curvature, with an anisotropic correction to
reflect anisotropy of the underlying lattice.  More precisely, one can
formulate this conjecture as follows \cite{cf:Spohn}: Let
$\gamma(t,L)$ denote the boundary of the (random) set $(1/L)\mal(L^2
t)$. Then, for $L\to\infty$, $\gamma(t,L)$ should converge to a
deterministic curve $\gamma(t)$ and the evolution of
$(\gamma(t))_{t\ge0}$ should be such that the normal velocity at a
point $x\in\gamma(t)$ is given by the curvature at $x$, times an anisotropic
factor $a(\theta_x)$, where $\theta_x$ is the slope of the outwards
directed normal
to $\gamma(t)$ at $x$. The velocity is directed inwards at points
where $\gamma(t)$ is convex and outwards at points where it is 
concave.
The function $a(\cdot)$ should have the explicit expression
\begin{eqnarray}
  \label{eq:a}
  a(\theta):=\frac1{2(|\cos(\theta)|+|\sin(\theta)|)^2}.
\end{eqnarray}
In particular, the curve $\gamma(t)$ should shrink to a point in a finite
time 
\[
t_0=\frac{Area(\mathcal D)}{\int_0^{2\pi} a(\theta) \dd\theta}=
\frac{Area(\mathcal D)}2.
\]

Note that the function $a(\cdot)$ is symmetric around $0$ and is periodic
with period $\pi/2$, which reflects the discrete symmetries of the lattice
$(\Z^*)^2$. It is important to note for the following that $a(\cdot)$
is $C^\infty$ except at $\theta=j\pi/2, j=0,\dots,3$ where it is only 
continuous and its first derivative has a jump: indeed,
$a(\theta)\sim 1/2-|\theta-i\pi/2|$ for $\theta$ close to $i\pi/2,
i=0,\dots,3$. 

\subsection{Results}
\subsubsection{Convex initial droplet}
\label{sec:cib}

We prove the anisotropic curve shortening conjecture in the case where
the initial droplet is convex and suitably smooth.  
Given a strictly convex smooth domain  $\DD$ in $\bbR^2$ and letting $\gamma=\partial\DD$ be its boundary, we parameterize it
following a standard convention of convex geometry
(cf.\ e.g.\ \cite{GL2} and Figure \ref{fig:suppfunc}). 
For $\theta\in[0,2\pi]$ let ${ v}(\theta)$ be the unit vector forming an anticlockwise angle
$\theta$ with the horizontal axis
and let
\begin{eqnarray}
h(\theta)=\sup\{x\cdot v(\theta), x\in\gamma\}
\end{eqnarray}
with $\cdot$ the usual scalar product in $\bbR^2$.

Sometimes, we abusively say that $\gamma$ is a convex curve if 
the domain $\DD$ it encloses is convex, and we identify $\gamma$ with $\DD$.

 \begin{figure}[hlt]
\leavevmode
\epsfysize =5.8 cm
\psfragscanon 
\psfrag{theta}{\small $\theta$}
\psfrag{O}{\small $\bf{0}$}
\psfrag{x}{\small $x(\theta)$}
\psfrag{N}{}
\psfrag{k}{\small $k(\theta)$}
\psfrag{V}{\small $v(\theta)$}
\psfrag{h}{\small $h(\theta)$}
\epsfbox{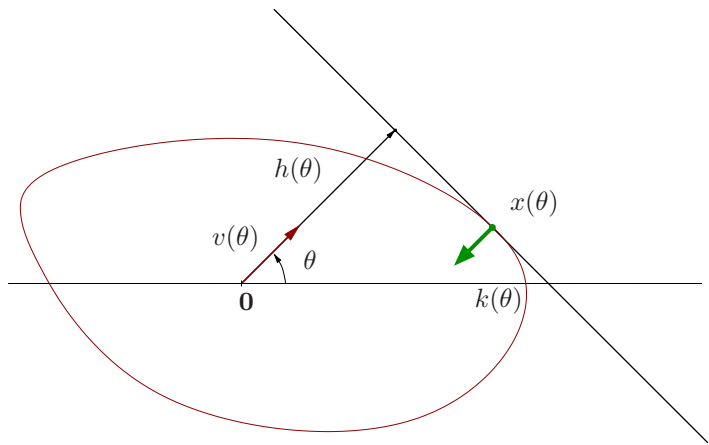}
\begin{center}
\caption{
 A graphic description of the support function $h$.
Given $\theta$, consider the point $x(\theta)$ of $\gamma$ that maximizes 
$x\cdot v(\theta)$ (it is unique if the curve is strictly convex).
Then $h(\theta)=x(\theta)\cdot v(\theta)$, and $k(\theta)$ is the norm of the curvature vector of $\gamma$ (bold vector)
 at  $x(\theta)$. If the tangent to $\gamma$ at $x$ exists it is normal to $v(\theta)$ and $|h(\theta)|$ 
is the distance between the tangent and the origin.
We emphasize that this construction works equally well when the origin is not 
inside $\gamma$. } 
\label{fig:suppfunc}
\end{center}
\end{figure}

The function $\theta\mapsto h(\theta)$ (called ``the support function'') 
uniquely determines $\gamma$:
\begin{eqnarray}
  \DD=\cap_{0\le \theta\le 2\pi}\{x\in\bbR^2:x\cdot v(\theta)\le h(\theta)\}.
\end{eqnarray}

With this parameterization, the anisotropic curve shortening evolution reads
\begin{eqnarray}
  \label{eq:meanc}
  \begin{cases}
\partial_t h(\theta,t)=- a(\theta)k(\theta,t)      \\
h(\theta,0)=h(\theta)
  \end{cases}
\end{eqnarray}
where, for a convex curve $\gamma$,
$k(\theta)\ge0$ is the curvature at the point $x(\theta)\in\gamma$ 
where the outward normal forms  an anticlockwise angle
$\theta$ with the horizontal axis and
the $t$-derivative is taken at constant $\theta$ (see \cite[Lemma 2.1]{GL2} for a proof of \eqref{eq:meanc}). 
Of course $h(\cdot)$ is the support function of $\partial \mathcal D$.

In general,
even proving the existence of a solution of \eqref{eq:meanc} with 
$a(\cdot)$ given in \eqref{eq:a}
is non-trivial,
since $a(\cdot)$ has points of non-differentiability and the existing
literature (e.g.\ \cite{GL1, GL2}) usually assumes that $a(\cdot)$ is at least $C^2$. 

\medskip

Our first result is
\begin{theorem}
\label{th:deterministico}
Let $\mathcal D\subset[-1,1]^2$ be strictly convex and assume that its boundary
$\gamma=\partial \mathcal D$ is a curve whose curvature $[0,2\pi]\ni
\theta\mapsto k(\theta)$ defines a positive, $2\pi$-periodic,
Lipschitz function. Then there exists a unique flow of convex curves
$(\gamma(t))_t$ with curvature defined everywhere, such that $\gamma(0)=\gamma$ and that the corresponding
support function $h(\theta,t)$ solves \eqref{eq:meanc} for $t\ge0$ and
satisfies the correct initial condition $h(\theta,0)=h(\theta)$. The
curve $\gamma(t)$ shrinks to a point ${\bf x}_f\in\bbR^2$ at time
$t_f=Area(\mathcal D)/2$. For $t<t_f$, $\gamma(t)$ is a smooth curve
in the following sense: its curvature function $k(\cdot,t)$ is
Lipschitz and bounded away from $0$ and infinity on any compact subset
of $[0,t_f)$.  
\end{theorem}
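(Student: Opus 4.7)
My plan is to bypass the non-smoothness of the anisotropy $a$ at $\theta=j\pi/2$ by approximating it by a family $a_\varepsilon\in C^\infty(\mathbb{R}/2\pi\mathbb{Z})$ obtained by standard mollification, so that $a_\varepsilon\to a$ uniformly, $a_\varepsilon$ stays uniformly bounded below by a positive constant, and $\mathrm{Lip}(a_\varepsilon)\le\mathrm{Lip}(a)$. Since $a_\varepsilon$ is smooth and $k(\cdot,0)$ is positive and Lipschitz, the Gage--Li theory \cite{GL1,GL2} provides, for every $\varepsilon>0$, a unique smooth flow of strictly convex curves $\gamma_\varepsilon(t)$ solving \eqref{eq:meanc} with $a_\varepsilon$ in place of $a$ and initial datum $\gamma$, up to an extinction time $t_{f,\varepsilon}$ at which the curve shrinks to a point. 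The evolution of the area is $\frac{d}{dt}Area_\varepsilon(t)=-\int_0^{2\pi}a_\varepsilon(\theta)\,d\theta$, and since $\int a_\varepsilon\to\int a$ this already yields $t_{f,\varepsilon}\to t_f=Area(\mathcal D)/2$.

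\textbf{Uniform estimates: the main obstacle.} The core of the argument is to prove estimates on $(h_\varepsilon,k_\varepsilon)$ which are uniform in $\varepsilon$ and locally uniform in $t$ on every $[0,T]\subset[0,t_f)$. Two-sided bounds $c(T)\le k_\varepsilon(\theta,t)\le C(T)$ follow from the maximum principle applied to the Gage--Li curvature equation
\begin{equation*}
\partial_t k_\varepsilon \;=\; k_\varepsilon^2\,\bigl[\,(a_\varepsilon k_\varepsilon)_{\theta\theta}+a_\varepsilon k_\varepsilon\,\bigr],
\end{equation*}
using that $a_\varepsilon$ is uniformly pinched in a compact subset of $(0,\infty)$ and that the initial curvature has the same property. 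The hard part is the uniform Lipschitz bound on $k_\varepsilon$ in $\theta$, since naively differentiating the equation produces a term in $\partial_\theta a_\varepsilon$ whose $L^\infty$ norm is controlled by $\mathrm{Lip}(a)$ but whose derivative blows up with $\varepsilon$. To avoid this, I propose to study the normal velocity $w_\varepsilon:=a_\varepsilon k_\varepsilon$ rather than $k_\varepsilon$ itself: by adapting the Bernstein/maximum-principle strategy of \cite{GL2} to a quantity of the form $(\partial_\theta w_\varepsilon)^2+\lambda w_\varepsilon^2$, one should obtain a bound on $\|\partial_\theta w_\varepsilon(\cdot,t)\|_\infty$ that depends only on $\|a\|_{C^{0,1}}$, on the initial data and on $t_f-T$. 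Dividing by the uniformly positive Lipschitz function $a_\varepsilon$ then translates this into the desired Lipschitz bound on $k_\varepsilon(\cdot,t)$, and continuity in $t$ follows directly from the PDE.

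\textbf{Passage to the limit, uniqueness, extinction.} Once the uniform estimates are in hand, Arzel\`a--Ascoli extracts a subsequence $(h_{\varepsilon_n},k_{\varepsilon_n})$ converging uniformly on $[0,2\pi]\times[0,T]$ for every $T<t_f$ to a pair $(h,k)$ with the regularity claimed in the statement. Passing to the limit in the integrated equation $h_{\varepsilon_n}(\theta,t)-h_{\varepsilon_n}(\theta,0)=-\int_0^t a_{\varepsilon_n}(\theta)k_{\varepsilon_n}(\theta,s)\,ds$ requires only the uniform convergence of $a_{\varepsilon_n}$ and $k_{\varepsilon_n}$, so $h$ solves \eqref{eq:meanc} with the original $a$, and the strictly convex curve $\gamma(t)$ reconstructed from $h(\cdot,t)$ has curvature Lipschitz and bounded away from $0$ and $\infty$ on any $[0,T]\subset[0,t_f)$. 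Uniqueness follows from a comparison argument applied directly to the degenerate parabolic equation $\partial_t h=-a(\theta)/(h+h_{\theta\theta})$: the a priori two-sided bound on $h+h_{\theta\theta}$ linearizes the difference $u=h^{(1)}-h^{(2)}$ of two classical solutions with the same initial datum into a linear parabolic inequality with bounded coefficients, forcing $u\equiv 0$ by the maximum principle. Finally, the extinction time $t_f=Area(\mathcal D)/2$ and the fact that $\gamma(t)$ shrinks to a single point are read off from the area evolution $\frac{d}{dt}Area(t)=-\int a(\theta)\,d\theta=-2$ combined with the uniform lower bound on the curvature, which prevents the curve from losing convexity before collapsing.
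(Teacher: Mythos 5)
Your proposal follows essentially the same blueprint as the paper: regularize the anisotropy $a$, invoke Gage--Li for the regularized flows, derive $\varepsilon$-uniform two-sided curvature bounds and a uniform Lipschitz bound on the normal velocity $w_\varepsilon=a_\varepsilon k_\varepsilon$ via Bernstein/maximum-principle estimates, and pass to the limit. Your observation that the Bernstein argument should be run on $w_\varepsilon$ rather than on $k_\varepsilon$ is exactly the paper's manoeuvre with the quantity $g=\tilde a\tilde k$ in Proposition \ref{prop:curv}.

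The genuine gap is in the passage to the limit and in the last sentence of your second paragraph. You assert that ``continuity in $t$ follows directly from the PDE'' and then invoke Arzel\`a--Ascoli to get uniform convergence of $(h_{\varepsilon_n},k_{\varepsilon_n})$ on $[0,2\pi]\times[0,T]$. But the curvature equation $\partial_t k_\varepsilon=k_\varepsilon^2\bigl[(a_\varepsilon k_\varepsilon)_{\theta\theta}+a_\varepsilon k_\varepsilon\bigr]$ involves $\partial^2_\theta(a_\varepsilon k_\varepsilon)$, and this quantity, unlike the first derivative, is \emph{not} bounded uniformly in $\varepsilon$: controlling it requires $\|\partial^2_\theta a_\varepsilon\|_\infty$, which blows up as $\varepsilon\to 0$ because $a$ has a genuine corner at $\theta=j\pi/2$. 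Consequently $\partial_t k_\varepsilon$ is not equibounded, the family $(k_\varepsilon)$ is not equicontinuous in $t$, and Arzel\`a--Ascoli in the $t$-variable does not apply. What one can extract, using equicontinuity in $\theta$ only, is a pointwise-in-$t$, uniform-in-$\theta$ limit $k(\theta,t)$ and the \emph{integrated} relation $h(\theta,t)-h(\theta,s)=-\int_s^t a(\theta)k(\theta,u)\,\dd u$; to upgrade to a classical solution you still have to prove continuity of $t\mapsto k(\theta,t)$. This is precisely where the paper has to work: it establishes an $\varepsilon$-dependent time modulus $|\partial^2_t h_\varepsilon|\le c(\varepsilon)$ with $c(\varepsilon)\uparrow\infty$ as $\varepsilon\to 0$ (Proposition \ref{youhou}), combines it with a rate of convergence $\|k_\varepsilon(\cdot,t)-k(\cdot,t)\|_\infty\le\varepsilon'(\varepsilon)$, and minimizes $2\varepsilon'(\varepsilon)+c(\varepsilon)|t-s|$ over $\varepsilon$ to obtain continuity of the limit (see \eqref{unifcon}). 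Your write-up is silent on this interpolation, and without it the proof that the limit is a \emph{classical} solution is incomplete.

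A secondary point: Gage--Li's short-time existence and smoothing theory assumes the initial curve is at least $C^2$ (in practice $C^\infty$), whereas your $k(\cdot,0)$ is only Lipschitz. The paper therefore regularizes the initial curve $\gamma$ into a monotone-in-$w$ family $\gamma^{(w)}$ as well (Assumption \ref{ass:h}); the monotonicity also pins down the Hausdorff limit without sub-subsequences. You should build this into your scheme rather than feeding Lipschitz data directly into Gage--Li.
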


We let $\mathcal D(t)$ denote the convex closed set enclosed by $\gamma(t)$  (of
course, $\mathcal D(t=0)=\mathcal D$). Also, we use the convention
that $\mathcal D(t)=\{{\bf x}_f\}$ if
$t\ge t_f$.

\medskip
For $\delta>0$ let $B(x,\delta)$ denote the ball of radius $\delta$ centered at $x$
and 
for any compact set $\mathcal C\subset \bbR^2$ define
\begin{equation}\begin{split}
\label{eq:cdelta}
 \mathcal C^{(\delta)}:=\bigcup_{x\in \mathcal C} B(x,\gd),\quad\quad
 \mathcal C^{(-\delta)}:=\left(\bigcup_{x\notin \mathcal C} B(x,\gd)\right)^c.
\end{split}\end{equation}
Note that $\mathcal D(t)^{(\delta)}=B({\bf x}_f,\delta)$ and $\mathcal
D(t)^{(-\delta)}=\emptyset$ if $t\ge t_f$.

An event $B_L$ is said to occur \textit{with
  high probability} (w.h.p.)  if $\lim_{L\to \infty} P(B_L)=1$.

\medskip

\begin{theorem}
\label{th:convex}
Under the same assumptions on $\mathcal D$ as in Theorem \ref{th:deterministico}, for any $\delta>0$ one has w.h.p.
\begin{gather}\label{eq:scaling}
  \mathcal D^{(-\delta)}(t)\subset \frac1L \mal( L^2 t)\subset  \mathcal D^{(\delta)}(t)
\quad\quad \text{for every}\quad 0\le t\le t_f+\delta\\
\mal(L^2 t)=\emptyset \quad\quad \text{for every}\quad t>t_f+\delta.
\end{gather}
In particular,
 one has the following convergence in probability:
\begin{equation}
\label{eq:drif}
  \lim_{L\to\infty} \frac{\tau_+}{L^2 Area( \mathcal D)}=\frac{1}{2}.
\end{equation}
\end{theorem}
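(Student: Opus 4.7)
The plan is to prove Theorem \ref{th:convex} by a classical "sandwiching" scheme that exploits monotonicity (attractiveness) of the zero-temperature heat-bath dynamics together with the regularity of the limiting flow provided by Theorem \ref{th:deterministico}. The dynamics preserves stochastic ordering: if $\sigma(0)\le\sigma'(0)$ pointwise then we can couple $\sigma(t)\le\sigma'(t)$ for all $t\ge 0$. Consequently, it suffices to build two deterministic families of initial "barrier" configurations, one whose $-$ droplet slightly contains $L\mathcal D$ and one slightly contained in $L\mathcal D$, and to prove that under the rescaled dynamics they track the curve-shortening flow from the outside and the inside respectively, up to error $o(1)$ in probability.

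First I would discretize time into a mesh of spacing $\epsilon>0$ and approximate the curve $\gamma(t)$ at each discrete time by finitely many arcs on which, thanks to the Lipschitz character of $k(\cdot,t)$ given by Theorem \ref{th:deterministico}, the curvature is almost constant. On each arc I would carry out a local analysis of the interface motion. Away from the lattice directions $\theta=j\pi/2$ the interface is locally the graph of a height function with bounded slope, and the zero-temperature dynamics restricted to the interface is precisely a symmetric simple exclusion process; the hydrodynamic limit of SSEP then furnishes the right local normal velocity, and the anisotropy factor $a(\theta)=1/[2(|\cos\theta|+|\sin\theta|)^2]$ emerges as the Jacobian converting density-gradient of SSEP to curvature of the curve. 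Near $\theta=j\pi/2$, i.e.\ at the "flat facets" where $a$ is only continuous, the interface locally is a staircase and the dynamics maps to a one-dimensional zero-range-type process, whose hydrodynamic limit provides the velocity consistent with the boundary behavior of $a$ and with the singular derivative there. Both local limits are well known (see \cite{cf:KL,cf:Spohn}) for the interface dynamics on an infinite line or in an infinite wedge.

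Starting from the outer barrier at time $s$, whose associated region contains $L\mathcal D(s)^{(\epsilon)}$, I would invoke monotonicity to replace the true configuration by local "straight-line" or "wedge" configurations on each arc, apply the corresponding hydrodynamic limit over a time window of length $\epsilon L^2$, and conclude that at time $s+\epsilon$ the droplet is contained in $L\mathcal D(s+\epsilon)^{(\epsilon+o(\epsilon))}$; iterating $O(1/\epsilon)$ times yields the upper inclusion in \eqref{eq:scaling} up to time $t_f-\delta$. A symmetric argument with inner barriers gives the lower inclusion. For the final window $[t_f-\delta,t_f+\delta]$, once the droplet has shrunk to a set of linear size $O(L\sqrt\delta)$, a soft monotone domination by a circumscribing rectangle (whose extinction time is known to be of order $L^2$ times its area divided by $2$) yields $\mathcal A_L(L^2 t)=\emptyset$ for $t>t_f+\delta$. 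The Lifshitz law \eqref{eq:drif} is then an immediate consequence, since $\{\tau_+>L^2(t_f+\delta)\}\subset\{\mathcal A_L(L^2(t_f+\delta))\neq\emptyset\}$ and $\{\tau_+<L^2(t_f-\delta)\}\subset\{\mathcal A_L(L^2(t_f-\delta))\not\supset L\mathcal D^{(-\delta)}(t_f-\delta)\}$.

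The main obstacle will be that the $-$ droplet is \emph{not} globally, and in fact not even locally, the graph of a function: "illegal" updates that pinch off the droplet (Figure \ref{fig:interditmove}) prevent the exact SSEP mapping and are precisely what distinguishes genuine Ising dynamics from the effective interface model Spohn analyzed. Controlling these events requires comparing, via stochastic domination, the true dynamics to auxiliary interface models in which such splittings are either suppressed or turned into monotone "deletion" rules, and verifying that the discrepancy contributes at most $o(\epsilon)$ per step so that the errors do not accumulate over the $O(1/\epsilon)$ iterations. A secondary technical challenge is the matching between the SSEP regime and the zero-range regime at the four corner angles where $a'$ jumps; handling this cleanly is where the Lipschitz regularity of $k(\cdot,t)$ from Theorem \ref{th:deterministico} and the uniform-in-regularization estimates built into its proof will be crucial.
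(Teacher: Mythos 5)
Your proposal reproduces the paper's strategy faithfully: sandwiching by outer/inner barriers using attractiveness, a time-step iteration of length $\varepsilon L^2$ driven by the Markov property, local reduction to SSEP away from the poles and to a two-species zero-range process at the poles, monotone auxiliary dynamics to tame the pinching-off moves, and extinction by domination once the droplet is $O(\sqrt\delta L)$. This is essentially the scheme implemented via Propositions~\ref{mainpropc} and~\ref{trucrelouc} (the general-convex analogues of Propositions~\ref{mainprop} and~\ref{trucrelou}).

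Two points where your description diverges from what is actually needed: first, the hydrodynamic-limit inputs are \emph{not} off-the-shelf — the paper has to prove Theorem~\ref{mickey} (uniform-in-time $L^\infty$ control for SSEP with fixed endpoints) and Theorem~\ref{spo} (zero-range with two signed species, non-periodic) essentially from scratch, and these proofs occupy Section~\ref{sec:potm} and Appendix~\ref{sec:pots}; references such as \cite{cf:KL,cf:Spohn} contain only heuristic or weaker versions. Second, the decomposition the paper uses is not a fine mesh of arcs of nearly constant curvature but a coarse $8$-piece cover (one ``SSEP'' piece per quadrant plus one ``zero-range'' piece per pole), with the varying curvature absorbed by the heat equation on each whole piece; the Lipschitz regularity of $k(\cdot,t)$ enters not to make curvature locally constant but to ensure that $\partial_t g$ (the time-derivative of the local profile) is uniformly continuous in time, cf.\ Lemma~\ref{lem:contilaplacien} and Lemma~\ref{lem:contidrift}.
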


  The reason why in Theorems \ref{th:deterministico} and \ref{th:convex}
we do not content ourselves with, say,  initial
$C^\infty$ curves 
 is that, as we see in next section, there is a very natural initial condition
whose curvature function is only Lipschitz  and not $C^1$ (and stays
so at later times).

Theorem \ref{th:convex} does not apply directly if one considers $\mathcal
D=[0,1]^2$ or any other non-smooth or non-strictly convex convex set.  However, approximating
$\mathcal D$ from above and below by smooth compact sets and using
monotonicity (cf.\ Section \ref{sub:gcm}), one sees easily that \eqref{eq:drif} holds in any
case. In particular, the disappearance time of an $L\times L$ square
droplet is with high probability $L^2/2(1+o(1))$.

Theorems \ref{th:convex} and \ref{mainres} tell us that for our choices of initial configuration, 
the disappearance time of the minus droplet is non-random at first
order. This implies that the variation distance of our Markov Chain from equilibrium
(which is concentrated on the all-plus configuration) drops abruptly from $1$
to $0$ around time $L^2\,t_f$ within a time-window of width 
$o(L^2)\ll L^2\,t_f $  (we conjecture that the correct order of the window should be $O(L^{3/2})$). 
This is a particular instance of a phenomenon called cut-off
(cf.\ \cite{DS81} and \cite{LPW}).

\subsubsection{Scale-invariant droplet}

A particular case of Theorem \ref{th:convex} is that where the initial
condition is scale invariant, \textit{i.e.}\ when the limiting
evolution $(\gamma(t))_t$ is a homothetic contraction.  Consider the
function
\begin{equation}
\label{eq:soleqdif1}
f_0: \left[-\frac{1}{\sqrt{2}} , +\frac{1}{\sqrt{2}} \right]\ni x\mapsto
f_0(x)=\beta\left\{4\alpha 
x\int_0^xe^{2\alpha t^2}dt-e^{2\alpha x^2}\right\},
\end{equation}
where $\alpha$ is the unique positive solution of
\begin{equation}
\label{eq:alpha}
4\sqrt{2}\alpha e^{-\alpha}\int_0^{1/\sqrt{2}}e^{2\alpha t^2}dt=1
\end{equation}
and
\begin{eqnarray}
  \label{eq:beta}
  \beta=-\sqrt{2}e^{-\alpha}<0.
\end{eqnarray}
Note that $f_0$ is $\mathcal{C}^\infty$, positive, concave, symmetric
around $0$ and increasing on $[-\frac{1}{\sqrt{2}},0]$.  We denote by
$(\be_1,\be_2)$ the canonical basis of $\R^2$ and
$(\bff_1,\bff_2)=(\frac{\be_1-\be_2}{\sqrt{2}},\frac{\be_1+\be_2}{\sqrt{2}})$
the image of $(\be_1,\be_2)$ by the rotation of angle $-\pi/4$. We
also define the curve $\gamma_1$ to be the graph of $f_0$ in the
coordinate system $(\bof_1,\bof_2)$, i.e.
\begin{equation}
  \gamma_1:=\left\{ x\bof_1+ f_0(x)\bof_2 \ \big| \ x\in \left[-\frac{1}{\sqrt{2}},\frac{1}{\sqrt{2}}\right] \right\} .
\end{equation}
 If $S_1$ (resp. $S_2$) denotes the symmetry with respect to the axis 
$\be_1$ (resp. $\be_2$)
one defines the closed curve $\gamma$ by 
\begin{equation}
\label{eq:C}
\gamma=\gamma_1\cup (S_1\gamma_1)\cup (S_2 \gamma_1)\cup ((S_1\circ S_2) \gamma_1).
\end{equation}
In the sequel $ \mathscr D$ denotes the compact, convex set enclosed
in $\gamma$, see Figure \ref{fig:Dessin2}.  

\begin{figure}[hlt]

\leavevmode
\epsfxsize =8 cm
\psfragscanon 

\psfrag{-a}[c]{$-\frac{1}{\sqrt{2}}$}
\psfrag{+a}[c]{$\frac{1}{\sqrt{2}}$}
\psfrag{b}[c]{$\frac{1}{\sqrt{2}}$}
\psfrag{x}[c]{$x$}
\psfrag{f(x)}[c]{$f_0(x)$}
\psfrag{e1}{$\be_1$}
\psfrag{e2}{$\be_2$}
\psfrag{f1}{$\bof_1$}
\psfrag{f2}{$\bof_2$}
\psfrag{gamma}{$\gamma$}
\psfrag{gamma1}{$\gamma_1$}

\psfrag{D}{$\mathscr{D}$}

\epsfbox{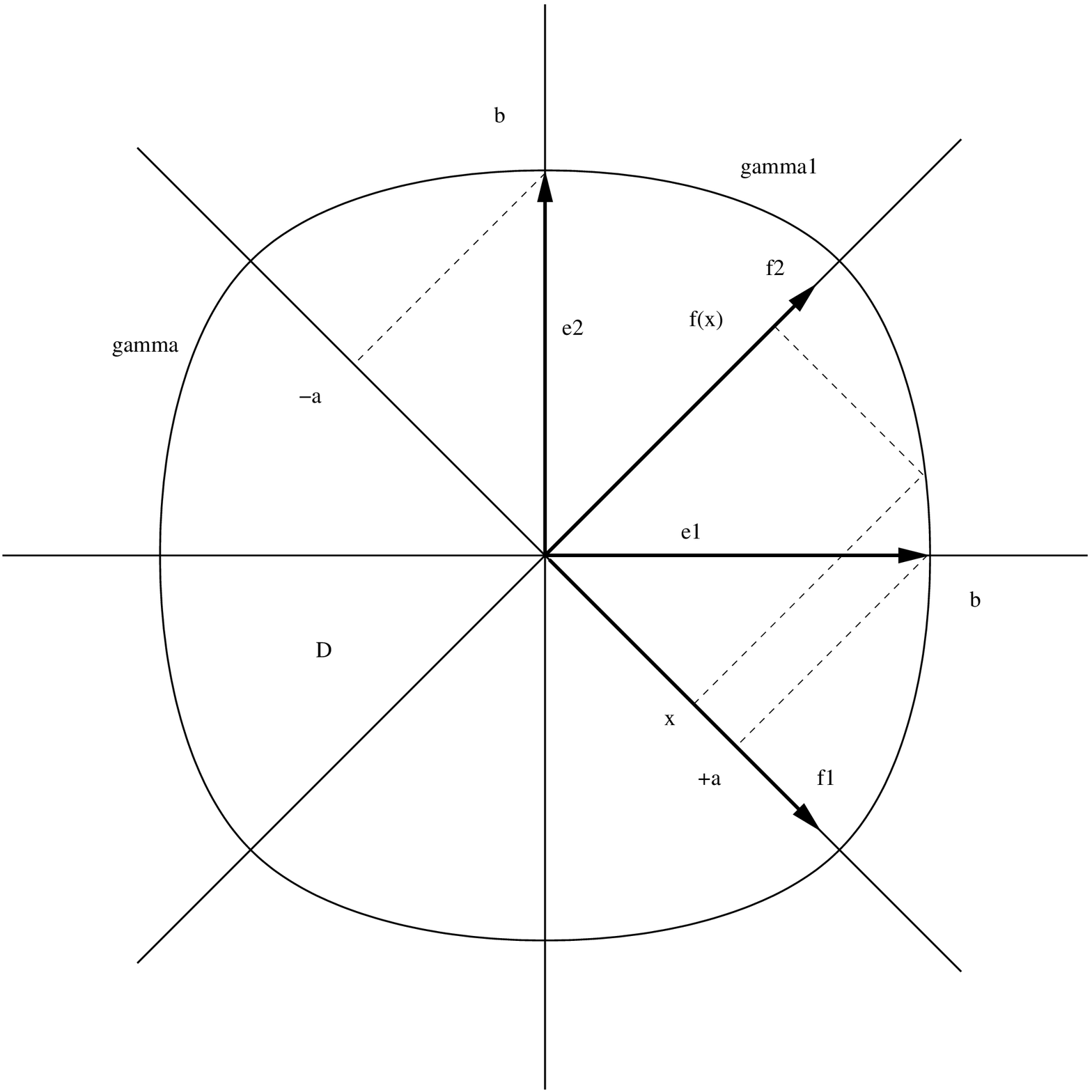}
\begin{center}
\caption{
The curve $\gamma=\partial\DDD$ and the coordinate systems $(\be_1,\be_2)$ and $(\bff_1,\bff_2)$.
}
\label{fig:Dessin2}
\end{center}
\end{figure}

One can check  that the  curvature function $\theta\mapsto
k(\theta)$ of $\partial\mathscr D$ is Lipschitz and bounded away from zero, 
but not differentiable at $\theta=i\pi/2, i=0,1,2,3$.
In this case, Theorem \ref{th:convex} can be formulated as follows.
\begin{theorem}\label{mainres}
Assume that $\mathcal D=\mathscr D$.
For any $\eta>0$, w.h.p,
\begin{equation}
\label{eq:theorem}
  (\sqrt{1-2\alpha t}-\eta)\mathscr D\subset \frac1L \mal(tL^2)\subset (\sqrt{1-2\alpha t}+\eta)\mathscr D \quad \mbox{for every}\quad t\ge0
\end{equation}
where we work with the convention that $\sqrt{x}=0$ for $x\le 0$ and that $x\mathscr D=\emptyset$ for $x<0$.
Moreover, one has the following convergence in probability:
\begin{equation}
\label{eq:driftbis}
  \lim_{L\to\infty} \frac{\tau_+}{Area( L \mathscr D)}=\alpha\lim_{L\to\infty} \frac{\tau_+}{L^2}=\frac{1}{2}.
\end{equation}

\end{theorem}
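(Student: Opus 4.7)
The plan is to derive Theorem \ref{mainres} as a direct corollary of Theorems \ref{th:deterministico} and \ref{th:convex} by exhibiting the unique solution of \eqref{eq:meanc} starting from $\gamma=\partial\mathscr{D}$ explicitly as the family of homothetic contractions $\gamma(t) = \sqrt{1-2\alpha t}\,\gamma$ for $0 \le t \le 1/(2\alpha)$. Once this is established, setting $\mathcal{D} = \mathscr{D}$ in Theorem \ref{th:convex} gives $\mathcal{D}(t) = \sqrt{1-2\alpha t}\,\mathscr{D}$ with extinction time $t_f = 1/(2\alpha)$; then \eqref{eq:theorem} follows from \eqref{eq:scaling} by bracketing the tubular neighbourhoods $\mathscr{D}^{(\pm\delta)}(t)$ between rescaled copies $(\sqrt{1-2\alpha t}\pm\eta)\mathscr{D}$, while \eqref{eq:driftbis} follows from $t_f = \mathrm{Area}(\mathscr{D})/2$, i.e.\ $\mathrm{Area}(\mathscr{D}) = 1/\alpha$, which itself comes from the explicit computation $\int_0^{2\pi}a(\theta)\,d\theta = 2$.

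The first task is to check that $\gamma$ satisfies the hypotheses of Theorems \ref{th:deterministico} and \ref{th:convex}. From \eqref{eq:soleqdif1} one reads off $f_0'(x) = 4\alpha\beta\int_0^x e^{2\alpha t^2}dt$ and $f_0''(x) = 4\alpha\beta e^{2\alpha x^2}$; the defining relations \eqref{eq:alpha} and \eqref{eq:beta} then give $f_0(1/\sqrt{2}) = 1/\sqrt{2}$ and $f_0'(1/\sqrt{2}) = -1$, which guarantees that the four arcs $\gamma_1, S_1\gamma_1, S_2\gamma_1, (S_1\circ S_2)\gamma_1$ meet with continuous tangents at the four corner points $\pm\be_1, \pm\be_2$. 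Since $\beta<0$ one has $f_0''<0$ throughout, hence strict concavity of $f_0$ and strict convexity of each arc, which together with the tangent matching yields strict convexity of $\gamma$ as a whole. The curvature function $k(\theta)$ is $C^\infty$ on each of the four open subarcs corresponding to $\theta\in(i\pi/2,(i+1)\pi/2)$, and continuous and bounded away from $0$ across the corner angles $\theta = i\pi/2$, hence globally Lipschitz.

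The second task is to verify that $\sqrt{1-2\alpha t}\,\gamma$ solves \eqref{eq:meanc}. If $h(\theta,t)=\lambda(t)h(\theta,0)$ then $k(\theta,t) = k(\theta,0)/\lambda(t)$ and \eqref{eq:meanc} reduces to $\lambda\dot\lambda = -a(\theta)k(\theta,0)/h(\theta,0)$, which admits a $\theta$-independent solution iff
\begin{equation*}
a(\theta)\,k(\theta,0) = \alpha\,h(\theta,0) \qquad \text{for all }\theta,
\end{equation*}
in which case $\lambda(t) = \sqrt{1-2\alpha t}$. On $\gamma_1$, parameterise by the abscissa $x$ in the rotated frame $(\bff_1,\bff_2)$: the outward normal at $(x, f_0(x))$ has angle $\tilde\theta = \theta + \pi/4$ with $f_0'(x) = -\cot\tilde\theta$, and one has $h = x\cos\tilde\theta + f_0(x)\sin\tilde\theta$, $k = -f_0''/(1+f_0'^2)^{3/2}$, and $a(\theta) = 1/(4\sin^2\tilde\theta)$ for $\theta\in[0,\pi/2]$. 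A short algebraic simplification reduces the identity $ak = \alpha h$ to the linear ODE
\begin{equation*}
f_0''(x) - 4\alpha\,x\,f_0'(x) + 4\alpha\,f_0(x) = 0,
\end{equation*}
which is satisfied by \eqref{eq:soleqdif1}, as a direct substitution using the expressions for $f_0'$ and $f_0''$ above shows. The discrete symmetries of $\gamma$ together with $a(-\theta)=a(\theta)=a(\theta+\pi/2)$ propagate the identity to all of $\partial\mathscr{D}$, and the uniqueness part of Theorem \ref{th:deterministico} then identifies $\sqrt{1-2\alpha t}\,\gamma$ with the anisotropic curve-shortening flow.

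The proof contains no genuinely hard step beyond the content of Theorems \ref{th:deterministico} and \ref{th:convex}; the ODE verification is routine, and the only mildly delicate point is the comparison between tubular neighbourhoods and homothetic copies near the extinction time, handled by noting that the four-fold symmetry of $\mathscr{D}$ forces $\mathbf{x}_f=0$ and treating a small time window $[t_f - c\delta, t_f+\delta]$ separately, inside which both $\mathscr{D}^{(\delta)}(t)$ and $(\sqrt{1-2\alpha t}+\eta)\mathscr{D}$ reduce to small balls around the origin.
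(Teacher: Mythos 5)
Your proposal takes a genuinely different (and logically valid) route from the one the paper follows. The paper states explicitly at the start of its ``Organization'' subsection that it chooses \emph{not} to derive Theorem~\ref{mainres} as a corollary of Theorem~\ref{th:convex}: it proves Theorem~\ref{mainres} first and directly, via two dedicated propositions (Propositions~\ref{mainprop} and~\ref{trucrelou}) and a telescoping iteration over infinitesimal time increments $\gep L^2$, exploiting the exact self-similarity so that after each step one is again comparing to a shrunken copy of $\DDD$. Only afterwards does it generalize those propositions to arbitrary convex~$\DD$ and obtain Theorem~\ref{th:convex}. Your route inverts this: you take Theorems~\ref{th:deterministico} and~\ref{th:convex} as given, exhibit the homothetic contraction $\sqrt{1-2\alpha t}\,\gamma$ as the classical curve-shortening flow from $\partial\DDD$, and read off \eqref{eq:theorem}--\eqref{eq:driftbis}. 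This is exactly the ``obvious'' derivation the paper forgoes for expository reasons, and the verification you supply is correct: the reduction of $a(\theta)k(\theta)=\alpha h(\theta)$ to $f_0''-4\alpha x f_0'+4\alpha f_0=0$ via $a=1/(4\sin^2\tilde\theta)$, $f_0'=-\cot\tilde\theta$, $1+f_0'^2=\sin^{-2}\tilde\theta$, $h=(f_0-xf_0')\sin\tilde\theta$ checks out, and \eqref{eq:soleqdif1} satisfies that ODE. What your approach buys is brevity and conceptual clarity (Theorem~\ref{mainres} genuinely \emph{is} a corollary); what the paper's approach buys is pedagogical transparency, since the self-similar case isolates the probabilistic coupling machinery from the PDE technicalities of Section~\ref{sec:convexbu}.

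Two small points worth tightening. First, your deduction ``$C^\infty$ on each open subarc plus continuous across the corner angles $\theta=i\pi/2$ implies globally Lipschitz'' is not automatic (think of $|x|^{1/2}$); what you actually need, and what holds here, is that the one-sided $\theta$-derivatives of $k$ stay bounded as $\theta\to(i\pi/2)^\pm$. This follows because $f_0$, $f_0'$, $f_0''$ extend smoothly to the closed interval $[-1/\sqrt{2},1/\sqrt{2}]$ and $\sin\tilde\theta\ge1/\sqrt{2}$ there, so $k$ and $\partial_\theta k$ extend continuously to each closed quarter-arc. Second, a structural remark (which you could not have seen from the statements alone): in the paper's own proof of Theorem~\ref{th:convex}, the lower bound \eqref{topc} of Proposition~\ref{trucrelouc} is established by covering $\DD^{(-\alpha/2)}$ with small translated/shrunk copies of the invariant shape $\DDD$ and invoking Proposition~\ref{trucrelou}, which is proved within the scale-invariant analysis. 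So in the paper's logical architecture a nontrivial piece of the scale-invariant case must be developed \emph{before} Theorem~\ref{th:convex} can be proved; your corollary-style derivation of Theorem~\ref{mainres} is sound as a logical reduction to Theorems~\ref{th:deterministico} and~\ref{th:convex}, but it should not be read as making the analysis of $\DDD$ dispensable.
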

It is easy to check, using Lemma \ref{lemma:soleqdif1} below
and a couple of integrations by parts, that
$Area(\mathscr D)=1/\alpha$, yielding the first equality in
\eqref{eq:driftbis}.
The expression \eqref{eq:soleqdif1} for the invariant shape appears also, although with different notations, in the recent work \cite{Krap}.

\subsection{Graphical construction of the dynamics and monotonicity}
\label{sub:gcm}
Before starting the proofs, we wish to give a construction of the
 Markov process  (called sometimes the \emph{graphical construction}) that
yields nice monotonicity properties.  We consider a family of
independent Poisson clock processes $(\tau^{x})_{x\in(\Z^*)^2}$. More
precisely, to each site $x\in (\bbZ^*)^2$ one associates a random
sequence (independently from other sites) of times
$(\tau^x_{n})_{n\ge0}$, that are such that $\tau^x_{0}=0$ and
$(\tau^x_{n+1}-\tau^x_{n})_{n\ge 0}$ are IID exponential variables
with mean one.  One also defines random variables $(U_{n,x})_{n\ge 0},
x\in (\bbZ^*)^2$ that are IID Bernoulli variables of parameter $1/2$,
with values $\pm 1$.

\medskip

Then given an initial configuration $\xi\in \{-1,1\}^{(\bbZ^*)^2}$
one constructs the dynamics $\sigma^{\xi}(t)$ starting from
$\sigma^\xi(0)=\xi$ as follows
\begin{itemize}
 \item $(\sigma_x(t))_{t\ge 0}$ is constant on the intervals of the type $[\tau^x_{n},\tau^x_{n+1})$. 
 \item $\sigma_x(\tau^x_{n})$ is chosen to be equal to $\pm1$ if a
   strict majority of the neighbors of $x$ satisfies
   $\sigma_y(\tau^x_{n})=\pm1$, and $U_{n,x}$ otherwise (this definition makes sense as, almost surely, two neighbors will not update at the same time.)
\end{itemize}

This construction gives a simple way to define simultaneously the
dynamics for all initial conditions (we denote by $P$ the associated
probability). Moreover this construction preserves the natural order
on $\{-1,+1\}^{(\bbZ^*)^2}$, given by
\begin{equation}
 \xi\ge \xi' \Leftrightarrow \xi_x\ge \xi'_x \;\;\text{for every\;\;} x \in (\bbZ^*)^2
\end{equation}
(this order is just the opposite of the inclusion order for the set of "$-$" 
spins, which is therefore also preserved).
Indeed, if $\xi\ge \xi'$, with the above construction, one has $P$-a.s.
\begin{equation}
\forall t>0\;\;\;\;\;\;\; \sigma^{\xi}(t)\ge \sigma^{\xi'}(t).
\end{equation}

\section{Local interface dynamics}
\label{sub:dlm}
One problem one has to deal with when proving mean curvature motion
for the whole droplet is that even though initially the interface
between "$+$" and "$-$" (i.e.\ the geometric boundary of the set $\mal(0)$)
is a simple curve, it can later split to form several loops.
In fact, as a byproduct of our results, we will obtain that, with
large probability, only very small extra loops can be created. We will
tackle this problem by introducing some auxiliary dynamics that do not
allow creation of new loops and stochastically compare to the original one.

A second problem is that the interface that one has to control is not
exactly the graph of function, for which it would be easier
to describe the macroscopic motion using partial differential
equations.  We begin by studying two dynamics for which the interface
is indeed a graph, and which have locally the same large-scale behavior as the
true evolution.  It is more natural to
introduce these dynamics as dynamics on interfaces rather than
dynamics on spins. Our task then will consist of glueing together the
``local results'' of Theorems \ref{mickey} and \ref{spo} to get 
 Theorems \ref{th:convex} and \ref{mainres}. 

\subsection{Local dynamics away from the poles and simple exclusion process}
\label{sub:ldap}
The first auxiliary dynamics is used to control the evolution of the
boundary of $(1/L)\mal(tL^2)$
away from the points  (the poles) where the tangent to the deterministic curve
$\gamma(t)$ is either horizontal or vertical. The evolution
near the poles will be analyzed via a second auxiliary dynamics, see
Section \ref{sub:dap}.

Given two positive natural numbers $M, N$ consider the state-space
$\gO_{M,N}$ of nearest-neighbor directed paths of length $L:=M+N$ with
$M$ steps up and $N$ steps down:
\begin{equation}
\gO_{M,N}=\big\{(h_x)_{x\in \{0,\ldots,M+N\}}\in \bbZ^{M+N+1} \ \big|  \  |h_{x+1}-h_x|=1, h_0=0; h_{M+N}=M-N \big\}.
\end{equation}

Given $h\in\Omega_{M,N}$ and $x\in\{1,\dots,L-1\}$, we denote by $h^{(x)}$ the path with a corner ``flipped'' at $x$ defined by 
$h^{(x)}_y=h_y$ for all $y\ne x$ and
\begin{equation}
\label{eq:defho}
h^{(x)}_x:=\begin{cases} h_x-2 \text{ if } h_{x\pm 1}=h_x-1,\\
h_x+2 \text{ if } h_{x\pm 1}=h_x+1,\\
h_x \text{ if } |h_{x+1}-h_{x-1}|=2.\\
\end{cases}
\end{equation}

The dynamics on $\Omega_{M,N}$ we consider is the one that flips every
corner with rate $1/2$.  More precisely it is the Markov chain whose
generator $\mathcal L$ is defined as
\begin{equation}\label{defL1}
  \mathcal L f (h):= \frac12\sum_{x= 1}^{L-1} (f(h^{(x)})-f(h)), \quad \forall f: \Omega_{M,N} \mapsto\bbR.
\end{equation}

We denote by $(h(t))_{t\ge 0}$ the trajectory of  the Markov chain started from
initial condition $h(0):=h^0\in \gO_{M,N}$.

\begin{rem}\rm\label{121} Note that this dynamics is in one-to-one correspondence with
  the Ising dynamics on a rectangle $N\times M$ with ``$+$'' boundary
  conditions on two adjacent sides and ``$-$'' boundary conditions on
  the two opposite sides, provided that the initial configuration is
  such that the length of the $-/+$ boundary is $M+N$ (i.e.\ the
  minimal possible length).  More precisely (see Figure
  \ref{fig:coresis2}) the correspondence is obtained by taking the
  graph of $h$, rotating it by $\pi/4$ and rescaling space by a factor
  $\sqrt{2}$ (so that squares have side-length one on the left-hand side
  picture). 
Note that we are implicitly identifying an
  element $h\in\Omega_{M,N}$ with a continuous function
  $F:[0,M+N]\mapsto \bbR$ such that $F(x)=h_x$ for $x=0,1,\dots,M+N$
  and $F(\cdot)$ is affine on intervals $(n,n+1)$ with integer $n$.
\end{rem}

 \begin{figure}[hlt]

\leavevmode
\epsfxsize =10 cm
\psfragscanon 

\psfrag{O}{\small $\bf{0}$}
\psfrag{M}{\small $M$}
\psfrag{N}{\small $N$}
\psfrag{M+N}{\small $M+N$}
\psfrag{M-N}{\small $M-N$}

\epsfbox{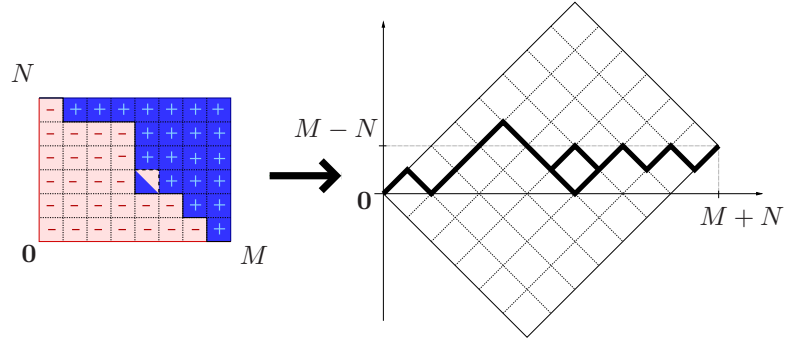}
\begin{center}
\caption{\label{fig:coresis2}
One-to-one correspondence between the dynamics in a rectangle with mixed 
boundary conditions and the corner-flip dynamics on paths. A possible spin update together with the 
equivalent corner-flip are represented}
\end{center}
\end{figure}

This corner-flip dynamics has been widely studied (see
e.g. \cite{Wilson}) and can be mapped to  the symmetric simple exclusion
process (SSEP) on a finite interval 
(just say that there is a particle at $x=0,\dots,
M+N-1$ if and only if $h_{x+1}-h_x=+1$, and check that dynamics in terms of particles coincides with that of SSEP). From hydrodynamic-limit results, it is
quite clear that the rescaled version of $h$ when $M,N$ tends to
infinity should satisfy the heat equation (see \cite[Section 4.2.]{cf:KL} for an account on hydrodynamic equations for the exclusion process).  However, we have
not found  in the literature a proof of the following precise
statement we need (we give a concise proof of it in Section \ref{sec:potm}):
\begin{theorem}\label{mickey}
Given a  $1$-Lipschitz function $\phi^0: [0,1]\mapsto \bbR$ with $\phi^0(0)=0$,
let $(h(t))_{t\ge0}$ the dynamics starting from initial condition
$h^{0}\in \gO_{M_L,N_L}$ given by
\begin{equation}\begin{split}
h^{0}_x&:= 2\lfloor L\phi^0(x/L)/2\rfloor \text{ for even } x \\
h^{0}_x&:= 2\lfloor (L\phi^0(x/L)-1)/2\rfloor+1 \text{ for odd } x
\end{split}\end{equation}
($M_L$ and $N_L$ are implicitly fixed by $L$ and $\phi^0(1)$).
 For all $T\ge 0$ and $\gep>0$, w.h.p.
\begin{equation}
\sup_{t\in [0,T],x\in [0,1]} \frac{1}{L} \left |  h_{\lfloor x L\rfloor} (L^2 t)-L\phi(x,t) \right|\le \gep
\end{equation}
where $\phi: [0,1]\times \bbR_+\to \bbR $ is the solution of the  Cauchy problem
\begin{equation}
\label{eq:edpcont}
\begin{cases}
\partial_t \phi( x, t)= \frac12\partial^2_x \phi (x, t) \quad \forall t>0, \quad \forall x\in (0,1), \\
\phi( 0, t)=0, \quad \phi(1, t)=\phi^0(1) \quad \forall t>0,\\
\phi(x,0)=\phi^0(x) \quad \forall x\in (0,1).
\end{cases}
\end{equation}

\end{theorem}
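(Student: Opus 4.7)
The plan is to identify the corner-flip dynamics with the symmetric simple exclusion process (SSEP), derive a closed discrete heat equation for the expected height function, and then promote pointwise convergence to uniform convergence in $(x,t)$ via a variance bound combined with a martingale maximal inequality.

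First, set $\eta_x := (h_{x+1} - h_x + 1)/2 \in \{0, 1\}$ for $x = 0, \ldots, L-1$. A direct check shows that a corner flip at site $y \in \{1, \ldots, L-1\}$ exchanges $(\eta_{y-1}, \eta_y)$ when these differ and is the identity otherwise, so $(\eta_x(t))$ is exactly SSEP on $\{0, \ldots, L-1\}$ with edge-exchange rate $1/2$ and no communication with the exterior. Enumerating the four local configurations yields the identity
\begin{equation*}
h_x^{(x)} - h_x = h_{x-1} + h_{x+1} - 2h_x, \qquad 1 \le x \le L-1
\end{equation*}
(valid also in the degenerate case where the corner is inert, both sides then vanishing). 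Consequently $\mathcal L h_x = \tfrac12(h_{x-1} + h_{x+1} - 2h_x)$, so $m_x(t) := \E[h_x(t)]$ solves the discrete heat equation on $\{1,\ldots,L-1\}$ with time-independent Dirichlet data $m_0 = 0$, $m_L = M-N$. Standard convergence of the discrete heat kernel on $\{0,\ldots,L\}$ to the continuous one on $[0,1]$ with Dirichlet boundary conditions, combined with the Lipschitz regularity of $\phi^0$, gives $(1/L)\, m_{\lfloor yL\rfloor}(L^2 s) \to \phi(y, s)$ uniformly on $[0, 1] \times [0, T]$.

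Next I control fluctuations. The process $M_x(t) := h_x(t) - m_x(t)$ is a martingale with predictable quadratic variation satisfying $d\langle M_x\rangle_t/dt = \tfrac12(h_x^{(x)} - h_x)^2 \le 2$. For a bound on $\var(h_x(t))$ uniform in $t$, I invoke the negative-correlation property of SSEP, $\mathrm{Cov}(\eta_y(t), \eta_z(t)) \le 0$ for $y \ne z$, which is a classical consequence of the two-particle duality (Harris's stirring construction). Since $h_x = 2\sum_{y<x}\eta_y - x$, this yields $\var(h_x(t)) \le L$ for every $t \ge 0$, and Chebyshev gives pointwise convergence in probability of $h_{\lfloor yL\rfloor}(L^2 s)/L$ to $\phi(y, s)$.

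Uniformity in $y$ is immediate: the bound $|h_{x+1}(t) - h_x(t)| = 1$ together with Lipschitz regularity of $\phi(\cdot, s)$ reduces the supremum over $y \in [0, 1]$ to the maximum over a finite grid. The main obstacle is uniformity in $s$. I partition $[0, T]$ into $O(1/\delta_L)$ subintervals with endpoints $\{s_k\}$; at these grid times, the variance bound $\var(h_x) \le L$ and a union bound control $|h_{\lfloor yL\rfloor}(L^2 s_k)/L - \phi(y, s_k)|$ simultaneously. Between consecutive grid times, a Burkholder-Davis-Gundy inequality applied to $(M_x(L^2 s) - M_x(L^2 s_k))_{s \in [s_k, s_{k+1}]}$, using $[M_x]_{L^2 s_{k+1}} - [M_x]_{L^2 s_k} \le 4 N_x(L^2[s_k, s_{k+1}])$ where $N_x$ is the Poisson process of flips at site $x$, gives high-moment control of the supremum at scale $O(\sqrt{\delta_L})$; parabolic regularity handles $|\phi(y, s) - \phi(y, s_k)| = O(\delta_L)$. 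Choosing $\delta_L \to 0$ slowly enough (for instance $\delta_L = L^{-1/2}$) so that both the pointwise deviations at grid times and the increments between them have vanishing failure probabilities closes the argument.
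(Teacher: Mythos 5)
Your SSEP identification, the identity $\mathcal L h_x = \tfrac12(h_{x+1}+h_{x-1}-2h_x)$, and the conclusion that $m_x(t)=\E[h_x(t)]$ solves the discrete heat equation are all correct, and the variance bound $\var(h_x(t))\le L$ via negative correlations of SSEP (valid for a deterministic initial configuration) is a genuinely different and clean route to the one-time-marginal estimate than the paper's; the paper never invokes negative correlations, working instead with a Fourier decomposition. The reduction of the spatial supremum to a grid via the Lipschitz property of $h-\Phi$ also matches the paper's observation.

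The gap is in the time-uniformity step, and it is fatal as written: the process $M_x(t):=h_x(t)-m_x(t)$ is \emph{not} a martingale. By Dynkin's formula, the martingale part of $h_x$ is $h_x(t)-h_x(0)-\int_0^t \tfrac12\Delta h_x(s)\,\dd s$, so
\[
\E\big[dM_x(t)\,\big|\,\cF_t\big]=\tfrac12\big(\Delta h_x(t)-\Delta m_x(t)\big)\,dt,
\]
which does not vanish. Hence BDG cannot be applied to $M_x$, and the claimed $O(\sqrt{\delta_L})$ control of $\sup_{s\in[s_k,s_{k+1}]}|M_x(L^2 s)-M_x(L^2 s_k)|$ does not follow. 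If you try to salvage the argument by applying BDG to the true Dynkin martingale and treating the drift $\int_{s_kL^2}^{tL^2}\tfrac12\Delta(h-m)_x\,\dd u$ separately, the crude bound $|\Delta(h-m)_x|\le O(1)$ only gives $O(L^2\delta_L)$ for the drift, which after dividing by $L$ diverges for any $\delta_L\gg L^{-1}$; a better bound on $\Delta(h-m)$ would require the very closeness of $h$ and $m$ one is trying to prove, so the argument becomes circular. The correct exact martingales in this problem live in the eigenbasis of the discrete Dirichlet Laplacian: with $H_t^k=\sum_x (h_x(t)-\Phi(x,t))\sin(k\pi x/L)$, the process $e^{\lambda_k t/2}H_t^k$ is a genuine martingale (because the generator acts linearly on each Fourier mode), and Doob's maximal inequality applied mode by mode, together with a deterministic $O(L^2/k)$ bound on the high modes, is what produces the time-uniform statement in the paper. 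You would need to replace your BDG step by this spectral (or an equivalent Duhamel-formula) argument to close the proof.
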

Here, $\lfloor x\rfloor$ denotes the integer part of $x$, and 
the fact that $h^0$ does belong to $\O_{M_L,N_L}$ is an easy consequence of 
 $\phi^0$ being  $1$-Lipschitz.

\subsection{Local dynamics around the poles and a zero-range process}
\label{sub:dap}

For the definition of the second auxiliary dynamics, we use the same
notation as in the previous section, but no confusion should arise as
the proofs will be given in two independent sections.  The state space
is 
\begin{equation}
\label{thesta}
\gO_{L}:=\{ h: \{-L,\dots, L+1 \}\mapsto \bbZ\  \}.
\end{equation}
For $h\in\gO_{L}$ and $x\in \{-L+1,\dots, L \}$ define $h^{+,x}$
(resp. $h^{-,x}$) as the configuration such that
%
$h^{+,x}_y=h_y$ if $y\ne x$ and 
$h^{+,x}_x=h_x+1 $ (resp. $h^{-,x}_x=h_x-1 $).
We consider the Markov chain $(h(t))_{t\ge 0}$ started from some $h^0\in 
\Omega_L$ and with generator $\mathcal L$ defined by
\begin{equation}\label{dnaspo}
\mathcal L f (h)=\frac12\sum_{x=-L+1}^L c^{+,x}(h)(f(h^{+,x})-f(h))+c^{-,x}(h)(f(h^{-,x})-f(h))
\end{equation}
where
\begin{equation}\begin{split}
 c^{+,x}(h)&=\ind_{\{h_{x+1}>h_x\}}+\ind_{\{h_{x-1}>h_x\}},  \\
 c^{-,x}(h)&=\ind_{\{h_{x+1}<h_x\}}+\ind_{\{h_{x-1}<h_x\}}.
\end{split}\end{equation}
Note that the values $h_{-L}$ and $h_{L+1}$ are fixed in time and
should be considered as boundary conditions.  
\begin{rem}\rm
  \label{rem:131}

  This dynamics corresponds to the motion of the interface for a
  \emph{modified Ising dynamics} in a vertical strip of width $2L$
  with the following boundary condition: spins on the left
  (resp. right) boundary of the system are "$+$" if and only if their
  vertical coordinate is larger than $h_{-L}$ (resp. $h_{L+1}$). The
  dynamics is modified in the sense that 
updates are discarded if after the update the
  boundary between the "$-$" and "$+$" domain is not a simple (open) curve
  (see Figure \ref{fig:interditmove}). 
It is at times more convenient to identify  $h\in \Omega_L$ with a càdlàg function
$H:[-L-1/2,L+3/2]\mapsto \bbZ$ which equals identically $h_n$ on intervals $[n-1/2,n+1/2)$ 
for integer $n$.

 \begin{figure}[hlt]

\leavevmode
\epsfxsize =10 cm
\psfragscanon 
\psfrag{pluspins}{$+$ spins}
\psfrag{minusspins}{$-$ spins}
\epsfbox{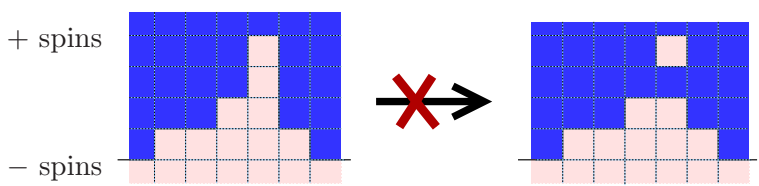}
\begin{center}
\caption{\label{fig:interditmove} An example of spin update that splits the interface into two disconnected components.
The interface dynamics presented in this section does not allow this kind of move.}
\end{center}
\end{figure}

Another way to interpret this dynamics \cite{cf:Spohn} is to look at
the gradients $\eta_x=h_{x+1}-h_x$: one recognizes then a zero-range
process with two type of particles (if $\eta_x=n>0$ we say there are
$n$ particles of type A at $x$, if $\eta_x=-n<0$ we say there are $n$
type-B particles).  Each particle performs a symmetric simple random
walk with jump rate   $1/(2n)$ (with $n$ the occupation number of the site where the particle sits) 
to either left or right and particles of
different type annihilate instantaneously when they are at the same
site. See Figure \ref{fig:partisys}.

 \begin{figure}[hlt]

\leavevmode
\epsfxsize =10 cm
\psfragscanon 
\epsfbox{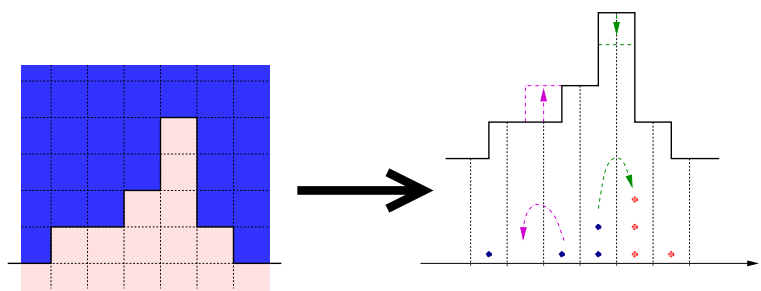}
\begin{center}
  \caption{\label{fig:partisys} Correspondence between interface
    dynamics and zero-range process.  Arrows represent possible
    motions for the interface and their representation in terms of
    particle moves.  When an A particle jumps on a B particle
    (green arrow) both annihilate.}
\end{center}
\end{figure}

\end{rem}

In \cite[Appendix A]{cf:Spohn}, this dynamics was considered but in a
periodized setup. A scaling limit result was given but the proof there
is somewhat sketchy.  Here we adapt the proof to the non-periodic case
and write it in full details.

\medskip

Consider $\phi^0: [-1,1]\mapsto \bbR$ a  $C^2$
 function with $\phi^0(1)=\phi^0(-1)=0$. 
We further assume that $\phi^0$ has a finite number of changes
of monotonicity.
Define  $\Phi_0:\{-L,\dots,L+1\}\mapsto \bbR$ as
\begin{equation}
\label{puntatore}
\Phi_0(x):=L\phi^0( x/L )
\end{equation}
and $h^{0}:\{-L,\dots,L+1\}\mapsto \bbZ$ by 
\begin{equation}\label{defho}
h^{0}_x:=\lfloor \Phi_0(x) \rfloor.
\end{equation}
We define $\Phi:\{-L,\dots, L+1 \}\times \bbR_+\to \bbR$ as the solution of the following Cauchy problem:
\begin{equation}
\label{eq:edpdiscret}
\begin{cases}
 \partial_t \Phi(x,t)&= \frac12\left[\sigma(q_{x}(t))-\sigma(q_{x-1}(t))\right],
\\
 \Phi( L+1, t)&=\Phi(-L,t)=0, 
\\ 
  \Phi(x, 0)&=\Phi_0(x)
\end{cases}
\end{equation}
for every $t\ge0$ and $x\in \{- L,\dots, L +1\}$, where
$\sigma(u)={u}/({1+|u|})$ and
\begin{equation}\begin{split}
\label{qs}
q_{x}(t):&=\Phi(x+1,t)-\Phi(x,t).
\end{split}\end{equation}

The result we state now is slightly weaker than Theorem \ref{mickey} as it allows to control the profile $h$
only at a fixed time and not on a whole time interval.

\begin{theorem}\label{spo}
  Given $\phi^0$ as above, consider $(h(t))_{t\ge0}$ the dynamics
  described by \eqref{dnaspo} with initial condition $h^{0}$ as in
  \eqref{defho}. Then for any $t$, the following convergence holds in
  probability
\begin{equation}\label{atrusk}
\lim_{L\to \infty} \max_{x\in \{-L,\dots,L+1\}}\frac{1}{L}\left|h_x(L^2t)-\Phi(x,L^2t)\right|=0.
\end{equation}

\end{theorem}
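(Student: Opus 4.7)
The plan is a hydrodynamic-limit argument in three steps: a Dynkin martingale decomposition identifies the microscopic drift as a discrete divergence, a one-block replacement lemma closes it against \eqref{eq:edpdiscret}, and stability of the latter transfers the closure into the claimed sup-norm estimate. A direct computation from \eqref{dnaspo} gives $\mathcal L h_x = \tfrac12[\sign(\eta_x)-\sign(\eta_{x-1})]$ with $\eta_x:=h_{x+1}-h_x$, so Dynkin's formula yields
\begin{equation*}
h_x(L^2t) = h_x(0) + \tfrac12\int_0^{L^2t}\bigl[\sign(\eta_x(s))-\sign(\eta_{x-1}(s))\bigr]\,ds + M_x^L(t),
\end{equation*}
with a martingale $M_x^L$ whose predictable quadratic variation is $O(L^2 t)$; a union bound over $|x|\le L+1$ then yields $\sup_x|M_x^L(t)| = o(L)$ w.h.p. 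This already has the divergence form of \eqref{eq:edpdiscret}, so the whole game is to replace the microscopic flux $\sign(\eta_x(s))$ by the macroscopic flux $\sigma(q_x(s))$ after time averaging.

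The structural input comes from the zero-range representation of Remark \ref{rem:131}. For $\mu\in\bbR$, the product measure $\nu_\mu$ that assigns to each bond gradient an independent signed-geometric law of mean $\mu$ is invariant for the dynamics on a region of fixed slope sign, and one computes directly
\begin{equation*}
\bbE_{\nu_\mu}\bigl[\sign(\eta_0)\bigr] = \frac{\mu}{1+|\mu|} = \sigma(\mu),
\end{equation*}
which explains the occurrence of $\sigma$ in \eqref{eq:edpdiscret}. Combined with a one-block (and, if needed, two-block) estimate in the Kipnis--Landim style, based on an $O(L)$ relative-entropy bound with respect to a reference product measure and on the Dirichlet form of the generator, this produces
\begin{equation*}
\frac{1}{L}\int_0^{L^2 t}\bigl[\sign(\eta_x(s))-\sigma(\langle\eta\rangle_{x,\ell}(s))\bigr]\,ds \xrightarrow[L\to\infty]{} 0
\end{equation*}
in probability and uniformly in $x$, for some mesoscopic block size $1\ll\ell\ll L$, where $\langle\eta\rangle_{x,\ell}$ is the spatial mean of $\eta$ on a block of size $\ell$ centered at $x$.

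Writing $D_x(t) := h_x(L^2t)-\Phi(x,L^2t)$, the two previous steps give an evolution of the form $D_x(t) - D_x(0) = \tfrac12\int_0^t[E_x(s)-E_{x-1}(s)]\,ds + o(L)$ with error terms $E_x$ that are $o(1)$ in suitable time-integrated norms, together with a Lipschitz coupling through $\sigma$. Since $\sigma$ is increasing and $1$-Lipschitz, the semigroup associated to \eqref{eq:edpdiscret} obeys a discrete maximum principle, and a summation-by-parts $\ell^2$-energy estimate, or equivalently a Gronwall argument on $\sup_x|D_x(t)|$, propagates the smallness and yields $L^{-1}\sup_x|D_x(t)|\to 0$ in probability, which is \eqref{atrusk}. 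The main obstacle is the replacement lemma: the annihilation of opposite-type particles spoils the clean reversibility and irreducibility of a standard zero-range process, so the block estimate must be carried out separately on the intervals where the macroscopic slope has constant sign (these are finite in number and of macroscopic length thanks to the finiteness of monotonicity changes of $\phi^0$), with an a~priori control on the propagation of the zero crossings of $\eta$ used to treat the interfaces between them.
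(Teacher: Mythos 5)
Your high-level plan — a Dynkin decomposition, block replacement of $\sign(\eta_x)$ by $\sigma$ of a local mean, and stability of \eqref{eq:edpdiscret} — has the right skeleton, and the identity $\bbE_{\rho^u}[\sign(\eta_0)]=\sigma(u)$ is indeed the key computation that identifies the macroscopic flux. But there are two genuine gaps. First, the martingale bound: each $M^L_x(L^2 t)$ has predictable quadratic variation of order $L^2$, so Doob's inequality plus a union bound over $O(L)$ sites yields only $\sup_x|M^L_x|=O(L^{3/2})$ (and even an exponential inequality gives only $O(L\sqrt{\log L})$), not the $o(L)$ you claim. The paper never bounds $\sup_x|M^L_x|$ at all: it proves an $\bbL_2$-in-expectation estimate (Proposition \ref{prop:propspohn}), where the martingale terms disappear because one takes $\partial_t\bbE[L^{-3}\sum_x(\Phi-h_x)^2]$ and the crucial cancellation is the nonpositivity, after space--time averaging, of the cross term $\bbE[A(x,s)]$; it then passes from $\bbL_2$ to $\bbL_\infty$ using the fact that $x\mapsto h_x(t)-\Phi(x,t)$ is $O(1)$-Lipschitz w.h.p.\ (from Lemma \ref{reloumec}(iii)), so a small $\ell^2$ norm forces a small sup norm. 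Your ``$\ell^2$-energy estimate'' is in the right spirit, but it is \emph{not} equivalent to a sup-norm Gronwall argument, and only the $\ell^2$ version sidesteps the martingale obstacle.

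Second, the replacement step. You propose the Guo--Papanicolaou--Varadhan entropy / Dirichlet-form machinery, but for the annihilating two-type zero-range process it is far from clear that an $O(L)$ relative-entropy bound and a workable Dirichlet form are available — the dynamics is not reversible with respect to any natural product reference measure. The paper avoids the entropy method entirely: it forms the space--time averaged measure $\mu^L_t$ of \eqref{eq:defmu} on $\bbZ^{\bbZ}$ (after periodizing), proves tightness from the stochastic domination of $\pm\eta(t)$ by i.i.d.\ geometrics (Lemma \ref{reloumec}(iii) — which is precisely why the initial condition is first replaced by $\hat h^0$ with independent signed-geometric gradients, a step you omit), shows that any limit point is translation-invariant and stationary for the infinite-volume generator, and invokes Andjel's classification \cite{cf:Andjel} to conclude it is a mixture $\rho^\nu$ of signed-geometric products. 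The annihilation is handled by a single deterministic observation inside the proof of Proposition \ref{wconv}: the number of sign changes of $(\eta_x)_x$ is nonincreasing in time and starts out $O(1)$, so any limit of $\mu^L_t$ gives full mass to single-sign configurations. Your ``a priori control on the propagation of zero crossings'' is the right heuristic, but it carries no estimate, and since the crossings travel macroscopic distances in diffusive time the constant-sign decomposition you want to use is not static; the monotonicity of the sign-change count is what makes this rigorous in the paper.
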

It is quite intuitive that one should have that $\frac{1}{L}\Phi( \lfloor Lx \rfloor ,L^2t)\to \phi(x,t)$ for any $x\in[-1,1]$,
where  $ \phi:[-1,1]\times \bbR_+\to \bbR$ is the solution of
\begin{equation}
\begin{cases}
\partial_t \phi(x,t)&=\frac12\frac{\partial^2_x \phi(x,t)}{(1+|\partial_{x} \phi(x,t)|)^2}
\\
 \phi( 1, t)&=\phi(-1,t)=0\\
  \phi(x, 0)&=\phi^0(x)
\end{cases}
\end{equation}
for $t\ge0$ and $x\in (-1,1)$.  The particular form of the
non-linearity of this PDE makes the convergence question non-trivial,
but fortunately Theorem \ref{spo} together with a comparison with the
heat equation (cf.\ Section \ref{sub:lb}) turns out to be sufficient
for our purposes.  Indeed, define
$\bar \phi: [-1,1]\times \bbR_+\to \bbR$ to be the solution of
\begin{equation}
\begin{cases}
\partial_t \bar\phi(x,t)&=\frac12\partial^2_x \bar\phi(x,t)
\\
 \bar\phi( 1, t)&=\bar\phi(-1,t)=0\\
 \bar \phi(x, 0)&=\phi^0(x).
\end{cases}
\end{equation}
Then
\begin{corollary}\label{spo2}
  Let $\phi^0$ be as above, and assume further that it is 
  concave with $\| \partial_x \phi^0\|_\infty \le \eta$. For every
  $t\ge0$ and every $\gep>0$ the following inequality holds w.h.p.
\begin{equation}
\bar\phi(x/L,t)-\gep \le \frac{1}{L}h_x(L^2t)\le \bar\phi(x/L,(1+\eta)^{-2} t)+\gep
\quad\text{for every\;} x\in\{-L,\dots,L+1\}.
\end{equation} 
\end{corollary}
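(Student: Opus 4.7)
The plan is to use Theorem \ref{spo} to reduce the probabilistic statement to a deterministic PDE comparison, and then bracket the solution of the nonlinear PDE between two solutions of the linear heat equation, differing only by a rescaling of time, via the parabolic maximum principle.

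First, I would combine Theorem \ref{spo} with a standard discrete-to-continuous convergence argument for the scheme \eqref{eq:edpdiscret} to conclude that, with high probability, $\tfrac{1}{L}h_x(L^2 t)$ is uniformly close in $x$ to $\phi(x/L,t)$, where $\phi$ is the classical solution of
\begin{equation*}
\partial_t \phi = \tfrac{1}{2}\,\frac{\partial_x^2 \phi}{(1+|\partial_x\phi|)^2},\qquad \phi(\pm 1,t)=0,\qquad \phi(\cdot,0)=\phi^0.
\end{equation*}
The passage from $\Phi(x,L^2 t)/L$ to $\phi(x/L,t)$ is a routine consistency estimate; the smoothness, concavity and bounded slope of $\phi^0$ ensure uniform control on the discrete derivatives needed to make the argument quantitative. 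What then remains to prove is the two-sided bound $\bar\phi(x,t)\le \phi(x,t)\le \bar\phi(x,(1+\eta)^{-2}t)$ for every $x\in[-1,1]$ and every $t\ge 0$.

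The key preliminary deterministic input is that concavity of $\phi(\cdot,t)$ and the slope bound $\|\partial_x\phi(\cdot,t)\|_\infty\le\eta$ propagate in time. Formally, $\partial_x\phi$ and $\partial_x^2\phi$ satisfy linear parabolic equations obtained by differentiating the PDE once and twice in $x$, and the maximum principle on these equations gives the two bounds. To handle the mild non-smoothness of $\sigma(q)=q/(1+|q|)$, I would first regularize $\sigma$ to a smooth, bounded, strictly increasing surrogate, derive the bounds for the regularized flow, and pass to the limit at the end. Granted these bounds, the diffusion coefficient $D(\phi):=\tfrac{1}{2}(1+|\partial_x\phi|)^{-2}$ satisfies $\frac{1}{2(1+\eta)^{2}}\le D(\phi)\le\tfrac{1}{2}$ throughout the evolution, while $\partial_x^2\phi\le 0$.

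The two-sided inequality then follows from the parabolic maximum principle. For the lower bound, $\partial_t\phi-\tfrac12\partial_x^2\phi=(D(\phi)-\tfrac12)\partial_x^2\phi\ge 0$ makes $\phi$ a supersolution of the heat equation with identical initial and boundary data, so $\phi\ge\bar\phi$. For the upper bound, $\partial_t\phi-\tfrac{1}{2(1+\eta)^2}\partial_x^2\phi=\bigl(D(\phi)-\tfrac{1}{2(1+\eta)^{2}}\bigr)\partial_x^2\phi\le 0$ makes $\phi$ a subsolution of $\partial_t v=\tfrac{1}{2(1+\eta)^{2}}\partial_x^2 v$, whose solution with the prescribed data is exactly $\bar\phi(x,(1+\eta)^{-2}t)$. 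The main obstacle in this program is the propagation of concavity and of the slope bound: both facts are standard for smooth uniformly parabolic equations, but they require the regularization step above here because $\sigma$ is only $C^1$ and the PDE degenerates as $|\partial_x\phi|\to\infty$. All other ingredients (the convergence of the discrete scheme and the linear heat-equation comparisons) are then classical.
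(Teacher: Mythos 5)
Your proposed comparison at the continuous level is mathematically sound, and in particular you correctly identify the key mechanism: with concavity propagating in time, $\partial_x^2\phi\le 0$, while the slope bound gives $\tfrac{1}{2(1+\eta)^2}\le D(\phi)\le\tfrac12$, so $\phi$ is a supersolution of the plain heat equation and a subsolution of the heat equation with diffusivity $\tfrac{1}{2(1+\eta)^2}$, which is exactly $\bar\phi(\cdot,(1+\eta)^{-2}\cdot)$. However, your route differs from the paper's in a way that matters, and one step you describe as routine is not.

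The paper deliberately stays at the \emph{discrete} level. Theorem \ref{spo} gives convergence of $h_x(L^2 t)$ to the solution $\Phi(x,\cdot)$ of the \emph{discrete} scheme \eqref{eq:edpdiscret}, and then Proposition \ref{proplaplb} sandwiches $\Phi$ between two \emph{discrete} linear heat equations $\Phi_1,\Phi_2$. Only the two linear discrete heat equations are sent to their continuous limits (via Lemma \ref{lem:comparaison}), which is genuinely routine because those are linear. Moreover, the discrete comparison in Proposition \ref{proplaplb} is engineered so that no concavity of $\Phi$ itself is ever needed: the touching-point argument uses only strict concavity of $\Phi_2$ (automatic for the linear heat equation with concave initial data) and the discrete slope bound $\max_x|q_x(t)|\le\|\partial_x\phi^0\|_\infty$, which comes for free from Lemma \ref{reloumec}(i). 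In contrast, your plan requires (a) proving $\Phi(\lfloor Lx\rfloor,L^2 t)/L\to\phi(x,t)$ for the \emph{nonlinear} degenerate scheme, and (b) propagating concavity and the slope bound for the continuous nonlinear PDE via regularization. Step (a) is precisely what the paper flags, just before Theorem \ref{spo}, as the point where ``the particular form of the non-linearity of this PDE makes the convergence question non-trivial'' and which it constructs the proof specifically to avoid: to run a consistency-plus-stability argument you would first need quantitative $C^{2,\alpha}$-in-space, $C^{1}$-in-time regularity for $\phi$, and the kink of $\sigma'$ at $0$ makes the required Schauder-type estimates not entirely standard. Step (b) is doable but is exactly the work that Lemma \ref{reloumec} already does at zero cost in the discrete setting. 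So your approach would, if all details were filled, give the same conclusion, but it trades the paper's light discrete argument for a substantially heavier continuous one, and the phrase ``routine consistency estimate'' conceals the main analytical obstacle the authors went out of their way to circumvent.
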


\begin{proof}
The result follows by combining Theorem \ref{spo}, Proposition \ref{proplaplb}, and by taking limits of rescaled versions of $\Phi_1$ and $\Phi_2$ in \eqref{trare} when $L$ tends to infinity (cf.\ Lemma \ref{lem:comparaison}).
\end{proof}

\subsection{About the scale-invariant shape}
\label{subsec:asis}
 Now that we know how
the interface should evolve locally (from Theorems \ref{mickey} and \ref{spo}) it is
possible to explain why $\mathscr D$ should be scale invariant.  By
symmetries of the problem and the fact that motion is driven by curvature,
the scale-invariant shape should be convex
symmetric around the axes $\bbR\bf e_1$, $\bbR\bf
e_2$.
Therefore it is enough to consider 
the boundary of the intersection of $\mathscr D$ with the first quadrant.

From Theorem \ref{mickey}, if $f$ is a Lipschitz function and  
$\partial \mathscr D$ is the graph of $f$ in the  coordinate system
 $(\bff_1, \bff_2)$,  the initial drift 
in the $\bf f_2$ direction is
$(1/4)\partial^2_x f,$
where the factor $1/4$ (instead of $1/2$) is due to the fact that in the correspondence
between Ising dynamics and dynamics of nearest-neighboring paths, space
has to be rescaled by $\sqrt{2}$, cf.\  Remark
\ref{121}).  One the other hand, the homothetic contraction of a
shape $\mathcal D$ of initial velocity $\alpha$
gives an initial drift of the interface in the $\bf f_2$ direction
 \begin{equation}
\alpha(-f+x\partial_x f).
 \end{equation}
That leads to the partial differential equation
 \begin{equation}
\partial^2_x f= 4\alpha(-f+x\partial_x f).
\label{eq:eqdif1}
 \end{equation}
 Next we impose the correct boundary conditions on $f$:
 \begin{itemize}
 \item We fix the scaling  by imposing that the point $(1,0)$
(and therefore also $(0,1),(-1,0),(0,-1)$) belongs to $\partial \mathscr D$.
 This gives 
 \begin{eqnarray}
   \label{eq:bc1}
f\left(\pm1/{\sqrt{2}}\right)=1/{\sqrt 2}.   
 \end{eqnarray}
 \item To guarantee that the curvature of $\partial \mathscr D$ is
well defined at the point $(0,1)$ we have to impose
\begin{eqnarray}
  \label{eq:bc2}
  \partial_x f\left(-1/{\sqrt{2}}\right)=-\partial_x f\left(1/{\sqrt{2}}\right)=1.
\end{eqnarray}
 \end{itemize}
  
We finally notice that
\begin{lemma}
\label{lemma:soleqdif1}
The function $f_0$ defined in \eqref{eq:soleqdif1} is the unique
solution of the Cauchy problem \eqref{eq:eqdif1}-\eqref{eq:bc1}-\eqref{eq:bc2}
for  $x\in (-1/\sqrt{2},
+1/\sqrt{2})$.
For other values of $\alpha$ the above problem has no solution.
\end{lemma}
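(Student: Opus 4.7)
The plan is to explicitly solve the linear second-order homogeneous ODE \eqref{eq:eqdif1} and then determine which combinations of $(\alpha,A,B)$ are compatible with the four boundary conditions. The first step is the observation that $f_1(x)=x$ is a solution of \eqref{eq:eqdif1} for every $\alpha$, since $f_1''=0$ and $-f_1+xf_1'=-x+x=0$. To find a second linearly independent solution I would apply reduction of order: writing $f=xg$, the ODE reduces to $xg''+(2-4\alpha x^2)g'=0$, which is first order in $u:=g'$. Solving $u'/u=-2/x+4\alpha x$ gives $u(x)=C e^{2\alpha x^2}/x^2$, and one integration by parts produces
\[
f_2(x):=4\alpha x\int_0^x e^{2\alpha t^2}\,dt-e^{2\alpha x^2},
\]
which is smooth on $\bbR$ and satisfies \eqref{eq:eqdif1}. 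The general solution of \eqref{eq:eqdif1} is therefore $f(x)=Af_1(x)+Bf_2(x)$.

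Next I would impose the four boundary conditions \eqref{eq:bc1}-\eqref{eq:bc2}. The key point is that $f_1$ is odd while $f_2$ is even (change of variable $t\mapsto -t$ in the defining integral). Taking symmetric and antisymmetric combinations of the conditions at $\pm 1/\sqrt{2}$ splits them into an ``even part'' constraining $B$ and an ``odd part'' constraining $A$; the latter forces $A=0$, so any admissible $f$ is automatically even. The two remaining constraints $Bf_2(1/\sqrt{2})=1/\sqrt{2}$ and $Bf_2'(1/\sqrt{2})=-1$ are two equations in one unknown $B$, hence compatible if and only if
\[
\frac{f_2(1/\sqrt{2})}{f_2'(1/\sqrt{2})}=-\frac{1}{\sqrt{2}}.
\]
A direct computation using $f_2'(x)=4\alpha\int_0^x e^{2\alpha t^2}\,dt$ yields $f_2(1/\sqrt{2})=2\sqrt{2}\alpha I(\alpha)-e^\alpha$ and $f_2'(1/\sqrt{2})=4\alpha I(\alpha)$ with $I(\alpha):=\int_0^{1/\sqrt{2}}e^{2\alpha t^2}\,dt$, so the compatibility relation rearranges to $4\sqrt{2}\alpha e^{-\alpha}I(\alpha)=1$, which is precisely \eqref{eq:alpha}.

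This shows that the boundary value problem has no solution unless $\alpha$ satisfies \eqref{eq:alpha}; for the unique positive such $\alpha$, the coefficient $B$ is then determined by either one of the two remaining conditions. Using \eqref{eq:alpha} to simplify $f_2(1/\sqrt{2})=-e^\alpha/2$, I would obtain $B=(1/\sqrt{2})/(-e^\alpha/2)=-\sqrt{2}\,e^{-\alpha}=\beta$, so the unique solution is $f=\beta f_2=f_0$. The only real work in this plan lies in the bookkeeping of the integration by parts producing $f_2$ and in the two algebraic checks that the compatibility condition is exactly \eqref{eq:alpha} and that the resulting $B$ equals exactly $\beta$; there is no genuine analytical obstacle since everything reduces to a $2\times 2$ linear system in $(A,B)$ whose solvability imposes a scalar equation on $\alpha$.
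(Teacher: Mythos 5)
Your proof is correct, and it takes essentially the same route as the paper, which simply dismisses the whole thing as ``a matter of checking'' plus a reference to standard ODE uniqueness. You flesh this out cleanly: identify the trivial solution $f_1(x)=x$, produce the second independent solution $f_2$ by reduction of order, and use the even/odd decomposition at $x=\pm1/\sqrt{2}$ to reduce the four boundary conditions to $A=0$ plus a compatibility ratio in $B$, which is exactly \eqref{eq:alpha}. The one thing your version buys that the paper's terse argument glosses over is the second claim of the lemma — that no solution exists when $\alpha$ does not satisfy \eqref{eq:alpha} — which falls out immediately from your compatibility condition rather than requiring a separate argument. The computations ($f_2'=4\alpha\int_0^x e^{2\alpha t^2}dt$, the compatibility ratio rearranging to $4\sqrt{2}\,\alpha e^{-\alpha}I(\alpha)=1$, and $B=-\sqrt{2}e^{-\alpha}=\beta$) all check out.
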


\begin{proof}
Uniqueness of the solution is standard from theory of ordinary differential equation. The rest is just a matter of checking.
\end{proof}

\subsection{Organization of the paper}

Instead of proving directly Theorem \ref{th:convex} and then deducing
Theorem \ref{mainres} as a corollary,
we decided for pedagogical reasons to give first the proof in the case of the scale-invariant
droplet and then to point out what needs to be modified in the more
general case of a convex droplet. The reason is that, this way, we 
can easily separate the question of comparing the stochastic evolution
with the deterministic one (which works more or less the same in the two cases
but is simpler for the invariant droplet, due to its symmetries)
from the analytic, PDE-type issues which appear only in the general case.

The paper is therefore organized as follows:
\begin{itemize}
\item in Section \ref{infinitez}, we show that to prove  Theorem \ref{mainres}
it is sufficient to have a good control on the continuity of the
interface motion (Proposition \ref{trucrelou}) and a result on the
evolution after an ``infinitesimal time'' $\gep L^2$ (Proposition \ref{mainprop}).
Such crucial results are proven in Sections \ref{upb} and \ref{lwb};
\item in Section \ref{sec:convexbu} we first prove Theorem \ref{th:deterministico}
on the existence of a solution to \eqref{eq:meanc}, and then we prove
Theorem \ref{th:convex} via a suitable generalization of Propositions
\ref{trucrelou} and \ref{mainprop};
\item finally, the hydrodynamic limit results of Theorems 
\ref{mickey} and \ref{spo} are proven in detail  in Section \ref{sec:potm}
 and the Appendix \ref{sec:pots} respectively.
\end{itemize}

\section{Proof of Theorem \ref{mainres}: evolution of the scale-invariant 
droplet}\label{infinitez}

\subsection{Reducing to an ``infinitesimal'' time interval}

We decompose the proof of Theorem \ref{mainres} into two propositions.  The
first (and the main one) says that after a time $\gep L^2$ the droplet
looks very much the same but contracted by a factor
$(1-\alpha\gep+o(\gep))$.  
\begin{proposition}\label{mainprop}
  For all $\delta>0$ there exists $\gep_0(\delta)>0$ such that for all
  $0<\gep<\gep_0(\delta)$, w.h.p.,
\begin{equation}\label{hok}
\mathcal{A}_L(L^2 \gep) \subset (1-\gep(\alpha-\gd))L\mathscr{D},
\end{equation}
and 
\begin{equation}\label{hic}
\mathcal{A}_L(L^2 \gep) \supset (1-\gep(\alpha+\gd))L\mathscr{D}.
\end{equation}
\end{proposition}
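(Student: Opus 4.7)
The plan is to localize the problem. Decompose $\partial(L\mathscr{D})$ into $\sim K$ macroscopic arcs, with $K=K(\delta)$ fixed but large, and apply the local hydrodynamic results (Theorems \ref{mickey} and \ref{spo}, and Corollary \ref{spo2}) arc by arc, patching the bounds together via the stochastic monotonicity of Section \ref{sub:gcm}.

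On an arc $I$ away from the four poles, $\partial\mathscr{D}$ is, in the rotated frame $(\bof_1,\bof_2)$, the graph of a smooth function with slope bounded away from $\pm\infty$. I would build a thin curved corridor $R$ straddling $I$ and use monotonicity, by freezing a thin layer of $-$ spins on the outer side and a thin layer of $+$ spins on the inner side of $\partial(L\mathscr{D})\cap R$, to sandwich the true dynamics inside $R$ between two confined dynamics whose interface is, at all times, the graph of a height function. After rotation by $\pi/4$ and rescaling by $\sqrt 2$, these are exactly the corner-flip dynamics of Section \ref{sub:ldap}, so Theorem \ref{mickey} yields convergence of the rescaled height to the solution $\phi(x,\epsilon)$ of the Dirichlet heat equation $\partial_t\phi=\tfrac14\partial_x^2\phi$ with initial datum the relevant piece of $f_0$. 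Since $f_0$ satisfies \eqref{eq:eqdif1}, a Taylor expansion in $\epsilon$ gives
\[
\phi(x,\epsilon)=f_0(x)+\tfrac{\epsilon}{4}\partial_x^2 f_0(x)+O(\epsilon^2)=f_0(x)+\epsilon\alpha\bigl(-f_0(x)+x\partial_x f_0(x)\bigr)+O(\epsilon^2),
\]
which is exactly the first-order expansion in $\epsilon$ of the homothetically contracted profile $(1-\alpha\epsilon)f_0\bigl(x/(1-\alpha\epsilon)\bigr)$. Choosing the corridor depth large in microscopic units but small on the macroscopic scale, and $\epsilon$ small enough that $O(\epsilon^2)\ll\delta\epsilon$, yields the desired bound on $I$.

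On an arc near a pole, $\partial\mathscr{D}$ has nearly horizontal or nearly vertical tangent and the interface inside a suitable rectangular box is nearly flat. There the confined dynamics is the zero-range-type dynamics of Section \ref{sub:dap}, and Corollary \ref{spo2} sandwiches the rescaled height between two linear heat-equation solutions at times $\epsilon$ and $(1+\eta)^{-2}\epsilon$, where $\eta$ is the maximal slope on the polar arc. By shrinking the polar arc one makes $\eta$ arbitrarily small, so up to $o(\epsilon)$ both bounds collapse to the same heat-equation prediction as for the diagonal arcs, and \eqref{eq:eqdif1} again identifies it with the first-order homothetic contraction of $f_0$.

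The local bounds are then glued by monotonicity. For \eqref{hok} one chooses in each corridor a supersolution (an initial configuration that is pointwise $\le\sigma(0)$, with a slightly thicker layer of $-$ spins along $\partial(L\mathscr{D})$), whose evolution after time $\epsilon L^2$ is contained in $(1-\epsilon(\alpha-\delta))L\mathscr{D}\cap R$ with high probability by the local analysis; the union of the corresponding envelopes covers $\mathcal{A}_L(\epsilon L^2)$. For \eqref{hic} the symmetric subsolution argument works. The main obstacle I expect lies at the four poles: $f_0$ has only Lipschitz (not $C^1$) curvature there, and the true local interface equation is genuinely nonlinear; Corollary \ref{spo2}, which trades slope for a small multiplicative error in the time variable, is the crucial input, and one must fix $K=K(\delta)$ large \emph{before} letting $L\to\infty$ so that each arc corresponds to a macroscopic portion of $\partial(L\mathscr{D})$ and the $o(1)$ local errors do not accumulate under patching.
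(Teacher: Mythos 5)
Your overall strategy is the paper's: split the boundary into a bounded number of arcs, classify each as polar or non-polar, apply Theorem~\ref{mickey} (diagonal frame, corner-flip/SSEP) on the non-polar arcs and Corollary~\ref{spo2} (zero-range) on the polar arcs, expand the local heat-equation solution to first order in $\gep$ and match it against the homothetic contraction of $f_0$ via~\eqref{eq:eqdif1}, then glue by monotonicity. The paper specializes the decomposition using the $\pi/2$ symmetry of $\DDD$ (only two arc types, $M$ and $N$, up to rotation), but the arc-by-arc idea is essentially the same. However, two steps are asserted without justification, and both are precisely the technical obstacles the paper works hardest to tame.

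First, you claim that freezing a thin outer layer of ``$-$'' spins and a thin inner layer of ``$+$'' spins ``by monotonicity'' sandwiches the true evolution between two confined dynamics. Only one of those two freezings is a monotonicity operation. For the upper bound~\eqref{hok}, freezing ``$-$'' spins (adding minuses) is allowed by the graphical coupling of Section~\ref{sub:gcm}; but freezing ``$+$'' spins to ``$+$'' reduces the set of minuses, which goes in the \emph{wrong} direction for an upper bound on $\mal$, and is therefore not a monotonicity argument. In the paper that step is legitimate only because of the a priori continuity estimate, Proposition~\ref{trucrelou} (specifically~\eqref{tip} for the upper bound, and~\eqref{top}/\eqref{trucbien} for the lower bound), which says that w.h.p.\ those spins would stay ``$+$'' (resp.\ ``$-$'') anyway for the times of interest, so the freeze costs only an event of vanishing probability. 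Your proposal never invokes Proposition~\ref{trucrelou}; without it the sandwiching is unjustified.

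Second, you assert that the confined dynamics has, ``at all times, the graph of a height function'' as its interface. Confinement to a corridor does not prevent the ``$-$'' droplet from disconnecting (see Figure~\ref{fig:interditmove}), which is the central obstruction stated in Section~\ref{sub:dlm}. The paper deals with it by introducing two distinct modified dynamics that can be compared to the true one by monotonicity: for the upper bound, a dynamics $\sigma^{(2)}$ which \emph{discards} disconnecting moves — and one must check that, because the initial profile is chosen concave with a single change of monotonicity, those erased moves are always of ``$-\to+$'' type, so $\mal\subset\mal^{(2)}$, cf.~\eqref{eq:fields}; for the lower bound, a dynamics $\sigma^{(3)}$ that instantaneously flips any spin with three ``$+$'' neighbors, together with a careful identification with the zero-range process (including deleting one maximal column to fix the annihilation rule). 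Your proposal leaves this entire layer out. A related, smaller issue: the corridor depth must dominate the macroscopic drift $O(\sqrt\gep\, L)$ of the interface over time $\gep L^2$, so ``large microscopically but small macroscopically'' is not quite the right regime; one needs a depth that is $\gg\sqrt\gep\, L$ (the paper simply uses full quadrants / strips of width $O(\xi L)$, with $\xi$ small but fixed as $L\to\infty$).

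So: right skeleton, correct identification of the local PDEs and of the role of~\eqref{eq:eqdif1} and of the slope parameter in Corollary~\ref{spo2}, but the proof as written has genuine gaps at the two places where the paper has to do the real work — the one-sided-only validity of freezing-by-monotonicity (remedied by Proposition~\ref{trucrelou}) and the disconnection of the minus set (remedied by the auxiliary dynamics $\sigma^{(2)}$ and $\sigma^{(3)}$).
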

The second proposition  controls continuity in time of the
rescaled motion:
\begin{proposition}\label{trucrelou}
For every $\gd>0$, w.h.p., 
\begin{equation}\label{tip}
\mathcal{A}_L(L^2 t) \subset (1+\gd)L\mathscr D
\text{\;for every\;} t\ge0.
\end{equation}
Moreover, for every $\gd>0$ there exists $\gep>0$ such that  w.h.p 
\begin{equation}\label{top}
  \mathcal{A}_L(L^2 t) \supset (1-\gd)L\mathscr D
\text{\;for every\;} t\in[0,\gep].
\end{equation}
\end{proposition}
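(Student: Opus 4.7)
My plan is to combine monotonicity (Section \ref{sub:gcm}) with the local hydrodynamic limits of Theorem \ref{mickey} and Corollary \ref{spo2} to obtain a short-time continuity estimate on the rescaled interface, and then to bootstrap \eqref{tip} to every $t\ge 0$ by iterated application of Proposition \ref{mainprop}. The underlying mechanism is that on a time window of length $\gep L^2$, the rescaled interface is locally described by a diffusive PDE and therefore cannot drift by more than $O(\sqrt{\gep})$.

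For the short-time part of \eqref{tip}, I would work on $[0,\gep]$ with $\gep=\gep(\gd)$ small and use monotonicity to dominate the initial configuration by the one having $\sigma_x(0)=-1$ on $(1+\gd/3)L\mathscr D\cap(\Z^*)^2$. I then cover $\partial((1+\gd/3)L\mathscr D)$ by a finite collection of overlapping coordinate windows of two types: (i) those away from the four poles, in which after a $\pi/4$ rotation the interface is locally the graph of a Lipschitz function and can, via monotonicity, be stochastically dominated by the corner-flip dynamics of Section \ref{sub:ldap}, so that Theorem \ref{mickey} applies and produces a limiting heat equation; (ii) those around each pole, in which the interface is locally a concave profile of small slope, dominated by the zero-range dynamics of Section \ref{sub:dap}, to which Corollary \ref{spo2} applies. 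In both cases the limiting PDE is continuous in time, so for $\gep$ small enough the rescaled interface stays within $\gd L/3$ of its initial position uniformly on $[0,\gep]$. A union bound over the finitely many windows then yields $\mal(L^2 t)\subset(1+\gd)L\mathscr D$ for $t\in[0,\gep]$ with high probability. The lower bound \eqref{top} follows from exactly the same local argument applied to the sub-configuration with $\sigma_x(0)=-1$ only on $(1-\gd/2)L\mathscr D\cap(\Z^*)^2$, which by monotonicity is dominated by the true dynamics from below.

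To extend \eqref{tip} to $t\ge\gep$, I iterate as follows: Proposition \ref{mainprop} with $\gd'=\alpha/2$ yields $\mal(L^2\gep)\subset(1-\gep\alpha/2)L\mathscr D$ w.h.p., so by monotonicity I may restart from the configuration with $\sigma_x(0)=-1$ on $(1-\gep\alpha/2)L\mathscr D$, which is the setting of Proposition \ref{mainprop} at scale $L_1=(1-\gep\alpha/2)L$. Applying short-time control plus Proposition \ref{mainprop} at scale $L_1$ gives containment in $(1-\gep\alpha/2)^2 L\mathscr D$ at time $\gep(L^2+L_1^2)$, and iterating produces a geometrically contracting sequence of dominating droplets whose total time-extent is bounded by a constant multiple of $L^2$. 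Since each dominating droplet has scale $\le 1\le 1+\gd$ and the short-time growth $O(\sqrt{\gep}L)$ of the interface is smaller than $\gd L$ for $\gep$ small enough, concatenating these inclusions yields $\mal(L^2 t)\subset(1+\gd)L\mathscr D$ for every $t\ge 0$ w.h.p.\ (for $t$ beyond the finite total iteration time the dominating droplet is empty and the inclusion is trivial).

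The principal obstacle, I expect, is twofold. First, the patching of local windows: the hydrodynamic limits are stated for a single window with a smooth initial profile, whereas one needs to control the interface simultaneously along the entire boundary $\partial L\mathscr D$. Second, the true microscopic interface is not (even locally) the graph of a function, so the stochastic comparison with the graph-valued SSEP/zero-range dynamics must be carried out through modified dynamics that forbid loop-creating moves, cf.\ Figure \ref{fig:interditmove} and Section \ref{sub:dap}. Both issues require delicate monotonicity arguments building on the graphical construction of Section \ref{sub:gcm}.
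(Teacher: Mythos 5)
Your short‐time control of \eqref{tip} is in the right spirit, but the proposal has two substantive problems, the first of which is a genuine logical gap.

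\textbf{Circularity.} You extend \eqref{tip} from $[0,\gep]$ to all $t\ge\gep$ by iterating Proposition~\ref{mainprop}. But in the paper the proof of Proposition~\ref{mainprop} \emph{uses} Proposition~\ref{trucrelou}: the proof of \eqref{al:2steps2} (the near‐pole part of \eqref{hok}) begins by freezing the spins in $L(J^1\cup J^2)$ to ``$+$'' and justifies this by invoking \eqref{tip}, and the proof of \eqref{hic} relies on \eqref{trucbien}, which is a consequence of \eqref{top}. So invoking Proposition~\ref{mainprop} inside a proof of Proposition~\ref{trucrelou} is circular. The paper sidesteps the iteration entirely by a single global comparison: freeze the cross $V_1\cup V_2$ to ``$-$'', so that each quadrant's $-$/$+$ boundary becomes a Young diagram whose rescaled profile converges (Theorem~\ref{mickey}) to the heat‐equation solution $g(\cdot,t)$ with concave initial data and fixed endpoints. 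Because such a $g$ is \emph{decreasing in $t$}, the bound holds uniformly on $[0,T]$ for any fixed $T$ without any restart, and for $t>T$ one appeals to the $O(L^2)$ disappearance‐time bound of~\cite{FSS}. This monotonicity of the comparison interface is the key observation that your proposal is missing.

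\textbf{Treatment of \eqref{top}.} You assert that ``the lower bound \eqref{top} follows from exactly the same local argument,'' but this is where the graph‐representation obstruction you name at the end actually bites. For the lower bound one cannot simply dominate from below by a graph‐valued SSEP or zero‐range process over a patched cover of $\partial\mathscr D$, because the droplet can fragment and the required stochastic comparisons with the non‐loop‐creating dynamics go in the wrong direction; resolving this would essentially require redoing the full local‐comparison machinery of Sections~\ref{sec:45} and~\ref{sec:pohic}, which are reserved for Proposition~\ref{mainprop}. The paper instead proves \eqref{top} by a completely different, volumetric argument in the style of~\cite{cf:CMST}: introduce the modified dynamics $\tilde\sigma$ that instantly flips any ``$-$'' spin with three ``$+$'' neighbours and stops at $\tilde\tau_{D,D'}$, observe deterministically that reaching $D$ before escaping $D'$ requires creating at least $c\delta^2 L^2$ new ``$+$'' spins, and combine this with the uniformly bounded rate of increase of the ``$+$'' count (and with \eqref{tip}, already proved, to identify $\tau_{D,D'}$ with $\tau_D$). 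Your proposal gives no substitute for this step.

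The high‐level idea of your short‐time argument (monotone domination plus the diffusive scaling limit controls the interface drift) is sound and is indeed what drives the paper's proof of \eqref{tip}, but the extension to all $t$ and the entirety of \eqref{top} need to be reworked along the lines indicated above.
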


\begin{proof}[Proof of Theorem \ref{mainres} assuming Propositions \ref{mainprop} and \ref{trucrelou}]
Given $\eta$ fix $\gd$ small enough and $\gep<\gep_0(\delta)$.
 Then using \eqref{hok} 
 one gets that w.h.p.
 \begin{equation}\label{touspass}
  \mathcal{A}_L(L^2 \gep) \subset (1-(\alpha-\gd)\gep)L\mathscr D.
 \end{equation}
 Let $(\mathcal{A}^{(1)}_L(L^2t))_{t\ge 0}$ denote the evolution of
 the set of "$-$" spins for the dynamics started from initial condition
 "$-$" on $(1-\gep(\alpha-\gd))L\mathscr D$
 and "$+$" elsewhere.  Then using the Markov property and monotonicity
 of the dynamics, one can couple the dynamics
 $(\mathcal{A}_L(L^2(\gep+t)))_{t\ge 0}$ and
 $(\mathcal{A}^{(1)}_L(L^2t))_{t\ge 0}$ such that on the event
 \eqref{touspass}
 \begin{equation}
 \mathcal{A}_L(L^2(\gep+t))\subset \mathcal{A}^{(1)}_L(L^2t), \quad \text{for
every}\;\; t\ge 0.
 \end{equation}
 Therefore, after conditioning to the event in \eqref{touspass} and using \eqref{hok} for $(1-(\alpha-\gd)\gep) L$ instead of $L$, one gets that w.h.p.:
 \begin{equation}
   \mathcal{A}_L(L^2\gep(1+(1-(\alpha-\gd)\gep)^2))\subset \mathcal{A}^{(1)}_L(L^2(1-(\alpha-\gd)\gep)^2\gep)\subset (1-(\alpha-\gd)\gep)^2 L\mathscr D.
 \end{equation}
Here we used the fact that $\mathcal A^{(1)}_L(t)$ has the same law as
$\mathcal A_{L(1-\alpha\varepsilon)}(t)$.
Using this argument repeatedly one gets that,
w.h.p.,
for all $k\in [1,\gep^{-3/2}]$
\begin{equation}
\mathcal{A}_L(L^2 t_k)\subset   (1-(\alpha-\gd)\gep)^k L\mathscr D
\end{equation}
where $t_k$ is defined by 
\begin{equation}
t_k:=\gep \sum_{i=0}^{k-1} (1-(\alpha-\gd)\gep)^{2i}=\gep \frac{1-(1-(\alpha-\gd)\gep)^{2k}}{1-(1-(\alpha-\gd)\gep)^2}.
\end{equation}
Here and in the sequel we assume
$\gep^{-3/2}$ to be in $\N$. 
The value $\varepsilon^{-3/2}$ could 
equally well be replaced by any number $k_f$ much larger than
$1/\varepsilon$: the only thing that matters is that 
$t_{k_f}$ is close to $1/(2(\alpha-\delta))$.
One remarks that for all values of $k$
\begin{equation}\begin{split}
(1-(\alpha-\delta)\gep)^k&=\sqrt{1-\frac{t_k(1-(1-(\alpha-\delta)\gep)^2)}{\gep}}\\
					&=\sqrt{1-2 (\alpha-\delta)t_k +t_kO(\gep)}.
\end{split}\end{equation}
As $(t_k)_{k\geq 0}$ is bounded above, there exists $C>0$ such that  for every $k\in [1,\gep^{-3/2}]$
\begin{equation}\label{corco}
\mathcal{A}_L(L^2t_k)\subset \left(\sqrt{1-2(\alpha-\gd)t_k +C\gep}\right)L\mathscr D\subset (\sqrt{1-2\alpha t_k}+\eta/2)L\mathscr D
\end{equation}
w.h.p. where the second inclusion holds  provided that $\gep$ and $\gd$ are small enough. 
 Combining \eqref{corco}, 
Proposition
 \ref{trucrelou} and stochastic coupling, one gets w.h.p. that, for every $k\in[0,\gep^{-3/2}]$ and $t\in (t_k,t_{k+1})$, 
\begin{equation}
  \mathcal{A}_L(L^2 t)\subset (\sqrt{1-2\alpha t_k}+(3\eta/4))L\mathscr D\subset (\sqrt{1-2\alpha t}+\eta)L\mathscr D
\end{equation}
and that, w.h.p., for every $ t\ge t_{\gep^{-3/2}}$
\begin{equation} \label{biendf} 
  \mathcal{A}_L(L^2 t)\subset (\sqrt{1-2\alpha
    t_{\gep^{-3/2}}}+(3\eta/4))L\mathscr D\subset \eta L \mathscr D.
\end{equation}
This  ends the proof of the upper inclusion in \eqref{eq:theorem}
(note that $t_{\gep^{-3/2}}$ approaches $1/(2\alpha)$ for $\gep,\delta$ small).
Moreover, 
 \eqref{biendf} and stochastic domination implies that, for some constant $C$, w.h.p.
\begin{equation}
\tau_+\le \frac{L^2}{2\alpha}(1+C\eta^2).
\end{equation} 
Indeed, it is known from \cite{FSS} that 
a droplet of minus spins of linear size $\eta L$
disappears within a time $\tau_+$  which w.h.p. is upper bounded by
$C\eta^2L^2$. 

The lower inclusion in \eqref{eq:theorem} and the lower bound on
$\tau_+$ are proved in an analogous way using \eqref{hic} instead of
\eqref{hok} and \eqref{top} instead of \eqref{tip}. Note that using \eqref{top} we have to take care to choose $\varepsilon$ small enough
but it is possible as $t_k-t_{k-1}$ is a non-increasing function of $k$ and $1/\varepsilon$.

\end{proof}

\subsection{Strategy of the proof of Proposition \ref{mainprop}}
Our aim is to use Theorem \ref{mickey} to control the motion of the interface away from the ``poles''
and Theorem \ref{spo} (or more precisely Corollary \ref{spo2})
to control the  motion of the interface close to the ``poles''. 
It is therefore crucial to compare the local SSEP 
or the zero-range dynamics introduced in Sections \ref{sub:ldap} and \ref{sub:dap} 
to the true evolution of the boundary between "$+$" and "$-$" spins. 

As we have already discussed at the beginning of  Section \ref{sub:dlm}, however, there exists no
exact mapping between the evolution of the height function associated
to the two particle processes and the evolution of the $+/-$ boundary,
since the original ``$-$'' droplet can break
into more droplets and, strictly speaking, the interface cannot be
described, even locally, as a height function. The way out is that,
thanks to monotonicity arguments and to the \textit{a priori} ``continuity'' information provided by
Proposition \ref{trucrelou}, we can remove certain updates of the
Markov Chain,
e.g. freeze certain spins to their initial value. This way, we can
show that locally the interface can be stochastically compared to the
height function associated to the SSEP (or to the zero-range process close to the poles). Of course, the detail
of the ``update removal procedure'' is quite different according
to whether we want to prove an upper or a lower bound on the ``$-$
domain''. For instance, if we want an upper bound we are allowed to
freeze ``$-$'' spins or to change some ``$+$'' into ``$-$'' spins in the initial
condition  (this is fine thanks to monotonicity) and at the same time
we can freeze the spins
outside $(1+\delta)L\mathscr D$ to  ``$+$'' (this is not allowed
directly by monotonicity, but \eqref{tip} guarantees that such spins
stay ``$+$'' for all times anyway, w.h.p.). If the ``update removal procedure'' is
performed suitably, the effect is that the various portions of the
$+/-$ interface (away from and close to the poles) then become
independent and evolve \emph{exactly} like the height functions of the
SSEP/zero-range process.

The approach outlined here will be also used in Section \ref{sec:thconv} in the
case with general convex initial condition (the generalization of
Proposition \ref{mainprop} is 
 Proposition \ref{mainpropc}).

\subsection{Upper Bound: proof of \eqref{hok} and \eqref{tip} }
\label{upb}

 The inclusion \eqref{hok} can be rewritten in the following manner,
 which is more convenient for the proof:
 for any positive $\gd$, for all $\gep$ small enough, w.h.p.
 \begin{equation}\label{manierdevoir}
 \sigma_x(\gep L^2)=+
\text{\;for every\;} x\in \left[(1-\gep(\alpha-\gd))L\mathscr D\right]^c.
 \end{equation}

 Given $\gd$, we fix a value of $\xi$ which is small enough (depending
 on $\gd$ in a way that is specified in Section \ref{sec:45}) and set
(cf.\ Figure \ref{label}) 
\begin{equation}\begin{split}
\label{MN}
M(\gep,\xi)&:= \{(x,y) \in \bbR^2,\  x\geq \xi \textrm{ and } y\geq \xi  \} 
\setminus \left[(1-\gep(\alpha-\gd))\mathscr D\right]\\
N(\gep,\xi)&:=\{(x,y)\in \bbR^2,\  y\geq 0 \textrm{ and } -\xi \leq x\leq \xi \}
\setminus \left[(1-\gep(\alpha-\gd))\mathscr D\right].
\end{split}\end{equation}
Remark that for any $\gep>0$, $M,N$ and their successive images by
rotation of angle $\pi/2$, $\pi$, $3 \pi/2$ form an $8$-piece cover of
the complementary set $[(1-\gep(\alpha-\gd))\mathscr D]^c$.

 \begin{figure}[hlt]

\leavevmode
\epsfxsize =8 cm
\psfragscanon 

\psfrag{Nepsilonxsi}{$N(\gep,\xi)$}
\psfrag{Mepsilonxsi}[c]{$M(\gep,\xi)$}
\psfrag{xsi}{$\xi $}
\psfrag{-xsi}{$-\xi $}
\psfrag{epsilon}{\small{$(1-\gep(\alpha-\delta))\mathscr{D}$}}
\psfrag{D}{$\mathscr{D}$}

\epsfbox{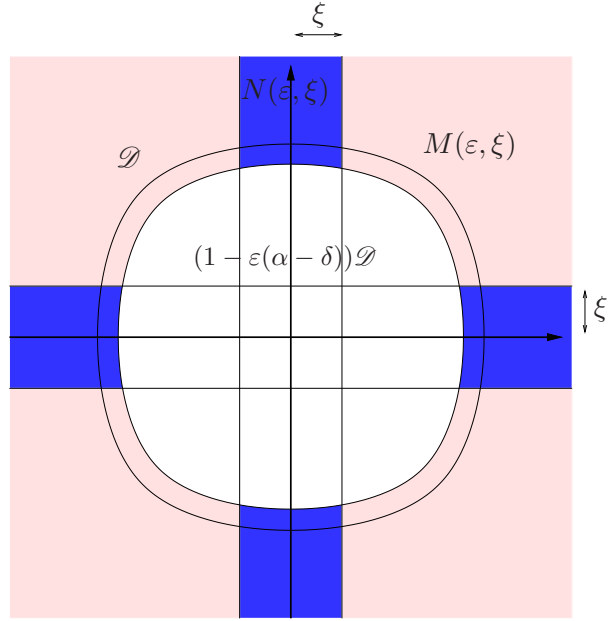}
\begin{center}
\caption{
The light-colored (resp. dark-colored) zones correspond  $M(\gep,\xi)$
(resp. $N(\gep,\xi)$) and its rotations. Together, they form a
partition of the complement of $(1-\gep(\alpha-\delta))\DDD$ (white
central region).}
\label{label}
\end{center}
\end{figure}

 As the dynamics and the initial shape are invariant under these same
 rotations, \eqref{manierdevoir} is proved if we can show that for
 $\gep$ small enough, w.h.p.
\begin{align}
\sigma_x(\gep L^2)&=+\quad \text{for every\;} x\in L M(\gep,\xi)\label{al:2steps1}\\
\sigma_x(\gep L^2)&=+\quad \text{for every\;} x\in L N(\gep,\xi).\label{al:2steps2}
\end{align}
The above new formulation of \eqref{hok} is very convenient as it
allows to consider separately the 
dynamics close to the poles and away from them.

\subsubsection{Proof of \eqref{al:2steps1}} 
\label{nunne}
For any $L>0$, we consider the dynamics which has initial condition
with "$-$" spins in $L\mathscr D$ and "$+$" otherwise, and the
same generator as the original dynamics except that spins on the sites in 
$V_1:=\{\pm \frac{1}{2}\}\times \{-L+\frac{1}{2},\cdots,L-\frac{1}{2}\}$
and on  $V_2:= \{-L+\frac{1}{2},\cdots,L-\frac{1}{2}\}\times \{\pm
\frac{1}{2}\}$ are ``frozen to $-$''. (The construction of the
dynamics is the same as in Section \ref{sub:gcm}, except that there is
no update for these sites).  We denote $(\sigma^{(1)}_L(t))_{t\ge 0}$
the evolution of this dynamics and define 
\begin{equation}
\mal^{(1)}(t):=\bigcup_{\{x:\ \sigma^{(1)}_x(t)=-1\}}\mathcal{C}_x.
 \end{equation}
The graphical construction of Section \ref{sub:gcm} gives a natural coupling of $\sigma$ and $\sigma^{(1)}$:
\begin{equation}
\label{eq:monotonie1}
\mal(t) \subset \mal^{(1)}(t),\quad \text{for every\;\;} t\geq 0.
\end{equation}
The advantage of the ``freezing procedure'' is that then the evolution in 
the four quadrants of $(\bbZ^*)^2$ becomes independent. The reason is that the spins on sites $(\{\pm \frac{1}{2}\}\times\bbZ^*)\setminus V_1$ and  $ (\bbZ^* \times \{\pm
\frac{1}{2}\})\setminus V_2$ are ``$+$'' for all times
(recall that the spins outside the smallest square containing the
initial ``$-$'' droplet stay ``$+$'' forever) so the boundary spins of all four quadrants
are frozen. 

The set $\mal^{(1)}(t)\cap \R_+^2$ is a Young diagram (i.e.\ a
collection of vertical columns of width $1$ and non-negative integer
heights, with heights non-increasing from left to right) for all $t\ge
0$ and we can thus consider $\partial \mal^{(1)}(t)\cap \R_+^2$ as the
graph of a (random) piecewise affine function in the coordinate system
$(\bff_1,\bff_2)$, that we denote by $F_L(\cdot,t)$.  Equation
\eqref{al:2steps1} is thus proved (for any choice of $\xi$) if one
proves that for any $\nu<2^{-1/2}$, for any $\gep$ small enough,
w.h.p.  
\begin{equation}
\label{eq:inefonctions}
F_L(x,\gep L^2)\le L f(x/L, (\alpha-\gd)\gep) \text{\; for every\;} x\in (-\nu L,\nu L),
\end{equation}
where $f(\cdot,t)$ is the function whose graph in the coordinate
system $(\bff_1,\bff_2)$ is given by the intersection of the
boundary of $({1-t})\mathscr D$ 
with the half-plane $\{(x,y)\in\R^2,\ (x,y)\cdot \bff_2 \geq 0\}$ (the
domain of definition of $f(\cdot,t)$ depends on $t$ but includes
$[-2^{-1/2},2^{-1/2}]$ for $t$ small enough). By definition of $\mathscr D$, one has
$f(\cdot,0)=f_0(\cdot)$ 
(recall the definition of $f_0$ in \eqref{eq:soleqdif1}). 

In practice, to prove
\eqref{al:2steps1} one has to prove \eqref{eq:inefonctions} with $\nu$
such that $1/\sqrt2-\nu=\xi/\sqrt2+o(\xi)$ for $\xi$ small (with $\xi$
as in \eqref{MN}). The reason
is that the point of $\partial\mathscr D$ with horizontal coordinate
$\xi$ and positive vertical coordinate (in the coordinate system $(\bf e_1,
\bf e_2)$) has horizontal coordinate
$-(1-\xi)/\sqrt 2+o(\xi)$ in the $(\bff_1,\bff_2)$ coordinate system.


As explained in  Remark \ref{121} and on Figure \ref{fig:coresis2}, the function $F_L(\cdot,t)$, up to  space rescaling (by a factor $\sqrt{2}$) undergoes the corner flip-dynamics of Theorem \ref{mickey}. 
Thus the scaling limit of $F_L$ satisfies the heat-equation or more precisely we have the following convergence in probability for every fixed $T>0$:
\begin{equation}
\label{eq:limithydro1}
\lim_{L\to \8}\sup_{x\in[-\frac{1}{\sqrt{2}},\frac{1}{\sqrt{2}}]}\sup_{t\leq T}\left|\frac{1}{L}F_L(xL,tL^2)-g(x,t)\right|=0
\end{equation}
where $g$ is the solution for $t\ge0$ and $x\in(-1/\sqrt 2,1/\sqrt 2)$ of
\begin{equation}
\begin{cases}
\partial_t g(x,t)=\frac{1}{4}\partial^2_x g(x,t) 
\\
 \label{eq:edp1} g(\cdot,0)=f_0(\cdot)
\\
 g(-\frac{1}{\sqrt{2}},t)=g(\frac{1}{\sqrt{2}},t)=\frac{1}{\sqrt{2}}
.
\end{cases}
\end{equation}

Note that the above result plus equation \eqref{eq:monotonie1}, plus
the fact that $g$ is decreasing in $t$ (since it stays concave through
time) gives \eqref{tip} of Proposition \ref{trucrelou} for every $
t\le T<\infty$.  Moreover, according to \cite[Theorem 1.3]{FSS}, 
the disappearance time $\tau_+$ of the minus-droplet is $O(L^2)$ with
high probability, so that \eqref{tip} also holds for $ t> T$ provided
that $T$ was chosen large enough. As a byproduct, we have proven \eqref{tip}.

Concerning \eqref{al:2steps1}, in order
to prove \eqref{eq:inefonctions} we are reduced to show that for
every $\nu\in(0,1/\sqrt2)$ and every $x\in (-\nu,\nu),$ 
\begin{equation}\label{cczzcc}
g(x,\gep)<f(x,(\alpha-\gd)\gep).
\end{equation}

This is a consequence of the way $f_0$ was determined (see
\eqref{eq:eqdif1} and discussion in Section
\ref{subsec:asis}). First we notice that the time
derivative of $g$ is uniformly continuous away from the boundary points $\pm 1/\sqrt2$:
\begin{lemma}
\label{lem:contilaplacien}
For any $0<\nu<\frac{1}{\sqrt{2}}$,
\begin{equation}
\label{eq:contilaplacien}
\lim_{t\to 0} \sup\{| \partial_t g(x,s)-\partial_t g(x,0)|, s\in[0,t]  \textrm{ and } x\in[-\nu,\nu]  \}=0.
\end{equation}
\end{lemma}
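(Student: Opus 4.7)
The plan is to reduce to a classical regularity result for the heat equation on $\R$. First I would set $\tilde g := g - 1/\sqrt{2}$, so that $\tilde g$ satisfies the same equation $\partial_t \tilde g = (1/4)\partial_x^2 \tilde g$ on $(-1/\sqrt{2}, 1/\sqrt{2})$ but with \emph{homogeneous} Dirichlet boundary conditions and smooth initial datum $\tilde f_0 := f_0 - 1/\sqrt{2}$, which vanishes at the endpoints. Since $\partial_t g = \partial_t \tilde g$, it suffices to prove the continuity statement for $\tilde g$.

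Next, I would compare $\tilde g$ with the free-space heat solution $\bar g(x,t) := (G_t * \bar f_0)(x)$, where $G_t$ is the Gaussian heat kernel on $\R$ and $\bar f_0 \in C_c^{\infty}(\R)$ is any compactly supported smooth extension of $\tilde f_0$ that agrees with $\tilde f_0$ on some neighborhood of $[-\nu, \nu]$. By integration by parts (using that $\bar f_0$ is compactly supported), one obtains $\partial_t \bar g(x,t) = (1/4)(G_t * \bar f_0'')(x)$, which converges uniformly on $[-\nu, \nu]$ to $(1/4)\bar f_0''(x) = (1/4)f_0''(x)$ as $t \to 0$ by the classical property that convolution with the heat kernel of a continuous function converges uniformly on compact sets. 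In particular $\partial_t g(x,0) = (1/4)f_0''(x)$.

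The remaining piece is to control the error $h := \tilde g - \bar g$. On $(-1/\sqrt{2}, 1/\sqrt{2})$, the function $h$ satisfies the heat equation with zero initial datum and with boundary values $\psi^{\pm}(t) := -\bar g(\pm 1/\sqrt{2}, t)$. Since $\bar g$ is smooth and $\bar g(\pm 1/\sqrt{2}, 0) = \tilde f_0(\pm 1/\sqrt{2}) = 0$, we have $|\psi^{\pm}(t)| \leq Ct$ for small $t$. Set $d := 1/\sqrt{2} - \nu > 0$. Then standard interior parabolic estimates, applied to $h$ in the space-time strip bounded away from the lateral boundary, yield
\[
\sup_{s \in [0,t]} \sup_{x \in [-\nu,\nu]} |\partial_x^2 h(x,s)| \;\leq\; C_d \sup_{s \in [0,t]} |\psi(s)| \;\longrightarrow\; 0 \quad \text{as } t \to 0.
\]
Putting the two pieces together and using $\partial_t = (1/4)\partial_x^2$ to convert time derivatives into space derivatives gives the desired uniform convergence $\partial_t g(x,s) \to (1/4)f_0''(x) = \partial_t g(x,0)$.

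The \textbf{main obstacle} is the interior regularity bound for $h$ near $t=0$: although the boundary data $\psi^{\pm}$ vanish at $t=0$, one must argue that their effect on second spatial derivatives in the interior is controlled by $\sup_{s \leq t} |\psi(s)|$. The conceptual point is that for a point at distance $\geq d$ from the endpoints, the Dirichlet-to-Neumann kernel relating boundary values to interior derivatives is analytic and exponentially decaying in small time lag, so the boundary contribution to $\partial_x^2 h$ is both smooth and uniformly small as $t \to 0$. This can be justified either via the method of images on the interval (writing the Dirichlet kernel as a periodized sum of free kernels), or via a standard parabolic Schauder estimate on the subdomain $[-\nu - d/2, \nu + d/2] \times [0, t]$ combined with the sup-bound $|h| \leq Ct$ coming from the maximum principle.
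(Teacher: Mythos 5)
Your argument is correct, but it takes a genuinely different route from the paper's. The paper works probabilistically: it uses the stochastic representation $\partial_x^2 g(x,t)=E_x\left[\partial_x^2 f_0(B_t)\mathbf{1}_{\{t<T\}}\right]$, where $B$ is Brownian motion and $T$ its exit time from $(-1/\sqrt{2},1/\sqrt{2})$ (this is the Feynman--Kac formula for $u:=\partial_x^2 g$, which solves the heat equation with \emph{zero} Dirichlet data because $g$ is held constant on the lateral boundary), and then concludes from the uniform continuity of $\partial_x^2 f_0$ together with the fact that $P_x(T\le t)\to 0$ uniformly for $x\in[-\nu,\nu]$. You instead compare $g$ with a free-space heat solution $\bar g$ built from a compactly supported extension of the initial datum, and control the correction $h=\tilde g-\bar g$ by an interior parabolic estimate and the maximum principle. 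This is more elementary (no probability) but costs an extra comparison step and some regularity machinery. Two points to tighten: (1) for $\psi^\pm(0)=0$ you need $\bar f_0(\pm 1/\sqrt 2)=0$, so the extension should be required to agree with $\tilde f_0$ on all of $[-1/\sqrt 2,1/\sqrt 2]$, not merely on a neighborhood of $[-\nu,\nu]$; (2) to get an interior Schauder constant that does not degenerate as $t\to 0$, you should first extend $h$ by zero to $t<0$ --- since $h(\cdot,0)=0$, the extension is a distributional, hence by hypoellipticity a classical, solution of the heat equation in the interior --- so that the only parabolic boundary one must stay away from is the lateral one. With these fixes the maximum principle gives $|h|\le \sup_{s\le t}|\psi^\pm(s)|=O(t)$ and the interior estimate gives $|\partial_x^2 h|=O(t)$ uniformly on $[-\nu,\nu]$, which is the claim. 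The paper's representation via killed Brownian motion encodes the interior/boundary decoupling in one line and is therefore shorter; your decomposition is preferable if one wants a purely PDE argument.
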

\begin{proof}[Proof of Lemma \ref{lem:contilaplacien}]
  This is well known but we sketch a probabilistic proof for the sake of
  completeness. Let $(B_t)_{t\geq 0}$ denote a standard Brownian
  motion starting at $x\in [-2^{-1/2},2^{-1/2}]$ (with the
  associated expectation denoted by $E_x$) and let $T$ denote the hitting time of $\{\pm1/{\sqrt{2}}\}$.  One has
\begin{equation}
 \partial^2_x g(x,t)=E_x\left[\partial^2_x f_0(B_t){\bf 1}_{\{t<T\}}\right].
 \end{equation}  
We can thus rewrite \eqref{eq:contilaplacien} as 
\begin{equation}
\label{eq:contibrownien}
\lim_{t\to 0} \sup\{|E_x(\partial^2_xf_0(B_s)-\partial^2_xf_0(x))|,s\in[0,t]  \textrm{ and }  x\in[-\nu,\nu]  \}=0
\end{equation}
and we can conclude using the uniform continuity of $\partial^2_xf_0$ on $[-\frac{1}{\sqrt{2}},\frac{1}{\sqrt{2}}]$ and well-known continuity properties
of the Brownian motion. Remark that 
\eqref{eq:contilaplacien} would not hold with $\nu=1/\sqrt2$ because of boundary effects: for $t>0$ one has that $\partial^2_x g(x,t)$ approaches zero 
as $x$ approaches $\pm1/\sqrt2$, since $P_x(T>t)\to 0$ when $x\to \pm1/\sqrt2$.
\end{proof}

Therefore, for every   $\eta>0$ arbitrarily small we have for all $x$ in $(-\nu,\nu)$, if  $\varepsilon$ is small enough,
\begin{equation}
g(x,\gep)< f_0(x)+\gep\left(\frac{1}{4}\partial^2_x f_0 (x)+\eta\right).
\end{equation}
We are left with proving that for $\gep$ small enough and $x$ in $(-\nu,\nu)$
\begin{equation}
 f_0(x)+\gep\left(\frac{1}{4}\partial^2_x f_0 (x)+\eta\right)<f(x,(\alpha-\gd)\gep).
\end{equation}
From the definition of $f(\cdot,t)$ as the graph in $({\bf f}_1,{\bf f}_2)$ of the boundary of
$({1-t})\mathscr D$, we get that if $\gep$ is small enough, uniformly for all $x\in (-\nu,\nu)$,
\begin{gather}
f(x,(\alpha-\gd)\gep)={[1-(\alpha-\gd)\gep]} f\left(\frac{x}{{1-(\alpha-\gd)\gep}},0\right)\\=f_0(x)+(\alpha-\gd)\gep \left(x\partial_x f_0(x) -f_0(x)\right)+O(\gep^2). 
\end{gather}
Now recall that  $f_0$ satisfies equation \eqref{eq:eqdif1}, so we are reduced
to check that for all $x\in(-\nu,\nu)$,
\begin{equation}
\eta+\delta(x\partial_x f_0(x) -f_0(x))=
\eta+\frac\delta{4\alpha}\partial^2_xf_0
<0
\end{equation}
which holds provided $\eta=\eta(\delta)$ is small,
since $\partial^2_{x} f_0(\cdot)$ is negative and uniformly bounded away from zero.
Equation \eqref{al:2steps1} is proven. \qed

\subsubsection{Proof of \eqref{al:2steps2}} 
\label{sec:45}
The method is similar to the one we used for \eqref{al:2steps1}, the
main difference being that, via a chain of monotonicity arguments, we
analyze the evolution of the portion of interface near the ``poles''
by comparing it to the interface dynamics of Section \ref{sub:dap}
(which coincides with the height function of the zero-range process with
two types of particles) instead of the ``corner-flip dynamics''.

Denote by $h(\cdot,t)$ the function whose graph in the coordinates
system $(\be_1,\be_2)$ is given by the intersection of 
 $({1-t})\partial \mathscr D$ with the upper half-plane $\bbR\times \bbR_+$, and
$h_0(\cdot)=h(\cdot,0)$.  Note that $h_0$ is $C^{\infty}$ on $(-1,0)$
and on $(0,+1)$ by the definition of $\mathscr D$. 
 The boundary
condition \eqref{eq:bc2} ensures continuity of the first derivative of $h_0$
at zero ($\partial_x h_0(0)=0$); the reader can check that $h_0$
has also continuous second derivative and that
 \begin{equation}
\label{tgv}
 \partial^2_x h_0
(0)=\frac{1}{2\sqrt{2}}\partial^2_x f_0(-1/\sqrt{2})=-2\alpha,
 \end{equation}
but that the third derivative exhibits a discontinuity in $0$.

Recall that $\xi$ is the positive constant appearing in
\eqref{MN}.
 We set $\bar h: [-4\xi,4\xi]\mapsto \bbR$ to be the function defined
by the following conditions: $\bar
 h\equiv h_0$ on $[-2\xi,2\xi]$,  $\bar h$ is affine on
 $[-4\xi,-2\xi]$ and on $[2\xi,4\xi]$ and  the
derivative $\partial_x\bar h(\cdot)$ is continuous on $(-4\xi,4\xi)$.
Since $h_0(\cdot)$ is strictly convex, we have $h_0(x)\le \bar h(x)$ with
strict inequality outside $[-2\xi,2\xi]$.
 Define also the following subsets of $\bbR^2$ (cf.\ Figure \ref{fig:dessin6}):
\begin{equation}
J^1:=[4\xi,\infty)\times [\bar h(4\xi),\infty)\quad\quad J^2:=(-\8,-4\xi]\times[\bar h(4\xi),\8).
\end{equation}
To avoid notational complications with integer parts, we assume that $L\bar h(4\xi)$ and
$4L\xi$ belong to $\bbZ^*$.

 \begin{figure}[hlt]

\leavevmode
\epsfxsize =10 cm
\psfragscanon 

\psfrag{j_1}{$L\,J^1$}
\psfrag{aa}{$L \bar h(4\xi)$}
\psfrag{j_2}{$L\,J^2$}
\psfrag{xi}{$L\xi$}
\psfrag{2xi}{$2L\xi$}
\psfrag{4xi}{$4L\xi$}
\psfrag{hbar}[c]{$L\bar{h}(\cdot/L)$}
\psfrag{Nepsilon}[c]{$N(\gep,\xi)$}

\epsfbox{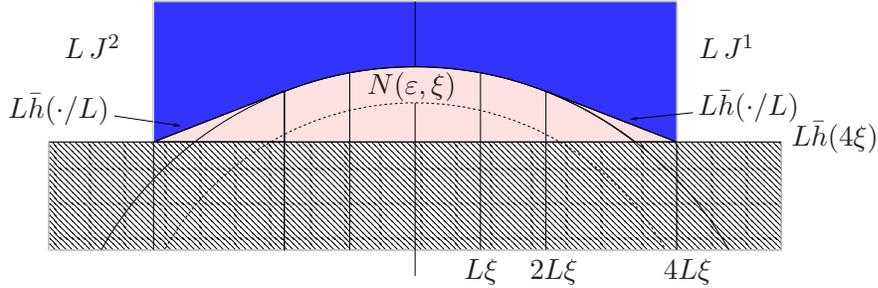}
\begin{center}
\caption{\label{fig:dessin6}
In the set $L(\,J^1\cup J^2)$ spins are frozen to "$+$" while in the
dashed region they are  frozen to ``$-$''. The initial condition is
``$+$'' in the dark-colored region and ``$-$'' in the light-colored
one. The boundary separating dark/light region is determined by the
function $\bar h(\cdot)$.}
\end{center}
\end{figure}

  First of
all, observe that, thanks to \eqref{tip}, we can freeze the spins in $ L(J^1\cup J^2) $ to their initial value "$+$" and, w.h.p., the
dynamics will be identical for all times to the original one.

Next, we employ a chain of monotonicities, based on the graphical
construction of Section \ref{sub:gcm}. Since we are
after an \emph{upper bound} on the set of minus spins, we can freeze
to "$-$" all spins whose vertical coordinate is below $L\bar
h(4\xi)$. Therefore, we have just a dynamics in the set
\[Y:= [-4L\xi+1,4L\xi-1]\times [L\bar h(4\xi),\infty) .\] In
principle, its initial condition is such that the spin at site
$(x_1,x_2)\in Y$ is "$-$" if and only if $x_2\in [L \bar h(4\xi),L
h_0(x_1/L)]$.  
The problem is however that the function $x\mapsto
\max(\bar h(4\xi),h_0(x))$ is not concave, which prevents to apply
directly Corollary \ref{spo2} later.  By monotonicity, we can modify
such initial condition by adding extra "$-$" spins: we therefore
stipulate that at time $t=0$ the spin at site $(x_1,x_2)$ is "$-$" if
and only if $x_2\in [L \bar h(4\xi),L \bar h(x_1/L)]$. Recall that
$\bar h(x)\ge h_0(x)$, so monotonicity goes in the correct direction.
With some abuse of notation, we still call $(\sigma(t))_{t\ge0}$ the
dynamics thus modified and $\mal(t)$ the set of minus spins. We need a final step in order to map the
evolution into the zero-range process. Note that, at time $t=0$, the
boundary of $\mal(t=0)$, intersected with the strip
$[-4L\xi+1/2,4L\xi-1/2]\times \bbR$, can be identified with the graph
of a c\`adl\`ag function \[ H_L(\cdot,0):[-4L\xi+1/2,4L\xi-1/2] \mapsto [L\bar
h(4\xi)-1/2,\infty)\cap \bbZ,\] which is constant on intervals
$[n,n+1)$ with $n\in \bbZ$ and takes boundary values $L\bar
h(4\xi)-1/2$ at the two endpoints ($H_L(x,0) $ is just a discretized
version of $L \bar h(x/L)$). However, for time $t>0$ it is not true in
general that the boundary of $\mal(t)$ is still the graph of a
function, simply because the set $\mal(t)$ can be non-connected (see
Figure \ref{fig:interditmove}).  Let $(\sigma^{(2)}(t))_{t\ge0}$ be
the dynamics obtained by erasing all the updates that would make
$\mal(t)$ non-connected. It is easy to realize that, since
$H_L(\cdot,0)$ has a single change of monotonicity (from
non-decreasing to non-increasing, recall that $\bar h(\cdot)$ is
concave) such erased updates can only correspond to a "$-$" spin
turning into a "$+$" spin (see again Figure \ref{fig:interditmove}). Therefore, the set of
minus spins of the dynamics $(\sigma^{(2)}(t))_{t\ge0}$ dominates
stochastically $\mal(t)$: more precisely,
we have shown that the coupling given by the graphical construction
implies that, w.h.p. and for all $t\ge0$, 
\begin{equation}
\label{eq:fields}
\mal(t)\subset\mathcal A_L^{(2)}(t):=\bigcup_{\{ x : \sigma^{(2)}_x(t)=-\}} \mathcal C_x.
\end{equation}
We let 
 \[
 H_L(\cdot,t):[-4L\xi+1/2,4L\xi-1/2] \mapsto [L\bar
 h(4\xi)-1/2,\infty)\cap \bbZ,\] denote the piecewise constant
 (random) function whose graph in the usual coordinates system
 $(\be_1,\be_2)$ is the intersection between $\partial
 \mal^{(2)}(t)$ and the strip $[-4L\xi+1/2,4L\xi-1/2]\times
 \bbR$. Note that $H_L(-4L\xi+1/2,t)=H_L(4L\xi-1/2,t)=L\bar
 h(4\xi)-1/2$.

\medskip

Equation \eqref{al:2steps2} is proved if one shows that for any $\gep$
small enough, w.h.p.
\begin{equation}\label{cocomero}
\frac1L H_L(x,\gep L^2)\le  h(x/L,(\alpha-\gd)\gep)\text{\;for every\;} x\in (-\xi L,\xi L).
\end{equation}
It is clear from  Remark \ref{rem:131} 
that the function $H_L(\cdot,\cdot)$ follows the
dynamics described in Section \ref{sub:dap}, with generator
\eqref{dnaspo}
(here we identify the function $H_L(\cdot,t)$ with
an element of $\Omega_{4\xi L-1/2}$, see  \eqref{thesta}).
According to Corollary \ref{spo2} one has
for arbitrarily small $\eta>0$, w.h.p, for all $x\in (-\xi L,\xi L)$
\begin{equation}\label{atrusk1}
  \frac1LH_L(x,\gep L^2)\le \bar\phi(x/L, (1+\| \partial_x \bar h\|_\infty)^{-2}
\gep)+\eta
\end{equation}
where $\| \partial_x \bar h\|_\infty=\sup_{[-4\xi,4\xi]}|\partial_x\bar h(x)|$ and $\bar\phi(x,L^2t)$ is the solution of 
\begin{equation}
\begin{cases}
\partial_t \bar\phi(x,t)&=\frac{1}{2}\partial^2_x \bar\phi(x,t)
\\
 \bar\phi( -4\xi, t)&=\bar\phi(4\xi,t)=\bar h(4\xi)\\
 \bar \phi(x, 0)&=\bar h(x) \quad \text{for every\;} x\in [-4\xi,4\xi].
\end{cases}
\end{equation}
The equation \eqref{cocomero} is thus proved if one has
\begin{equation}\label{letrucpareildelautresection}
 \bar\phi(x, (1+\| \partial_x \bar h\|_\infty)^{-2}\gep)< h(x,(\alpha-\gd)\gep)\text{\;for every\;} x\in [-\xi,\xi].
\end{equation}

Note that by Lemma \ref{lem:contilaplacien} (which is applicable
because the second derivative of $\bar h(\cdot)=h_0(\cdot)$ is uniformly
continuous in $(-2\xi,2\xi)$)
one has, uniformly on $[-\xi,\xi]$
\begin{multline}\label{patatra}
 \bar\phi(x, (1+\| \partial_x \bar h\|_\infty)^{-2}\gep)= \bar\phi(x, 0)+\frac{\gep}{2}(1+\| \partial_x \bar h\|_\infty)^{-2} \partial^2_x\bar\phi(x, 0)+o(\gep)\\
=h_0(x)+\frac{1}{2} (1+\| \partial_x\bar h\|_\infty)^{-2} (\partial^2_x h_0(0)+r(x))\gep+o(\gep)
\end{multline}
where $r(x)$ tends to $0$ for $x\to0$.
Finally, using \eqref{tgv}, if $\xi$ is chosen small enough
so that both $r(x)$ and $\|\partial_x\bar h\|_\infty$ are sufficiently
smaller than $\delta$,
\begin{equation}
 \bar\phi(x, (1+\| \partial_x \bar h\|_\infty)^{-2}\gep)\le h_0(x)-(\alpha-\gd/4)\gep.
\end{equation}

On the other hand one has
\begin{equation}
h(x,(\alpha-\gd)\gep)\ge h_0(x)-(\alpha-\gd/2)\gep,
\end{equation}
which ends the proof of \eqref{al:2steps2}. \qed

\subsection{Lower bound: proof of \eqref{top} and \eqref{hic}}\label{lwb}

The proofs follow the same ideas as those of Section \ref{upb}: we
need to control the dynamics for different portions of the interface
separately (around the poles and away from them) using the 
scaling
limit results provided by Theorems \ref{spo} and \ref{mickey}.


\subsubsection{Proof of \eqref{top}}

\label{sec:top}
Equation \eqref{top} is absolutely crucial to start the proof of
\eqref{hic} and quite independent of the rest.  The proof is very similar
to that of \cite[Theorem 2]{cf:CMST}, so we only sketch the main steps.
Set
\begin{equation*}
   D:=\{x\in (\bbZ^*)^2:d(x,(1-\delta)L\mathscr D)\le 1\}\,,\quad
   D':=(\bbZ^*)^2\cap\left((1+\delta^3)L\mathscr D\right)^c           
\end{equation*}
and  consider a modified dynamics $(\tilde \sigma(t))_{t\ge0}$
(whose law is denoted $\tilde\bP$), with the same initial condition as
$( \sigma(t))_{t\ge0}$ and the rules that: (i) after each update, any
"$-$" spin which has more than two "$+$" neighbors is turned to "$+$",
and the operation is repeated as long as such spins exist; (ii) the
dynamics stops at the time $\tilde \tau_{D,D'}$, the first time when
there is either a "$+$" spin in $D$ or a ``$-$'' spin in $D'$. We
define also $\tilde \tau_D$ the first time when there is a ``$+$''
spin in $D$ and $\tau_{D,D'},\tau_D$ the analogous random times for
the original dynamics.

Note that, by \eqref{tip}, w.h.p.  $\tau_{D,D'}=\tau_D$.  Note also
that the two dynamics can be coupled in a way that
$\tau_{D,D'}=\tau_D$ implies $\tilde\tau_{D,D'}=\tilde\tau_D\le
\tau_D$ (thanks to point (i) above, since before $\tilde\tau_{D,D'}$
the modified dynamics has less ``$-$'' spins that the original one).
Therefore,
\[
\bP(\tau_D\le \gep L^2)=\bP(\tau_D\le \gep L^2;\tau_{D,D'}=\tau_D)+o(1)\le
\tilde\bP(\tilde\tau_D\le \gep L^2;\tilde\tau_{D,D'}=\tilde\tau_D)+o(1)
\]
and it suffices to prove for instance that \[ \tilde\bP(\tilde
\tau_{D,D'}\le \gep L^2; \tilde\tau_D=\tilde\tau_{D,D'})\le
\exp(-\gamma L) \] for some $\gep=\gep(\delta)>0,\gamma>0$.
For this, one first observes (as in \cite[Eq. (8.6)]{cf:CMST}) that when $\tilde\tau_{D,D'}=\tilde\tau_D$
the difference between the number of ``$+$'' spins
at time $\tilde\tau_D$ and the number of ``$+$'' spins
at time $0$ is at least $c\gd^2 L^2$ deterministically, for some $c>0$.

Finally, (as in \cite[Eq. (8.10)]{cf:CMST}) one proves  that 
\[
\tilde\bP(|\{x:\tilde\sigma_x(\gep L^2)=+\}|-|\{x:\tilde\sigma_x(0)=+\}|
\ge c\delta^2L^2)\le \exp(-\gamma L)
\]
if $\gep=\gep(\delta)$ is small enough.  This is based on the fact
(cf.\  \cite[Lemma 8.5]{cf:CMST}) that, for times smaller than
$\tilde\tau_{D,D'}$, the rate of increase of the number of "$+$" spins
is uniformly bounded by a constant.

\subsubsection{Scheme of the proof of \eqref{hic}}
\label{sec:pohic}

Given some fixed $\delta>0$, we want to prove that for $\gep>0$ small enough, w.h.p.
\begin{equation}\label{lequipefr}
(1-(\alpha+\delta) \gep)L\DDD\subset \mal(\gep L^2),
\end{equation}
or equivalently
\begin{equation}
\sigma_x(\gep L^2)=-\;\text{for every}\;  x\in (1-(\alpha+\delta) \gep)L\DDD.
\end{equation}
Given $\xi$ small enough (depending on $\gd$) and $\nu$ small enough
(depending on $\xi$), we define (cf.\ Figure \ref{fig:dessin8})
\begin{equation}\begin{split}
\label{UAB}
U&:=(1-\nu)\mathscr D,\\
A_1(\gep)&:=\left[((1-(\alpha+\delta) \gep)\mathscr D)\setminus U\right]\cap [\xi,+\infty)^2,\\
B_1(\gep)&:=\left[((1-(\alpha+\delta) \gep)\mathscr D)\setminus U \right]\cap ([-\xi,\xi]\times\bbR^+).
\end{split}
\end{equation}
and $A_i,B_i$, $i=2,3,4$
as the  images of $A_1(\gep),B_1(\gep)$ by the
rotation of angle $(i-1)\frac{\pi}{2}$.

 \begin{figure}[hlt]

\leavevmode
\epsfxsize =8 cm
\psfragscanon 

\psfrag{A1}{$A_1$}
\psfrag{B1}{$B_1$}
\psfrag{U}{$U$}
\psfrag{xsi}{$\xi$}
\psfrag{4xi}{$4\xi$}
\psfrag{D}{$\mathscr{D}$}
\epsfbox{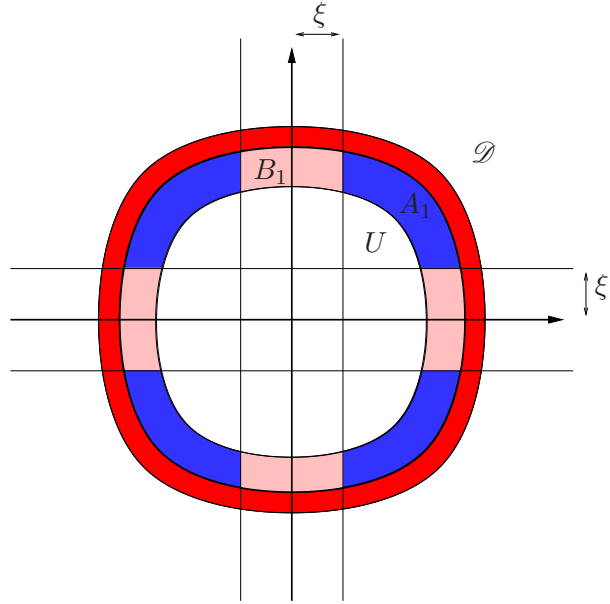}
\begin{center}
\caption{\label{fig:dessin8} The large droplet is $\DDD$ and
  $(1-\gep(\alpha+\delta))\mathscr{D}$ is obtained by removing the
  external dark layer. The white central region $U$, together
  with $A_1,B_1$ and its rotations (deformed rectangular regions) form
  a partition of $(1-\gep(\alpha+\delta))\mathscr{D}$.}
\end{center}
\end{figure}

One has
\begin{equation}
(1-(\alpha+\delta) \gep)\mathscr D=U\cup\left(\bigcup_{i=1}^4 A_i\right)\cup\left(\bigcup_{i=1}^4 B_i\right),
\end{equation}
and hence (using rotational symmetries), to prove \eqref{lequipefr}, it is sufficient to prove that for $\gep$ small enough, w.h.p.
\begin{align}
L\,U&\subset  \mal(\gep L^2), \label{eq:3steps1}\\
LA_1(\gep)&\subset \mal(\gep L^2), \label{eq:3steps2}\\
LB_1(\gep)&\subset \mal(\gep L^2).\label{eq:3steps3}
\end{align}
The first line, i.e.\ Equation \eqref{eq:3steps1}, is a direct
consequence of \eqref{top} provided that $\gep$ is chosen small enough
(how small depending on
$\nu$). 
Actually, one has the following
stronger statement that will be useful for what follows: if $\gep$ is small  then w.h.p.
\begin{equation}\label{trucbien}
L\,U\subset  \mal(t L^2)\;\text{for every\;} t\le \gep. 
\end{equation}
The main work is thus to prove  \eqref{eq:3steps2} and \eqref{eq:3steps3}.

\subsubsection{Proof of \eqref{eq:3steps3}}
This is similar to the proof of \eqref{al:2steps2}, except that
monotonicities will be needed in the opposite direction.

Let $\bar h:[-2\xi,2\xi]\mapsto\R$ be a concave, twice differentiable,
even function such that
\begin{equation}\begin{split}
\bar h(x)&= h_0(x), \quad \forall x\in [-\xi,\xi],\\
 \bar h(x)&<h_0(x), \quad \forall x\in [-2\xi, -\xi)\cup(\xi,2\xi]
\end{split}\end{equation}
where $h_0(\cdot)$ was defined in Section \ref{sec:45} to be the graph
of  $\partial\mathscr D\cap (\bbR\times \bbR^+)$ in the $(\be_1,\be_2)$ coordinate system. 
Once $\xi$ is fixed, we choose $\nu$ and $\bar h$ such that the point $(2\xi,\bar h(2\xi))$ lies in the interior of $U$.

Using equation \eqref{trucbien}, we can freeze the spins with
vertical coordinate $L \bar h(2\xi)$ and horizontal coordinate in
$(-2L\xi,2L\xi)$ (we assume for notational convenience that $2L\xi$
and $L\bar h(2\xi)$ are in $\bbZ^*$) to their initial value "$-$",
and w.h.p.\ , the dynamics we obtain is identical to the original one
up to time $\gep L^2$.

 \begin{figure}[hlt]

\leavevmode
\epsfxsize =10 cm
\psfragscanon 

\psfrag{A1}{$A_1$}
\psfrag{B1}{$B_2$}
\psfrag{U}{$L\,U$}
\psfrag{hbar}[c]{$L\bar{h}(\cdot/ L)$}
\psfrag{xi}{$L\xi$}
\psfrag{2xi}{$2L\xi$}
\psfrag{D}{$L\mathscr{D}$}
\epsfbox{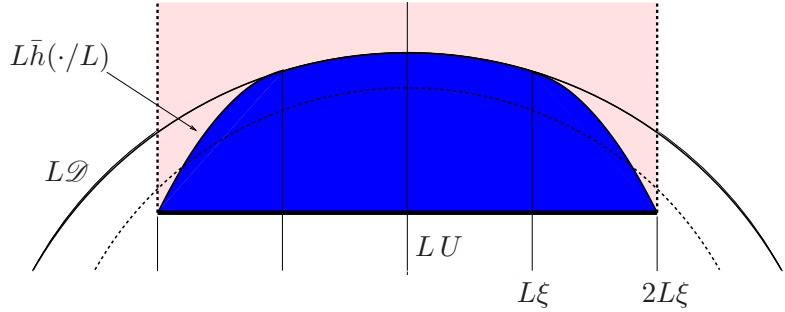}
\begin{center}
\caption{\label{fig:dessin9}
The sites in the dashed vertical lines are frozen to "$+$" and those
of the horizontal bold segment to "$-$" so that the dynamics in the
colored infinite rectangle is independent of the rest of the system. At time
$t=0$ the sites in the dark-colored region (whose upper boundary is
determined by $\bar h(\cdot)$ are "$-$" while those of the light-colored one are "$+$". The function $\bar h(\cdot)$ is such that the base of the 
dark-colored region is in $LU$.}
\end{center}
\end{figure}


Next we use a chain of monotonicities based on the graphical
construction of Section \ref{sub:gcm}.  Since we are after a
\textit{lower bound} on the set of minuses, we can freeze to "$+$" all
the spins with horizontal coordinate $\pm 2L\xi$ and vertical
coordinate larger than $L \bar h(2\xi)$.
Once this is done, we are reduced to considering the dynamics
restricted to the set
\begin{equation}
 Y_2:=[-2L\xi+1,2L\xi-1]\times [L\bar h(2\xi)+1,\infty)
,
\end{equation}
as spins on its boundary are fixed.  In principle, the initial
condition one should consider is such that $(x_1,x_2)\in Y_2$ has spin
"$-$" iff $x_2\in [L\bar h(2\xi)+1, L h_0(x_1/L)]$, but again by
monotonicity, we can add extra "$+$" spins: we stipulate that, at time
$t=0$, $(x_1,x_2)$ has spin "$-$" iff $x_2\in [L\bar h(2\xi)+1, L
\bar h(x_1/L)]$.  With some abuse of notation, the dynamics thus
modified is still called $(\sigma(t))_{t\ge0}$.

\medskip

As for the proof of \eqref{al:2steps2}, we need a final step to map
the dynamics onto the interface dynamics of Theorem \ref{spo}, the
problem being exactly the same as then: it is not true that the boundary of
$\mathcal A_L(t)$ stays connected for all $t$.  The solution adopted in
the previous section (leading to the dynamics $(\sigma^{(2)}(t))_t$,
see discussion before \eqref{eq:fields}) does not work here as we are
now looking for a \textit{lower bound}.

Let $(\sigma^{(3)}(t))_t$ be the dynamics that evolves like
$(\sigma(t))_t$ except that any spin that has three "$+$" neighbors is
turned instantaneously to "$+$" (see Figure \ref{fig:permismove}).  The coupling given by
graphical construction implies that
\begin{equation}\label{trucplusimp}
  \bigcup_{\{x: \sigma^{(3)}_x(t)=-\}}\mathcal C_x=: \mathcal A_L^{(3)}(t)\subset \mathcal A_L(t).
\end{equation}
Moreover our choice of initial condition guarantees that $\mathcal
A_L^{(3)}(t)$ stays connected for all time, since the set $\mathscr D$ is
convex.

 \begin{figure}[hlt]

\leavevmode
\epsfxsize =10 cm
\psfragscanon 
\psfrag{spinA}{\small $A$}
\psfrag{spinB}{\small $B$}
\epsfbox{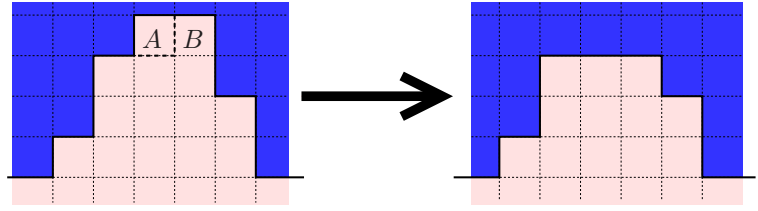}
\begin{center}
\caption{\label{fig:permismove} Light-colored (resp. dark-colored)
  squares denote ``$-$''  (resp. ``$+$'') spins. In our modified dynamics $\sigma^{(3)}$, when a spin has three ``$+$'' neighbors, it is instantaneously turned to 
``$+$''. On the figure, if spin at $A$ is updated and turns to
``$+$'', then the spin $B$ has three ``$+$'' neighbors and therefore also turns instantaneously to 
``$+$''.}
\end{center}
\end{figure}

We denote by $H_L(\cdot, t)$ the càdlàg function $[-2\xi,2\xi]\mapsto
\bbR$ whose graph  corresponds to the 
intersection between $\partial \mathcal A_L^{(3)}(t)$ and the vertical strip
$[-2L\xi+1/2,2L\xi-1/2]\times \bbR$.
Note that $H_L(\cdot, t)$ can be visualized as a collection of columns of
width $1$ and integer height.
With this notation and \eqref{trucplusimp}, equation
\eqref{eq:3steps3} is proved if one has w.h.p.
\begin{equation}\label{labba}
\frac{1}{L}H_L(x,L^2\gep)\ge h(x/L,(\alpha+\gd)\gep) \;\text{for
every\;} x\in (-\xi L,\xi L).
\end{equation}
Now we want to relate the dynamics of $H_L$ to that of Theorem
\ref{spo}. 
 The relation is almost identical to that discussed in Remark
\ref{rem:131}, except for a slight difference in the way particles of
types $A$ and $B$ annihilate in the zero-range process.  Given
$\bbZ\ni x=-2L\xi+1/2,\dots,2L\xi-1/2$, we say again that there are
$n>0$ particles of type A at time $t$ at site $x$ if $\lim_{y\to
  x^+}H_L(y,t) -\lim_{y\to x^-}H_L(x,t)=n$ and that there are $n>0$
particles of type B if the same difference equals $-n$. Then it is
easy to realize that, under the dynamics $(\sigma^{(3)}(t))_{t\geq 0}$, each
particle performs a symmetric simple random walk with jump rate
$1/(2n)$ both to right or left (with $n$ the occupation number of the
site where the particle is), and that particles of different type
annihilate immediately if they are \emph{at sites of distance $1$} (and not
\emph{on the same site}):
this is the effect of flipping instantaneously "$-$" spins with more
than two "$+$" neighbors. Note also that, due to convexity of $\bar
h(\cdot)$, particles of type A are always to the left of particles of
type B. Therefore, if we take $H_L(\cdot,t)$ and we eliminate one of
the columns of maximal height (see Figure \ref{fig:unecolonenmoin})
(note that there are always at least two), the modified height
function thus obtained follows exactly the evolution of Theorem
\ref{spo}. 

 \begin{figure}[hlt]

\leavevmode
\epsfxsize =10 cm
\psfragscanon 
\psfrag{suprcolon}{\small Supressed column}
\psfrag{0}{\small $0$}
\psfrag{N}{\small $N$}
\psfrag{N-1}{\small $N-1$}
\epsfbox{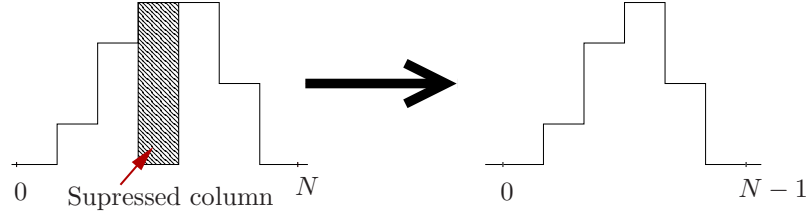}
\begin{center}
\caption{\label{fig:unecolonenmoin} 
Left: the height function associated to the ``$+/-$'' boundary for the dynamics
$\sigma^{(3)}(t)$. Right: the same height function, with one of the
highest columns removed; this follows the same evolution as in Theorem \ref{spo}. 
The fact that the new interface is step shorter makes no difference in
the macroscopic limit.}
\end{center}
\end{figure}

Of course, the erased column does not change the
scaling limit so that one can apply Theorem \ref{spo} and Corollary \ref{spo2} and get that
for any $t$ and $\eta>0$, w.h.p.
\begin{equation}
\frac{1}{L}H_L(x,L^2t)\ge  \bar\phi( x/L, t)-\eta\text{\;for
every\;} x\in\{-2\xi L,\dots,2\xi L\},
\end{equation}
where
\begin{equation}
\begin{cases}
\partial_t \bar\phi(x,t)&=\frac{1}{2}\partial^2_x \bar\phi(x,t)
\\
 \bar\phi( 2\xi, t)&=\bar\phi(-2\xi,t)=\bar h(2\xi)\\
 \bar \phi(x, 0)&=\bar h(x) \quad \text{for every }  x\in [-2\xi,2\xi].
\end{cases}
\end{equation}
Therefore, \eqref{labba} is proved if one can check that
\begin{equation}
\bar\phi( x, \gep)>h(x, (\alpha+\gd)\gep)\;\;\text{for every\;\;} x\in [-\xi ,\xi ].
\end{equation}
The above equation is proved is the same manner as \eqref{letrucpareildelautresection}: one just needs to choose  $\xi$  small enough.

\qed

\subsubsection{Proof of \eqref{eq:3steps2}}
\label{sec:po2}
First of all, one freezes to "$-$"
all the spins on the cross-shaped region
of sites in $L\,U$ (cf.\ \eqref{UAB}) such that at least one of their coordinates
is $\pm1/2$.
Equation \eqref{trucbien} guarantees that if $\gep$
is chosen small enough, w.h.p. the so-obtained dynamics coincides with the
original one up to time $\gep L^2$ if $\gep$ is small enough.

Then one defines $(\sigma^{(4)}(t))_{t\geq 0}$ as the dynamics obtained by
changing the initial condition in the following
manner: all spins $(x,y)\in L\DDD$ with either $|x|\geq L(1-\nu)$ or
$|y|\geq L(1-\nu)$ are changed from "$-$" to "$+$" (recall that $\nu$ is the constant
that enters the definition \eqref{UAB} of $U$) and therefore they stay ``$+$'' forever, since they have at least three ``$+$'' neighbors.
Note that, this way, the evolution in each quadrant of $(\bbZ^*)^2$ is 
independent. 
By monotonicity, we get that w.h.p, for every $t\le \gep L^2$,
\begin{equation}
\bigcup_{\{x : \sigma^{(4)}_x(t)=-\}} \mathcal C_x=:\mathcal A_L^{(4)}(t)\subset \mathcal A_L(t).
\end{equation}
and therefore \eqref{eq:3steps2} is proved if one can show that
\begin{equation}\label{finifini}
LA_1(\gep)\subset  \mathcal A_L^{(4)}(\gep L^2).
\end{equation}

Next, note that $\partial \mal^{(4)}(t)\cap \bbR_+^2$ 
in the coordinate system $({\bf f}_1,{\bf f}_2)$ is the graph of a
random piecewise affine function
\[
F_L:\left[-\frac{(1-\nu)L}{\sqrt{2}}, \frac{
    (1-\nu)L}{\sqrt{2}}\right]\mapsto \bbR
\]
which undergoes the corner-flip dynamics described in Theorem 
\ref{mickey} (apart from space  rescaling by a factor $\sqrt 2 $). For this reason on gets that, w.h.p.,
\begin{equation}
\label{inviu}
\lim_{L\to \8}\sup_{x\in[-\frac{1-\nu}{\sqrt{2}},\frac{1-\nu}{\sqrt{2}}]}\sup_{t\leq \gep}\left|\frac{1}{L}F_L(xL,tL^2)-g(x,t)\right|=0
\end{equation}
where 
\begin{equation}
 \begin{cases}
 \partial_t g(x,t)&=\frac{1}{4}\partial^2_x g(x,t)\\
 g(-\frac{1-\nu}{\sqrt{2}},t)&=g(\frac{1-\nu}{\sqrt{2}},t)=\frac{1-\nu}{\sqrt{2}}\\
 g(x,0)&=\bar f(x) \quad \text{for every } x\in \left[-\frac{1-\nu}{\sqrt{2}},\frac{1-\nu}{\sqrt{2}}\right]
\end{cases}
\end{equation}
and
$\bar f$ is the profile of the initial condition, i.e.
\begin{equation}
\bar f(x):= \min(f_0(x), (1-\nu)\sqrt{2}-|x|)).
\end{equation}
Let $P$ (resp. $P_1$) be the point on $\partial\mathscr D$ whose
coordinates $(x,y)$ (resp. $(x_1,y_1)$) in the coordinate system
$(\be_1,\be_2)$ satisfy $x>0,y=1-\nu$ (resp. 
$x_1>0,y_1=h_0(\xi)$).  Call $-d<0$ (resp. $-d_1<0$) the horizontal
coordinate of $P$ (resp. of $P_1$) in the coordinate system
$(\bff_1,\bff_2)$ .

In view of \eqref{inviu} and of Definition \eqref{UAB} of $A_1(\gep)$, equation \eqref{finifini} is satisfied if 
\begin{equation}
  g(x,\gep)>f(x,(\alpha+\gd)\gep) \quad\text{for every}\quad x\in (-d_1,d_1),
 \end{equation}
 The proof of this is very similar to that of \eqref{cczzcc} provided
 that $\bar f$ coincides with $f_0$ in a domain containing
 strictly $(-d_1,d_1)$ (this guarantees for instance that
 $\partial^2_x \bar f(\cdot)$ is uniformly continuous in a domain containing
 $(-d_1,d_1)$, so that the drift $\partial_t g$ is continuous in time,
 cf.\ Lemma \ref{lem:contilaplacien}).  For this to hold, it is
 enough to assume that $d>d_1$, i.e.\ that $\nu$ in
 \eqref{UAB} has been chosen sufficiently small as a function of $\xi$
 so that $1-\nu>h_0(\xi)$. \qed

\section{Proof of Theorem \ref{th:deterministico}: existence of
  anisotropic curve-shortening flow with convex initial condition }
\label{sec:convexbu}
Let us first recall some properties of the support function $h(\cdot)$
of a convex curve $\gamma$.  First of all, 
if $\gamma$ is contained in the convex set delimited by $ \gamma'$
then $h(\theta)\le h'(\theta)$ for every $\theta$.  Next, the support
function is related to the curvature and to the length $L(\gamma)$ of
$\gamma$ by (cf.\ \cite[Lemma 1.1]{GL2})
\begin{gather}
  \label{eq:1}
  \partial^2_\theta h(\theta)+h(\theta)=\frac1{k(\theta)}\\
L(\gamma)=\int_0^{2\pi}h(\theta)\dd\theta=\int_0^{2\pi}\frac1{k(\theta)}\dd\theta.
\label{eq:lungh}
\end{gather}
Also (cf.\ Lemma 4.1.1 in \cite{GH}, with the warning that what they call
$\theta$ is $\theta-\pi/2$ for us), the Cartesian coordinates
$(x(\theta),y(\theta))$ of the point of $\gamma$ where the outward
directed normal forms an anticlockwise angle $\theta$ with the
positive horizontal axis can be expressed as 
\begin{gather}
  \label{eq:3}
  x(\theta)=h(0)-\int_0^\theta \frac{\sin(s)}{k(s)}ds\\
  \label{eq:3bnis}
y(\theta)=h(\pi/2)+\int_{\pi/2}^\theta \frac{\cos(s)}{k(s)}ds.
\end{gather}
Under the flow \eqref{eq:meanc},
the time derivatives of area and length are  (cf.\ \cite[Lemma
2.1]{GL2})
\begin{gather}
  \label{eq:area}
  \frac d{dt} Area(\gamma(t))=-\int_{0}^{2\pi} a(\theta)\dd\theta\\
\label{eq:length}
  \frac d{dt} L(\gamma(t))=-\int_{0}^{2\pi} a(\theta)k(\theta,t)\dd\theta.
\end{gather}
For the moment these are formal statements since we do not know yet that
the flow exists.

\subsection{Proof of Theorem \ref{th:deterministico}}
Uniqueness of the flow is trivial, so we concentrate on existence.
First of all, we need to regularize the functions $a(\cdot)$ and $k(\cdot)$. 
Given $ 0<w< 1$ we define $a^{(w)}(\cdot)$ to be a
family of smooth approximations of the anisotropy function $a(\cdot)$.
More precisely:
\begin{assumption}\
  \begin{enumerate}
\item  $a^{(w)}(\cdot)$ is  
 $2\pi$-periodic and $C^\infty$;

\item $a^{(w)}(\theta)\stackrel{w\to0}\longrightarrow a(\theta)$
uniformly in $\theta$;

\item for fixed $\theta$, the function $w\mapsto a^{(w)}(\theta)$ is
  non-increasing;

\item the function $a^{(w)}(\cdot)$ is Lipschitz, uniformly in $w>0$
(this is possible because the function $a(\cdot)$ itself is $1-$Lipschitz);

\item the functions $w\mapsto \|\partial^2_\theta a^{(w)}\|_\infty:=
\max_\theta|\partial^2_\theta a^{(w)}(\theta)|
$ and $w\mapsto \|\partial^3_\theta a^{(w)}\|_\infty
$ are bounded, uniformly for $w$  in any  compact subset of $(0,1)$.
\end{enumerate}
\end{assumption}
\label{ass:a}
A possible choice is 
\[
a^{(w)}(\theta)=(a* g^{(w)})(\theta)+\gep_w
\]
where $g^{(w)}$ is a centered Gaussian of variance $w^2$. In the convolution
it is understood that $a(\cdot)$ is seen as a $2\pi$-periodic function on 
$\mathbb R$ and $\gep_w$ 
is chosen so that $a^{(w)}(\cdot)$ satisfies
the monotonicity with respect to  $w$. It is easy to check that one can choose
$\gep_w=-C w$ for some suitably large $C$. Indeed, monotonicity
in $w$ is guaranteed if for $w'<w$ one has
\[
\gep_{w'}-\gep_w\ge \|a* (g^{(w)}-g^{(w')})\|_\infty.
\]
On the other hand, since $a(\cdot)$ is Lipschitz, one sees easily that
$\|a* (g^{(w)}-g^{(w')})\|_\infty=O(w-w')$.

Also, we approximate $\gamma$ with a sequence of  convex
curves $(\gamma^{(w)})_{0<w<1}$ that satisfy the following properties:
\begin{assumption}\
\label{ass:h}
  \begin{enumerate}
\item $\gamma^{(w)}\supset \gamma^{(w')}\supset \gamma$ or equivalently $h^{(w)}(\cdot)\ge h^{(w')}(\cdot)\ge
  h(\cdot)$ if $0<w'<w$;
\item $\lim_{w\to 0} h^{(w)}(\cdot)=h(\cdot)$ uniformly in $\theta$,
  so that $\gamma$ is the limit of $\gamma^{(w)}$ in the topology of
  the Hausdorff distance;
\item the Lipschitz constant $\mathbb L(k^{(w)})$ of the curvature
  function $k^{(w)}(\cdot)$ is finite uniformly in $w$, $k^{(w)}(\cdot)\to
k(\cdot)$ uniformly  and
$\limsup_{w\to0}\mathbb L(k^{(w)})\le \mathbb L(k)$; 
\item The three first derivatives with respect to  $\theta$ of $k^{(w)}(\theta)$
  are bounded uniformly for $w$ in any compact subset of $(0,1)$.
\end{enumerate}
\end{assumption} 
(Like for the regularization of $a(\cdot)$ into $a^{(w)}(\cdot)$, a
possible
construction of $h^{(w)}(\cdot)$ is obtained convolving $h(\cdot)$
with a Gaussian of variance $w^2$ and adding a suitable constant
$\gep_w$).

For the regularized
mean curvature motion, it follows from \cite{GL2} that the equation
\begin{eqnarray}
  \label{eq:4bis}
  \begin{cases}
\partial_t h^{(w)}(\theta,t)=- a^{(w)}(\theta)k^{(w)}(\theta,t)  \\
h^{(w)}(\theta,0)=h^{(w)}(\theta)    
  \end{cases}
\end{eqnarray}
admits a solution corresponding to a flow of curves $(\gamma^{(w)}(t))_{t\ge0}$
which remain convex and shrink to a point in a finite time
\[
\tilde t_f:=t_f^{(w)}=Area(\gamma^{(w)}(0))/\int_{0}^{2\pi} a^{(w)}(\theta)\dd\theta
\]
(cf.\ \eqref{eq:area} with $a(\cdot)$ replaced by $a^{(w)}(\cdot)$).
For lightness of notation, we will often write $\tilde
h(\cdot,\cdot),\tilde \gamma(t),\tilde a(\cdot)$, etc. for the
regularized quantities 
 $h^{(w)}(\cdot,\cdot),\gamma^{(w)}(t),a^{(w)}(\cdot)$, etc.
Thanks to Assumption \ref{ass:a}, we have
that $\int_{0}^{2\pi} a^{(w)}(\theta)\dd\theta\to \int_{0}^{2\pi} a(\theta)\dd\theta=2$
as $w\to0$ and therefore
$t_f^{(w)}=t_f(1+o(1))$ when $w\to0$, with $t_f$ defined in Theorem \ref{th:deterministico}.

From \eqref{eq:1} and \eqref{eq:4bis} one can check that the curvature satisfies
the parabolic equation
\begin{eqnarray}
  \label{eq:eqcurv}
  \begin{cases}
 \partial_t \tilde k=\tilde k^2\partial^2_\theta(\tilde a \tilde k)+\tilde a\tilde k^3\\
\tilde k(\theta,0)=\tilde k(\theta).  
  \end{cases}  
\end{eqnarray}
Also, following \cite{GH} it is possible to see that the curvature function stays
$C^\infty$ until $\tilde t_f$ (since $\tilde a$ is
$C^\infty$). However, estimates on the regularity will \emph{not} be
necessarily uniform in the regularization parameter $w$ and we will
need to be very careful on this point.

\medskip

  For fixed $t$, set
\begin{eqnarray}
  \label{eq:gammadef}
\gamma(t):=\lim_{w\to 0} \gamma^{(w)}(t) 
\end{eqnarray}
where convergence is in the Hausdorff metric. 
\textit{A posteriori}, since we will see that $(\gamma(t))_t$ provides the (unique)
solution to our curve-shortening equation, it follows that the limit \eqref{eq:gammadef} does not depend on the choice of regularization.
Existence of the limit  (in the Hausdorff metric) along sub-sequences is
guaranteed by the Blaschke selection theorem
\cite[Th. 32]{Egg} which says that a family of convex subsets of a
bounded subset of $\bbR^n$ admits a sub-sequence converging to a
non-empty convex set. Uniqueness of the limit follows from the fact that $\gamma^{(w')}(t)\subset \gamma^{(w)}(t)$
if $w'<w$ and $t<t_f^{(w')}$ (because
$a^{(w)}(\theta)$ is decreasing in $w$ and that the curve is smooth at
all times).  One has to use Convergence in Hausdorff distance also holds for
the boundary curves.

Since the volume is continuous in the topology induced
by the Hausdorff metric \cite[Ch. 4]{Egg}
we also see that $Area(\gamma(t))=Area(\gamma)-t\int_0^{2\pi}a(\theta)
\dd\theta=Area(\gamma)-2t$; for $t\to t_f$ the curve $\gamma(t)$ shrinks
to a point (its diameter shrinks to zero).
We will prove 
\begin{theorem}
\label{th:detsmooth}
The flow of curves $(\gamma(t))_{t<t_f}$ defined in \eqref{eq:gammadef}
is a classical
solution of the anisotropic curve shortening flow \eqref{eq:meanc}
for $0\le t<t_f$.
\end{theorem}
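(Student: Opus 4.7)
The strategy is to pass to the limit $w \to 0$ in the regularized flow $(\gamma^{(w)}(t))_{t < t_f^{(w)}}$ and show the limit is a classical solution of \eqref{eq:meanc}. As a preliminary, I would establish that the convergence $\gamma^{(w)}(t) \to \gamma(t)$ in Hausdorff distance is monotone in $w$: the comparison principle for smooth anisotropic curve shortening, combined with $a^{(w)} \le a^{(w')}$ for $w > w'$ (Assumption \ref{ass:a}(3)) and the nestedness $\gamma^{(w)} \supset \gamma^{(w')}$ (Assumption \ref{ass:h}(1)), forces $\gamma^{(w)}(t) \supset \gamma^{(w')}(t)$ on the common domain of existence. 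Hence $h^{(w)}(\cdot, t)$ decreases pointwise to $h(\cdot, t)$, the convergence being uniform in $\theta$ by convexity.

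The heart of the argument is obtaining bounds on $k^{(w)}(\theta, t)$ and on its Lipschitz constant in $\theta$ that are uniform in $w$ on $[0, t_f - \delta]$ for any $\delta > 0$. A direct maximum-principle estimate on \eqref{eq:eqcurv} fails because $\partial^2_\theta a^{(w)}$ is unbounded as $w \to 0$ near the singular directions $\theta = j\pi/2$. The key trick is to work with the auxiliary quantity $\tilde v := \tilde a \tilde k$, which by direct computation satisfies
\begin{equation*}
\partial_t \tilde v \;=\; \frac{\tilde v^2}{\tilde a}\,\partial^2_\theta \tilde v \;+\; \frac{\tilde v^3}{\tilde a},
\end{equation*}
involving only $\tilde a$ itself and not its derivatives. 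Since $a(\theta)=1/(2(|\cos\theta|+|\sin\theta|)^2)\in[1/4,1/2]$ and $a^{(w)}\to a$ uniformly by Assumption \ref{ass:a}(2), the coefficient $1/\tilde a$ is bounded above and below uniformly in $w$. Applying the maximum principle at a minimum point of $\tilde v$ yields $\partial_t \min_\theta \tilde v \ge (\min_\theta \tilde v)^3/\max \tilde a > 0$, hence the uniform lower bound $\tilde v \ge \min_\theta \tilde v(\cdot, 0) \ge c > 0$ (using Assumption \ref{ass:h}(3)), which translates into a uniform lower bound on $k^{(w)}$.

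The main obstacle is obtaining the uniform \emph{upper} bound on $\tilde v$ on $[0, t_f - \delta]$: the naive ODE bound $\partial_t \max_\theta \tilde v \le (\max_\theta \tilde v)^3/\min \tilde a$ only prevents blow-up on a short initial interval. To reach the whole interval $[0, t_f - \delta]$ I would complement the max-principle estimate with geometric input: by \eqref{eq:area} one has $\operatorname{Area}(\gamma^{(w)}(t)) \ge c(\delta) > 0$ uniformly on $[0, t_f - \delta]$, while monotonicity bounds the diameter from above; classical convex-geometry inequalities then give a uniform lower bound on the inradius, and a barrier argument comparing $\gamma^{(w)}(t+s)$ with the regularized flow of disks inscribed in and circumscribed around $\gamma^{(w)}(t)$ at short time scales should produce the uniform two-sided curvature bound. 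Once $\tilde v$ is bounded above and below uniformly, the equation for $\tilde v$ is uniformly parabolic with coefficients depending Lipschitz-continuously on $\theta$ (Assumption \ref{ass:a}(4)), and De Giorgi--Nash--Moser regularity followed by a bootstrap argument yields uniform H\"older and then Lipschitz bounds on $k^{(w)}(\cdot, t)$ on compact subsets of $[0, t_f)$. Arzelà--Ascoli extracts a uniform subsequential limit $k^{(w_n)} \to k$, and passing to the limit in the integrated identity $h^{(w)}(\theta, t) - h^{(w)}(\theta, 0) = -\int_0^t a^{(w)}(\theta)\, k^{(w)}(\theta, s)\, ds$ via the uniform convergences $a^{(w)} \to a$ and $k^{(w)} \to k$ gives the same identity for $h$, $a$, $k$; continuity in $s$ of the integrand then recovers \eqref{eq:meanc} in differential form.
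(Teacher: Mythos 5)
Your proposal correctly identifies the pivotal trick of rewriting the curvature evolution for $g:=\tilde a\tilde k$ (your $\tilde v$) so that the resulting quasilinear PDE involves only $\tilde a$ itself and not its derivatives; this is exactly \eqref{eq:1GL} in the paper, and your maximum-principle lower bound on $\min_\theta g$ is correct and uniform in $w$. The genuine gap is in your uniform \emph{upper} bound on $\tilde k_{\rm max}(t)$. A lower bound on the inradius of $\gamma^{(w)}(t)$ (which indeed follows from $\mathrm{Area}\ge c(\delta)$ and bounded diameter) does \emph{not} imply an upper bound on curvature: a convex curve with inradius $\ge r_0$ can have arbitrarily large curvature at boundary points where the osculating circle is tiny (a square with slightly rounded corners is the model example). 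Sandwiching $\gamma^{(w)}(t+s)$ between the evolutions of an inscribed and a circumscribed disk controls global containment, not local speed or curvature — the high-curvature point need not be anywhere near the inscribed disk, so the barrier tells you nothing about it. The paper closes this step with the Gage--Li entropy functional $E(t)=\int_0^{2\pi}\tilde a\log(\tilde a\tilde k)\,\dd\theta$, whose growth is controlled by $L(\tilde\gamma(0))/\mathrm{Area}(\tilde\gamma((1-b)t_f))$ (both uniform in $w$), together with a Harnack-type bound $u(\theta,t)\ge -1/(2t)$ on $u=\partial_t\log g$; this yields $\|\partial_\theta g\|_\infty \le L(\gamma(0))/t + C\,\tilde k_{\rm max}(t)$, which in turn lets the entropy bound close on $\tilde k_{\rm max}$ (see Proposition \ref{prop:curv} and \eqref{eq:b2}).

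The regularity step is also circular as written: uniform parabolicity of $(g^2/\tilde a)\partial^2_\theta$ requires exactly the two-sided bound on $g$ you are trying to prove, so De Giorgi--Nash--Moser or Krylov--Safonov cannot substitute for it. And even granting the curvature bound, these are \emph{interior} estimates in time, so they would not give a Lipschitz bound on $k^{(w)}(\cdot,t)$ that is uniform in $w$ down to $t=0$, which is what Theorem \ref{th:deterministico} asserts on compact subsets of $[0,t_f)$; the paper obtains it by a direct maximum-principle computation on $\partial_\theta g$ (eq.\ \eqref{eq:grad}) that propagates the Lipschitz constant $\mathbb L(k)$ of the initial curvature forward in time.
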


\begin{definition}
  For $t<\tilde t_f$ let $\tilde k_{\rm max}(t)$ (resp. $\tilde k_{\rm
    min}(t)$) be the maximal (resp. minimal) curvature of $\tilde
  \gamma(t)$. We let $\tilde k_{\rm max}:=\tilde k_{\rm max}(0)$ and
  similarly for $\tilde k_{\rm min}$ and $\tilde a_{\rm max
    (min)}:=\max_\theta(\min_\theta) \tilde a(\theta)$.  Also,
  $k_{\rm min(max)}$ and $a_{\rm min(max)}$ are defined similarly to $\tilde
  k_{\rm min(max)},\tilde a_{\rm min(max)}$ but with $\tilde k(\cdot),\tilde
  a(\cdot)$ replaced by $k(\cdot),a(\cdot)$.
\end{definition}

 It is crucial that
$\tilde k_{\rm max}(t)$ stays bounded, uniformly for $w$ small, as long
as the disappearance time is not approached:
\begin{proposition}[Regularity estimate] Assume that the curvature
  function $k(\cdot)$ is Lipschitz. There exists $w_0>0$ such that,
\label{prop:curv}
for every  $b>0$, $t<t_f(1-b)$, $0<w\le w_0$  one has
\begin{eqnarray}
  \label{eq:maxcurv}
  \tilde k_{\rm max}(t)\le C_1
\end{eqnarray}
and
\begin{eqnarray}
  \label{eq:grad}
\max_\theta|\partial_\theta (\tilde a(\theta)\tilde k(\theta,t))|\le C_2(\mathbb L(k)+1)
\end{eqnarray}
where we recall that $\mathbb L(k)$ is the Lipschitz constant of the function $k(\cdot)$.
The constants $C_1$ and $C_2$ depend only on $b$ 
and on $k_{\rm max}$.
  \end{proposition}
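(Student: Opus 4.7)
My plan is to follow the strategy of Gage--Li \cite{GL1, GL2} for anisotropic curve-shortening flow, with the crucial additional requirement that every estimate depends only on quantities that remain bounded as the regularization parameter $w \to 0$, namely $a_{\min}, a_{\max}, \|\partial_\theta \tilde a\|_\infty, k_{\max}$ and $\mathbb{L}(k)$, but \emph{not} on $\|\partial^2_\theta \tilde a^{(w)}\|_\infty$. The starting point is the parabolic equation \eqref{eq:eqcurv} for $\tilde k$ and the equivalent equation for $\psi := \tilde a \tilde k$, namely $\partial_t \psi = \tilde a \tilde k^2\, \partial^2_\theta \psi + \tilde a^2 \tilde k^3$. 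A naive maximum principle at a space-time maximum of $\psi$ yields only $\partial_t \psi_{\max} \le \psi_{\max}^3/a_{\min}$, which may blow up before the disappearance time, so one must inject geometric information about the shrinking curve.

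For the curvature bound \eqref{eq:maxcurv}, I would first use the area identity \eqref{eq:area} together with $\int_0^{2\pi}\tilde a\,d\theta \to 2$ to get $Area(\tilde\gamma(t)) \ge c_0(b) > 0$ uniformly for $t \le t_f(1-b)$ and $w$ small enough; since $L(\tilde\gamma(t))$ is non-increasing by \eqref{eq:length} and is controlled above at $t=0$, both the length and the area stay in a compact subinterval of $(0,\infty)$. I would then adapt a Gage--Li Harnack-type argument to show that a suitable anisotropic isoperimetric functional is non-increasing along the flow, so that the ratio $\tilde k_{\max}(t)/\tilde k_{\min}(t)$ stays bounded by a constant depending only on $(b, k_{\max})$; combined with the identity $\int_0^{2\pi}(1/\tilde k(\theta,t))\,d\theta = L(\tilde\gamma(t))$ from \eqref{eq:lungh}, this turns the lower bound on $\tilde k_{\min}$ into the desired upper bound on $\tilde k_{\max}$.

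For the gradient bound \eqref{eq:grad}, I would differentiate the $\psi$-equation once in $\theta$ to obtain a linear parabolic equation for $\xi := \partial_\theta \psi$, whose coefficients involve only $\tilde a$, $\partial_\theta \tilde a$, $\tilde k$ and $\partial_\theta \tilde k$, all controlled in terms of $(b, k_{\max}, \mathbb{L}(k))$ by the previous step and by Assumption \ref{ass:a}. A Bernstein-type maximum principle applied to $\xi^2 + \lambda \psi^2$, with $\lambda$ chosen large enough so the good $-\tilde a\tilde k^2(\partial_\theta\xi)^2$ term absorbs the error terms, reduces the estimate to the initial quantity $\|\partial_\theta \psi(\cdot,0)\|_\infty \le \tilde a_{\max}\mathbb{L}(\tilde k) + \tilde k_{\max}\|\partial_\theta \tilde a\|_\infty = O(\mathbb{L}(k)+1)$, exactly what Assumptions \ref{ass:a} and \ref{ass:h} provide.

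The hard part is ensuring that the Harnack/curvature-bound step uses only the uniform Lipschitz bound on $\tilde a$ and not a $C^2$-bound. Standard Schauder-type arguments for the curvature PDE naturally bring in $\|\partial^2_\theta \tilde a^{(w)}\|_\infty$, which explodes as $w \to 0$; bypassing this requires testing the curvature equation against carefully chosen weighted functionals (rather than applying off-the-shelf parabolic regularity), verifying that the dangerous second-derivative-of-$\tilde a$ contributions either cancel or can be controlled by $\|\partial_\theta \tilde a\|_\infty$ via integration by parts. It is in this step that the bulk of the technical work lies, and where one must use most heavily the uniform control provided by Assumption \ref{ass:a}(iv).
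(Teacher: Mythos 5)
Your sketch of the gradient bound \eqref{eq:grad} is essentially the paper's argument: differentiate the evolution equation for $\psi=\tilde a\tilde k$ once in $\theta$, observe that no $\partial^2_\theta\tilde a$ appears, and apply a maximum principle at the extremum of $\partial_\theta\psi$ (the paper uses a plain maximum principle rather than a Bernstein functional $\xi^2+\lambda\psi^2$, but the mechanism is the same and the conclusion reduces to the initial gradient as you say).

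Your route to the curvature upper bound \eqref{eq:maxcurv}, however, has a genuine gap in two places. First, area bounded below and length bounded above do \emph{not} give a lower bound on $\tilde k_{\min}$ (a long thin convex body has small area-to-length ratio only near the endpoints of its range, not pointwise), so that step does not go through as stated; the paper instead obtains $\tilde k_{\min}(t)\ge Ck_{\min}>0$ by applying the maximum principle to the evolution of $\min_\theta(\tilde a\tilde k)$, which is non-decreasing (cf.\ \eqref{eq:kmin}--\eqref{eq:kmin2}), together with the observation \eqref{eq:kminimo} that a finite-length convex curve with Lipschitz curvature has $k_{\min}>0$. Second, the claim that an ``anisotropic isoperimetric functional'' is monotone and hence bounds the ratio $\tilde k_{\max}/\tilde k_{\min}$ is not how Gage--Li proceed and is not established in the anisotropic setting; Gage's isoperimetric-ratio monotonicity is specific to $a\equiv1$. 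The anisotropic argument (followed by the paper) is a two-step Li--Yau-type scheme: the Harnack estimate $u(\theta,t)=\partial_t\log g\ge -1/(2t)$ from \eqref{eq:3GL}, which gives a gradient bound $\|\partial_\theta g\|_\infty\le L(\gamma(0))/t+C\tilde k_{\max}(t)$ and hence a lower bound on the angular width over which $\tilde k$ is comparable to $\tilde k_{\max}$; combined with the boundedness of the entropy $E(t)=\int_0^{2\pi}\tilde a\log(\tilde a\tilde k)\,d\theta$ (controlled by the length decrease), this yields the upper bound on $\tilde k_{\max}$. You would need to replace your ratio-monotonicity step by this entropy-plus-Harnack argument; otherwise the curvature bound is not proved.

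Finally, note that the gradient bound $\|\partial_\theta g\|_\infty\lesssim L(\gamma(0))/t+\tilde k_{\max}(t)$ coming from the Harnack estimate degenerates as $t\to0$, which is why \eqref{eq:grad} requires the separate maximum-principle argument that you and the paper both use; it cannot be extracted from the Harnack step alone.
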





\begin{proof}[Proof of Proposition \ref{prop:curv}]
The proof is based on ideas of  \cite{GL2}. However, 
it is important to make sure that estimates
are uniform in $w\le w_0$ (in \cite{GL2} the anisotropy function $a(\cdot)$ is
assumed to be $C^2$, so there was no need to regularize it).

  Fix $w> 0$.
  First we get a lower bound on $\tilde k_{\rm min}(t)$.
Note first of all that at time zero the minimal curvature is bounded
away from zero (uniformly in $w$): indeed, using \eqref{eq:lungh}
and the fact that the curvature function is $\mathbb L(k)$-Lipschitz,
\begin{gather}
\label{eq:kminimo}
L(\gamma(0))=
\int_0^{2\pi} \frac{1}{k(\theta)}\dd \theta\ge  2\int_0^{\pi} \frac{1}{k_{\min}+\mathbb L(k)\theta}\dd \theta
=\frac{2}{\mathbb L(k)}\log \frac{\mathbb L(k)\pi+ k_{\rm min}}
{k_{\min}}.
\end{gather}
Then, since the length of $\gamma(0)$ is finite, $k_{\rm min}$ must be positive.

Set for simplicity \[g=g(\theta,t)=\tilde a(\theta)\tilde k(\theta,t).\]
Formula \eqref{eq:eqcurv} gives
\begin{eqnarray}
  \label{eq:1GL}
\partial_t g=\frac{1}{\tilde a}
\left(g^2\partial^2_{\theta} g+g^3\right)=:g(\theta,t)\,u(\theta,t).
  \end{eqnarray}
 This, together with the fact that $\tilde a(\cdot)$ and $\tilde k(\cdot,t)$ are
smooth, 
 implies that
 \begin{eqnarray}
   \label{eq:kmin} 
 \frac d{dt}\min_{\theta}g\ge \frac{\min_\theta g^3}{ \tilde a_{\rm max}}
 \ge 0 
 \end{eqnarray}
 (at the minimum point the second derivative is positive) so that
 \begin{eqnarray}
   \label{eq:kmin2}
 \tilde k_{\rm min}(t)\ge \frac{\tilde a_{\rm min}}{ \tilde a_{\rm max}}\tilde k_{\rm min}\ge C k_{\rm min}>0   
 \end{eqnarray}
with  $C$ independent of $w$ (say for $w\le w_0$) thanks
 to the uniform convergence $a^{(w)}(\cdot)\to a(\cdot)$
 and $k^{(w)}(\cdot)\to k(\cdot)$. 

Next the real work: bounding  $\tilde k_{\rm max}(t)$ uniformly in $w$.
From \eqref{eq:1GL} one sees that, since $\tilde a(\cdot)$ and $\tilde k(\cdot,t)$ are smooth, 
\begin{eqnarray}
  \label{eq:2GL}
\frac d{dt}\max_\theta g\le \frac1{\tilde a_{\rm min}}\times (\max_\theta g)^3.
\end{eqnarray}
From this one immediately gets that $\tilde k_{\rm max}(t)$ is upper bounded
uniformly in $w\le w_0$, up to some time $t_1$ depending only on
$k_{\rm max}$.  However the solution of $\dot x=x^3$ explodes in finite
time, and certainly before the time $\tilde t_f$ when the curve shrinks to a
point, so we need to do better.

For this, 
we define $z(t)=\min_\theta  u(\theta,t)$ (cf.\ \eqref{eq:1GL}). Then,
taking the derivative of $u$ with respect to $t$ shows (cf.\ Lemma 4.2 of
\cite{GL2} for details) that \[\frac{\dd}{\dd t}z(t)\ge 2 z(t)^2\] so that if
$z(0)\ge0$ we get $z(t)\ge 0$, if $z(0)\le 0$ we get $z(t)\ge
-1(1/|z(0)|+2t)$.  Altogether, we get that 
\begin{equation}\label{eq:3GL}
u(\theta,t)\ge -\frac1{2t}
\end{equation}
uniformly in $\theta$ and $w\le w_0$.
Now we use this to get a uniform bound on $\|\partial_\theta g\|_\infty$ in
terms of $\tilde k_{\rm max}(t)$.
Without loss of generality suppose that there exists $\theta_1$ such that $\partial_\theta g(\theta_1,t)=\|\partial_\theta g\|_{\infty}$
(if this is not the case one can still find $\theta_1$ such that $\partial_\theta g(\theta_1,t)=-\|\partial_\theta g\|_{\infty}$ and apply the same method).
Let also $\theta_2>\theta_1$ be such that $\partial_\theta
g(\theta_2,t)=0$ (such an angle exists since $g$ is periodic). Then, from
the definition \eqref{eq:1GL} of $u$,
\begin{multline}\label{drgf}
\|\partial_\theta g\|_{\infty}
=-\int_{\theta_1}^{\theta_2}\partial^2_\theta g
\dd\theta=-\int_{\theta_1}^{\theta_2} \left(\frac{u(\theta,t)}{\tilde
    k(\theta,t)}-\tilde a(\theta,t)\tilde k(\theta,t)
\right) \dd \theta \\
\le 
\frac{1}{2t}\int_{\theta_1}^{\theta_2}\frac{\dd \theta}{\tilde k(\theta,t)}
+(\theta_2-\theta_1)\tilde a_{\max}\tilde k_{\max}(t)\le 
\frac{L(\gamma(0))}{t}+C_4\tilde k_{\rm max}(t).  
\end{multline}
In the last inequality we used \eqref{eq:lungh} and then
\eqref{eq:length} which says that $L(\tilde \gamma(t))\le L(\tilde
\gamma(0))\le 2 L(\gamma(0))$.
Since $g=\tilde a \tilde k$ and by assumption $\tilde a$ is $C^\infty$ and
Lipschitz
uniformly in $w$, one deduces that 
\begin{equation}
\label{eq:ifyou}
\|\partial_\theta \tilde k  \|_\infty
\le \frac{L(\gamma(0))}{t}+C_5\tilde k_{\rm max}(t)\le C_6(t)\tilde k_{\rm max}(t)
\end{equation}
and $C_6$ can be chosen to be decreasing in $t$.
From this it is
trivial to see that, if $\theta_0$ is such that
$\tilde k(\theta_0,t)=\tilde k_{\rm max}(t)$, one has
\begin{equation}
  \label{eq:astut}
\tilde k(\theta,t)\ge \tilde k_{\rm max}(t)/2\;
\text{whenever $|\theta-\theta_0|\le \alpha(t)$ }
\end{equation}
for some $\alpha(t)$ increasing in $t$ (it could vanish for $t\to 0$).
Next, one proves that for $t<(1-b)t_f$
\begin{eqnarray}
  \label{eq:entropia}
E(t):=\int_0^{2\pi}\tilde a(\theta)\log(g(\theta,t))\dd\theta \le C_7  
\end{eqnarray}
where $C_7$ depends only on ${a_{\rm max}}$ and on $b$ and on the
maximal curvature $k_{\rm max}$ of the initial curve $\gamma(0)$.
Indeed, \eqref{eq:entropia} is obvious for $t=0$, since the initial
curvature is bounded by assumption. To get the control for $t>0$, one
observes (cf.\ Propositions 5.3 and 5.4 of \cite{GL2}) that
\[
\frac{\dd }{\dd t}E(t)\le 2 \tilde a_{\rm max} \frac{L(\tilde\gamma(0))}
{Area(\tilde\gamma((1-b)t_f))}\left(
-\frac{\dd}{\dd t} L(\tilde \gamma(t))\right).
\]
The prefactor is bounded since $b>0$ and the time-integral of the time-derivative
of the length gives at most $L(\tilde \gamma(0))$.
At this point we are almost done: 
using \eqref{eq:astut}
\begin{gather}
  \label{eq:b2}
C_7\ge \int_0^{2\pi}\tilde a(\theta)\log(g(\theta,t))\dd\theta \\
\ge 2\alpha(t)\tilde a_{\rm min}\log (\tilde a_{\rm min}\tilde
k_{\rm max}(t)/2)+2\pi \tilde a_{\rm max}\log [\min(1,\tilde
a_{\rm min}\tilde k_{\rm min}(t))]
\end{gather}
and this (recall that $\tilde k_{\rm min}(t)\ge C k_{\rm min}>0$, cf.\ \eqref{eq:kmin2}) gives us an upper bound on $\tilde
k_{\max}(t)$ 
uniformly in $w\le w_0$ and $t<(1-b)t_f$: up to $t_1$ one uses the upper bound
which comes from \eqref{eq:2GL} and after $t_1$ the one from
\eqref{eq:b2}; Eq. \eqref{eq:maxcurv} is proven. 
 When $t$ approaches
the disappearance time $\tilde t_f$ (i.e.\ when $b$ approaches zero) the
upper bound diverges (because $C_7$ diverges), as it should.

Equation \eqref{eq:ifyou} says that the curvature function is Lipschitz
with a Lipschitz constant $C$ that depends on $t,b$ and $L(0)$ but not
on $w$.  This is not yet the desired \eqref{eq:grad} because 
the bound diverges for $t\to0$. To prove \eqref{eq:grad} 
  remark that,
 using \eqref{eq:1GL},
\begin{gather}
\partial_t\partial_{\theta} g=\partial_{\theta} \left( \frac{g^2}{\tilde a}
\partial^2_{\theta}g+\frac{g^3}{\tilde a}\right)
=-\frac{\partial_\theta \tilde a}{\tilde a^2}\left( g^2
\partial^2_{\theta}g+g^3\right)
\\+\frac{1}{\tilde a}\left(2g\partial_{\theta}g\partial^2_{\theta}g+g^2\partial^3_{\theta}g+3g^2\partial_{\theta} g\right).
\end{gather}
At the point where $\partial_{\theta}g$ is maximized, $\partial^2_{\theta}g$
 cancels and $\partial^3_{\theta}g$  is non-positive. This, together
with the boundedness of $g$ uniformly in $w\le w_0,\theta\in[0,2\pi]$ and $t<(1-b)t_f$, 
implies
\begin{equation}
 \partial_t\max_{\theta}\partial_{\theta} g(\theta,t)
\le C_8(1+\max_{\theta}\partial_\theta g(\theta,t)).
\end{equation}
where $C_8$ just depends on $k_{\rm max}$ and $b$.  Integrating
with respect to time, one gets
\[
\max_{\theta}\partial_{\theta} g(\theta,t)\le C_9\left[\max_{\theta}\partial_{\theta} (a^{(w)}(\theta)k^{(w)}(\theta))+1\right]
\]
with $C_9$ depending only on $C_8$. Also, observe that
\[\partial_{\theta} (a^{(w)}(\theta)k^{(w)}(\theta))\le (3/4)|\partial_\theta
k^{(w)}(\theta)|+C_{10}\le (3/4)\mathbb L(k^{(w)})+C_{10}\] with $C_{10}$ a constant depending on $k_{\rm max}$,
since for $w$ small $a^{(w)}_{\rm max}<(3/4)$ and $a^{(w)}$ is uniformly
Lipschitz.  
Finally, from Assumption \ref{ass:h}  (3), we can conclude
$\partial_{\theta} (a^{(w)}(\theta)k^{(w)}(\theta))\le C_{10}+\mathbb L(k) $
for $w$ small.
 An analogous lower bound can be found on
$ \partial_t\min_{\theta}\partial_{\theta} g(\theta,t)$ and this gives
\eqref{eq:grad}.

\end{proof}


Following \cite{GH} it is possible to prove that, once we have bounds
on the curvature and on $\|\partial_\theta g(\cdot,t)\|_\infty$, for every
$n\ge2$ and $t<t_f(1-b)$ the derivatives
$\partial^n_\theta g(\theta,t)$ are also bounded.  The bounds we get
are in general \emph{not uniform} in $w$ but this is not very
important for our purposes.  Indeed, we will need only:
\begin{proposition} \label{youhou} Fix
   $b>0$. There exists a  function $c(w)$, which is non-increasing with respect to
$w\in (0,w_0]$  such that for $t<(1-b)t_f$
  \begin{eqnarray}
    \label{eq:htt}
   \max_\theta| \partial^2_t \tilde h(\theta,t)|\le c(w).
  \end{eqnarray}

\end{proposition}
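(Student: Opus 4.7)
Differentiating the regularized mean-curvature equation $\partial_t \tilde h(\theta,t)=-\tilde a(\theta)\tilde k(\theta,t)$ once more in time and using the curvature evolution \eqref{eq:eqcurv} to express $\partial_t \tilde k$ gives
\begin{equation}
\partial_t^2 \tilde h(\theta,t)=-\tilde a(\theta)\,\partial_t \tilde k(\theta,t)=-\tilde a(\theta)\left[\tilde k(\theta,t)^2\,\partial_\theta^2 g(\theta,t)+\tilde a(\theta)\tilde k(\theta,t)^3\right],
\end{equation}
with $g=\tilde a\tilde k$ as in the proof of Proposition \ref{prop:curv}. So the whole task reduces to bounding $\tilde a$, $\tilde k(\cdot,t)$, and $\partial_\theta^2 g(\cdot,t)$ in $L^\infty$ for $t<(1-b)t_f$. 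The first factor is bounded by Assumption \ref{ass:a}(4), and Proposition \ref{prop:curv} already furnishes a bound on $\tilde k_{\max}(t)$ \emph{uniform in} $w\le w_0$. Only the bound on $\partial_\theta^2 g$ requires higher-order regularity and will in general depend on $w$.

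To bound $\partial_\theta^2 g$, I would bootstrap from Proposition \ref{prop:curv}: we already control $g$ and $\partial_\theta g$ uniformly in $w$, so differentiating \eqref{eq:1GL} twice in $\theta$ yields a parabolic equation of the form
\begin{equation}
\partial_t(\partial_\theta^2 g)=\frac{g^2}{\tilde a}\,\partial_\theta^2(\partial_\theta^2 g)+\Psi\big(\theta,t,g,\partial_\theta g,\partial_\theta^2 g,\partial_\theta^3 g\big),
\end{equation}
where at a spatial maximum of $\partial_\theta^2 g$ the term $\partial_\theta^2(\partial_\theta^2 g)$ is non-positive and the term in $\partial_\theta^3 g$ vanishes. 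Together with the analogous inequality at the minimum, this yields, by Gronwall, a differential inequality on $\max_\theta|\partial_\theta^2 g|$ whose coefficients depend on the already-controlled quantities $\|g\|_\infty$, $\|\partial_\theta g\|_\infty$, $\|\tilde a\|_{C^3}$. Since $\tilde a^{(w)}$ is $C^3$ with bounds controlled by Assumption \ref{ass:a}(5), and since the initial data $k^{(w)}(\cdot)$ is $C^3$ with bounds controlled by Assumption \ref{ass:h}(4), integrating the differential inequality on $[0,(1-b)t_f]$ yields a finite bound $K(w,b)$ on $\|\partial_\theta^2 g(\cdot,t)\|_\infty$ for every $t<(1-b)t_f$ and every $w\in(0,w_0]$. (This step is essentially the standard parabolic smoothing carried out in \cite{GH}; the point is only to check that for each fixed $w>0$ the constants are finite, not that they stay bounded as $w\to 0$.) Combining everything gives a bound
\begin{equation}
\max_\theta|\partial_t^2\tilde h(\theta,t)|\le \tilde K(w,b):=\tilde a_{\max}\big[C_1^2\,K(w,b)+\tilde a_{\max}C_1^3\big],
\end{equation}
with $C_1$ the constant of Proposition \ref{prop:curv}.

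Finally, to arrange the monotonicity of $c(w)$ in $w$, I simply set
\begin{equation}
c(w):=\sup_{w'\in[w,w_0]}\tilde K(w',b).
\end{equation}
This is finite (the map $w'\mapsto\tilde K(w',b)$ is locally bounded on $(0,w_0]$ by the construction above), it is manifestly non-increasing in $w$ because the supremum is taken over a shrinking set, and it dominates $\tilde K(w,b)$ itself because $w\in[w,w_0]$.

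The main obstacle is the higher-order parabolic estimate giving $K(w,b)$: one has to check that the nonlinear source term $\Psi$ in the equation for $\partial_\theta^2 g$, once $g$ and $\partial_\theta g$ are treated as known bounded functions, indeed yields a Gronwall-type inequality rather than blowup. This ultimately relies on the smoothness of the regularized $\tilde a$, which is why the bound cannot be uniform in $w$; but this is exactly what the statement allows.
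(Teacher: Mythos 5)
Your overall architecture is right: reduce to bounding $\|\partial_\theta^2 g(\cdot,t)\|_\infty$ for $t<(1-b)t_f$ using the identity $\partial_t^2\tilde h=-\frac{1}{\tilde a}(g^2\partial^2_\theta g+g^3)$, accept that this bound can degenerate as $w\to0$, and fix monotonicity by taking a running supremum over $w'\in[w,w_0]$. That is exactly how the paper proceeds, and the final monotonization step is sound since $[w,w_0]$ is compact.

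However, the central step — bounding $\max_\theta|\partial_\theta^2 g|$ by a maximum-principle/Gronwall argument — has a genuine gap. If you differentiate \eqref{eq:1GL} twice in $\theta$, the source term $\Psi$ contains the term $\frac{2g}{\tilde a}(\partial^2_\theta g)^2$, coming from $\partial_\theta(2g\,\partial_\theta g\,\partial^2_\theta g)$. At a spatial maximum of $\partial^2_\theta g$ the terms in $\partial^3_\theta g$ do vanish and $\partial^4_\theta g\le 0$, so those drop out; but the quadratic term survives, with a \emph{positive} coefficient (since $g=\tilde a\tilde k>0$). The resulting differential inequality for $M(t):=\max_\theta\partial^2_\theta g(\theta,t)$ is therefore of Riccati type,
\begin{equation}
\dot M(t)\;\le\; C_0+C_1 M(t)+C_2 M(t)^2,
\end{equation}
with $C_2>0$, and Gronwall does not apply: a Riccati inequality only controls $M$ up to a finite time that depends on $M(0)$ and the constants, and nothing guarantees that this blow-up time exceeds $(1-b)t_f$. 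This is precisely the reason Gage--Hamilton (and, following them, the paper) do \emph{not} run a pointwise maximum principle on the second $\theta$-derivative. Instead, they show that $\Phi(t):=\int_0^{2\pi}[\partial^2_\theta g]^4\,\dd\theta$ satisfies a \emph{linear} differential inequality $\dot\Phi\le C\Phi$ (the problematic $\partial^4_\theta g$ term becomes a good-sign dissipation after integration by parts, which is also what tames the quadratic-in-$\partial^2_\theta g$ contributions), likewise bound $\int_0^{2\pi}[\partial^3_\theta g]^2\,\dd\theta$, and then convert to an $L^\infty$ bound via the one-dimensional Sobolev inequality $\|f\|_\infty^2\le C\int(f^2+(\partial_\theta f)^2)$. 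To fix your argument you would need to replace the pointwise Gronwall step by these integral energy estimates (or otherwise demonstrate a cancellation that removes the quadratic term, which does not occur here).
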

\begin{proof}
  Recall \eqref{eq:4bis} and \eqref{eq:1GL}:
\begin{equation}
 \partial^2_t \tilde h= -\frac{1}{\tilde a}(g^2\partial^2_{\theta}g+g^3).
\end{equation}
Thus we just have to bound $\partial^2_\theta g$, since we have
already proved that $g$ itself is bounded. 
For this, we adapt the method used by Gage and Hamilton in \cite{GH}
for the special case of the isotropic curve shortening flow where $a\equiv 1$.
What they observed \cite[Lemma 4.4.2]{GH}  is that, if the curvature and its $\theta$-derivative are bounded (which we proved in Proposition
\ref{prop:curv}), the $t$-derivative of $\Phi(t):=\int_0^{2\pi}[\partial^2_\theta g(\theta,t)]^4 \dd\theta$
can be upper bounded by a constant times $\Phi(t)$ itself and then one can
integrate the inequality with respect to $t$ to get a bound on $\Phi(t)$
in terms of $\Phi(0)$. In our case, with a
similar computation, we find that $(d/dt)\Phi(t)$ is upper bounded by
$\Phi(t)$ times a constant depending on $\|\partial_\theta
a^{(w)}\|_\infty$, which is finite uniformly for $w\le1$. 
Since $\Phi(0)$ is also bounded for $w$  in compact subsets of
$(0,1)$ (cf.\ Assumption \ref{ass:a} (5) and Assumption \ref{ass:h} (4)), we get that $\Phi(t)\le c_1(w)$
for $w\in(0,1)$ and $t<(1-b)t_f$ and we can choose $c_1$ to be
decreasing. In general, 
 $c_1$ will diverges when  $w$  approaches zero.

A similar computation (cf.\ \cite[Lemma 4.4.3]{GH} when $a(\theta)\equiv 1$) shows that 
\[\Psi(t):=\int_0^{2\pi}[\partial^3_\theta g(\theta,t)]^2 \dd\theta\le
c_2(w)\] with $c_2(\cdot)$ decreasing in  $w\in(0,1)$.
Then one uses the fact that for a smooth, $2\pi$-periodic function $f$ one has
(cf.\ \cite[Corollary 4.4.4]{GH})
\[
\|f\|_\infty^2\le  C\int_0^{2\pi}(f^2+(\partial_\theta f)^2)\dd\theta
\]
for some universal constant $C$, applied with $f(\cdot)=\partial^2_\theta g(\cdot,t)$,
to get that $\|\partial^2_\theta g\|_\infty\le c_3(w)$ as we wished.

\end{proof}

\begin{proof}[Proof of Theorem \ref{th:detsmooth}]
We are now ready to prove that $(\gamma(t))_t$ provides a classical solution
of \eqref{eq:meanc}.
This is based on the following easy consequence of the Arzel\`a-Ascoli
Theorem:
\begin{lemma}\label{Ascoli}
  Let $f^{(n)}$ be a sequence of periodic $C^1$ functions on $[0,2\pi]$,
such that both sequences $f^{(n)}$ and $\partial_x
  f^{(n)}$ are uniformly bounded and 
   equicontinuous. If $f^{(n)}\to f$ as $n\to\infty$, then $f$ is
$C^1$ and  
$\partial_x f=\lim_n \partial_x f^{(n)}$, where the convergence
is uniform and does not require sub-sequences.
\end{lemma}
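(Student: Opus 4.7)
The plan is to deduce everything from the classical Arzel\`a--Ascoli theorem applied to the sequence of derivatives, combined with the fundamental theorem of calculus.

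First I would extract a uniformly convergent subsequence. By hypothesis, the sequence $\partial_x f^{(n)}$ is uniformly bounded and equicontinuous on $[0,2\pi]$, so by Arzel\`a--Ascoli there exists a subsequence $\partial_x f^{(n_k)}$ converging uniformly to some continuous function $g$ on $[0,2\pi]$. Next, for every $x\in[0,2\pi]$ I would write the fundamental theorem of calculus identity
\begin{equation*}
f^{(n_k)}(x) = f^{(n_k)}(0) + \int_0^x \partial_x f^{(n_k)}(y)\,\dd y,
\end{equation*}
and pass to the limit $k\to\infty$: uniform convergence $\partial_x f^{(n_k)}\to g$ allows us to exchange limit and integral, while pointwise convergence $f^{(n_k)}\to f$ (which actually upgrades to uniform convergence by the same Arzel\`a--Ascoli argument applied to $f^{(n)}$ itself) gives
\begin{equation*}
f(x) = f(0) + \int_0^x g(y)\,\dd y.
\end{equation*}
Since $g$ is continuous, this shows $f\in C^1([0,2\pi])$ with $\partial_x f = g$.

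The remaining point is to upgrade convergence of $\partial_x f^{(n)}$ to $\partial_x f$ from a subsequence to the whole sequence. This is a standard compactness argument: suppose for contradiction that $\partial_x f^{(n)}$ does not converge uniformly to $\partial_x f$. Then there exist $\varepsilon>0$ and a subsequence $\{n_j\}$ with $\|\partial_x f^{(n_j)}-\partial_x f\|_\infty\ge \varepsilon$ for every $j$. By Arzel\`a--Ascoli applied to this subsequence, one can extract a further sub-subsequence that converges uniformly to some continuous $\tilde g$; repeating the integral identity argument above identifies $\tilde g=\partial_x f$, contradicting the lower bound $\varepsilon$. Hence the whole sequence converges uniformly to $\partial_x f$, as claimed. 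Periodicity is preserved at each step since it passes to uniform limits. There is no genuine obstacle here; the only thing to be careful about is to separate the \emph{existence} of a limit (subsequence plus integral identity) from the \emph{uniqueness} of the limit (rerun the compactness argument on any subsequence).
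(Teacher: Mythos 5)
Your proof is correct and is exactly the standard Arzel\`a--Ascoli argument the paper has in mind; the paper itself states the lemma without proof, calling it simply ``an easy consequence of the Arzel\`a--Ascoli Theorem,'' and your combination of compactness on the derivatives, the fundamental theorem of calculus, and the uniqueness-of-limit subsequence trick fills that gap cleanly.
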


First of all, we note that $\tilde h(\cdot,t)$ does converge (for
$ w\to0$) to $h(\cdot,t)$ for every fixed $t<t_f$. This just
follows from the fact that $\tilde \gamma(t)$ converges to $\gamma(t)$
in terms of Hausdorff distance. Furthermore, convergence is uniform in
$t<t_f(1-b)$ for every fixed $b$. 
This is true because the area difference between
$\tilde\gamma(t)$ and $\gamma(t)\subset \tilde\gamma(t)$ is small with $w$ (uniformly in $t$)
and the curvature is uniformly bounded: then, if $\tilde h(\theta,t)-h(\theta,t)$ 
where larger than some $\delta$ independent of $w$ for some
$(\theta,t)$, necessarily the area difference would be larger than
some $c(\delta)$ at that time.

Applying Lemma \ref{Ascoli} and recalling \eqref{eq:1}, we get that,
for $t$ fixed, $\partial_\theta \tilde h(\theta,t)$
and $\tilde k(\theta,t)$ converge to $\partial_\theta h(\theta,t)$ and
$k(t,\theta)$ respectively and that convergences are uniform in
$\theta$ (knowing that the curvature is Lipschitz is important here).
Note by the way that $k(\cdot,t)$ is Lipschitz, since
$\|\partial_\theta \tilde k(\cdot,t)\|_\infty$ is uniformly bounded.

Then applying dominated convergence (which is allowed in view of
Proposition \ref{prop:curv}), one gets that 
\begin{equation}\label{versionintegral}
 h(\theta,t)- h(\theta,s)=-\int_{s}^t a(\theta)k(\theta,u)\dd u,
\end{equation}
which is an integrated version of \eqref{eq:meanc}. 
To get the stronger statement \eqref{eq:meanc}, we need to prove that
$k(\theta,t)$ is continuous as a function of $t$.
\medskip

First of all, we prove that one can find a function $\gep:(0,1)\ni
w\mapsto\gep(w)\in\bbR_+ $, increasing and going to zero as $w\to0$
such that for all $\theta$, for all $t\le (1-b)t_f$,
\begin{equation}\label{trebien}
 |\tilde k(\theta,t)-k(\theta,t)|\le \gep(w).
\end{equation}
If this were not the case then, thanks to the fact that
$\tilde k(\cdot,t)$ and $k(\cdot,t)$ are uniformly Lipschitz, we would
have, say, for arbitrarily small $w$ and for some $\gep>0$, $t<t_f(1-b)$,
\[
 \tilde k(\theta,t)-k(\theta,t)\ge \gep
\]
for $\theta\in[\bar\theta,\bar\theta+\gep]$ for some $\bar\theta\in[0,2\pi]$.
But then, since
(cf.\ \eqref{eq:1}) 
\[
(\partial^2_\theta+1)(h(\theta,t)-\tilde h(\theta,t))=\frac1{k(\theta,t)}-\frac1{\tilde k(\theta,t)}, 
\]
this would contradict the uniform convergence of $\tilde
h(\cdot,\cdot)$ to $h(\cdot,\cdot)$.

On the other hand, from  Proposition \ref{youhou},
 for all $\theta$ and  for all
$t,s\le (1-b)t_f$
\begin{equation}
| \tilde k(\theta,t)-\tilde k(\theta,s)|\le c(w)|t-s|. 
\end{equation}
Together with \eqref{trebien} this implies that
\begin{equation}\label{unifcon}
 |k(\theta,t)-k(\theta,s)|\le \inf_{w} \left(2\gep(w)+c(w)|t-s|\right).
\end{equation}
The right-hand side clearly tends to zero with $|t-s|$ 
(choose a sequence $\{w_k\}$ tending to zero. If 
$c(w_k)$ does not diverge we are done. Otherwise, compute
the right-hand side for the $w=w_k$ with the largest value
of $k$ such that $c(w_k)\le |t-s|^{-1/2}$).
This shows that $t\mapsto k(\theta,t)$ is continuous away
from $t_f$ and the proof is complete.
\end{proof}

\section{Proof of Theorem \ref{th:convex}: evolution of a convex
  droplet }
\label{sec:thconv}

The proof is very similar to that of Theorem \ref{mainres} in the
scale-invariant case (Section \ref{infinitez}), and therefore it will be only sketched. We will also try to use as much as possible the
same notations as in Section \ref{infinitez}.

First we present two statements that are analogous to Propositions \ref{trucrelou} and \ref{mainprop}:
\begin{proposition}
\label{trucrelouc}
Let $\DD$ be convex with a bounded curvature function.
For every $\alpha>0$, w.h.p.
\begin{equation}\label{tipc}
\mathcal{A}_L(L^2 t) \subset L\, \mathcal{D}^{(\alpha)}\text{\;for every\;} t\ge0
\end{equation}
(recall definition \eqref{eq:cdelta}). Moreover, for every $\alpha>0$ there exists $\gep_1(\alpha,k_{\rm max})>0$ such that  w.h.p 
\begin{equation}\label{topc}
 \mathcal{A}_L(L^2 t) \supset L\,\mathcal{D}^{(-\alpha)} \text{\;for every\;} t\in[0,\gep_1].
\end{equation}
\end{proposition}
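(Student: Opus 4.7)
The argument parallels that of Proposition \ref{trucrelou}, with the localization scheme of Sections \ref{nunne}--\ref{sec:45} adapted to general convex $\DD$. The upper bound \eqref{tipc} is obtained via hydrodynamic-limit estimates for the interface, exploiting that the relevant heat equations preserve concavity; the lower bound \eqref{topc} reproduces the modified-dynamics argument of \cite{cf:CMST} as in Section \ref{sec:top}.

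\textbf{Upper bound.} Since $\partial \DD$ has bounded curvature, one can cover a neighborhood of $\partial \DD$ by a finite collection of thin rectangular slabs of two kinds: \emph{non-pole slabs}, centred on arcs where the tangent slope to $\partial \DD$ stays away from the horizontal and vertical directions and aligned with the rotated axes $(\bff_1,\bff_2)$, on which $\partial \DD$ is the graph of a concave function $f$; and \emph{pole slabs}, containing a point where the tangent is horizontal or vertical, aligned with $(\be_1,\be_2)$, on which $\partial \DD$ is the graph of a concave function $h$. For each slab, apply the chain of monotonicity/freezing arguments used in Sections \ref{nunne} and \ref{sec:45}: the spins just outside the slab are frozen, and the initial profile is replaced by a suitable concave majorant. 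The resulting local dynamics is exactly the corner-flip dynamics of Theorem \ref{mickey} (for a non-pole slab) or the zero-range dynamics of Theorem \ref{spo} (for a pole slab). In both cases the scaling limit is a one-dimensional heat-type equation with concave initial data, so that the limiting profile is non-increasing in $t$. Combined with Corollary \ref{spo2} in the pole case and Theorem \ref{mickey} in the non-pole case, one obtains that with high probability the interface stays below the initial profile plus an error much smaller than $\alpha L$, uniformly in $t \in [0,T]$ for any fixed $T$. For $t > T$, \cite[Theorem 1.3]{FSS} gives $\tau_+ = O(L^2)$ w.h.p., hence $\mal(L^2 t) = \emptyset$ for $T$ large, and the conclusion follows.

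\textbf{Lower bound.} This is an almost verbatim adaptation of Section \ref{sec:top}. Set $D := \{x \in (\bbZ^*)^2 : d(x, L\DD^{(-\alpha)}) \leq 1\}$ and $D' := (\bbZ^*)^2 \cap (L\DD^{(\alpha^3)})^c$, and introduce the modified dynamics $(\tilde\sigma(t))$ in which after each update any minus spin with three or more plus neighbours is iteratively flipped to plus; stop it at the first time $\tilde\tau_{D,D'}$ a plus spin appears in $D$ or a minus spin in $D'$. The already-proved upper bound \eqref{tipc} yields $\tau_{D,D'} = \tau_D$ w.h.p., so that estimating $\bP(\tau_D \leq \gep_1 L^2)$ reduces to estimating the probability of $\{\tilde\tau_D \leq \gep_1 L^2 ; \tilde\tau_D = \tilde\tau_{D,D'}\}$ under the law of the modified dynamics. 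On that event the number of plus spins has increased by at least $c\alpha^2 L^2$ (a deterministic surface-area argument based on the width $\alpha + \alpha^3$ of the corona $\DD^{(\alpha^3)} \setminus \DD^{(-\alpha)}$); on the other hand, before $\tilde\tau_{D,D'}$ the rate of plus-spin creation in $\tilde\sigma$ is uniformly bounded, as in \cite[Lemma 8.5]{cf:CMST}. A Chernoff-type bound then yields the desired exponentially small probability for $\gep_1 = \gep_1(\alpha, k_{\rm max})$ chosen small enough.

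\textbf{Main obstacle.} The delicate point is the upper bound near pole-arcs. For a general convex $\DD$ one must majorize the local boundary profile $h$ by a concave, $C^2$ function whose Lipschitz constant and curvature are controlled uniformly in terms of $k_{\rm max}$ and $\alpha$, so that Corollary \ref{spo2} applies with constants not depending on the particular pole considered. Equally, one must ensure that the finite cover by slabs, and the monotonicity chains patching them together, introduce only errors of size $o(L)$, so that the global containment $\mal(L^2 t) \subset L\DD^{(\alpha)}$ comes out with the prescribed $\alpha$ and not a larger enlargement.
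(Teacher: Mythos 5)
Your proposal for \eqref{tipc} is in the same spirit as the paper's (which simply declares it ``essentially identical'' to \eqref{tip}: freeze spins on the two axes, use the quadrant decomposition so that each quadrant's interface is a monotone staircase, apply Theorem~\ref{mickey}, exploit that the heat equation preserves concavity and is monotone in $t$, and invoke \cite{FSS} for large $t$). Your finer decomposition into both pole and non-pole slabs is workable but more elaborate than needed: the four-quadrant trick already puts the whole interface into the corner-flip framework, without ever needing Theorem~\ref{spo} for the upper bound.

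For \eqref{topc} you and the paper go genuinely different ways. You re-derive the modified-dynamics/Chernoff argument of Section~\ref{sec:top} (following \cite{cf:CMST}) directly for general convex $\DD$. The paper instead reduces to the scale-invariant case: it covers $\DD^{(-\alpha/2)}$ by a finite collection $\{\DDD_i\}$ of open convex sets, each a translated and shrunk copy of the invariant shape $\DDD$ and each contained in $\DD$, then applies the already-proved Proposition~\ref{trucrelou} to each $\DDD_i$, using monotonicity (since $\DDD_i\subset\DD$) and the fact that a finite union of w.h.p.\ events is w.h.p.; the dependence of $\gep_1$ on $k_{\rm max}$ then enters through the minimal diameter of the $\DDD_i$ needed to carry out the covering near $\partial\DD$. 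The covering route is cleaner: the delicate CMST bound is checked once (in Section~\ref{sec:top}) and then never revisited, whereas your route requires re-verifying, for arbitrary convex $\DD$, the deterministic ``at least $c\alpha^2L^2$ extra plus spins'' estimate and the bounded-rate lemma. Your route does work, because the CMST estimates are geometry-robust, but it duplicates effort.

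One local slip worth flagging: you justify the deterministic $c\alpha^2 L^2$ increase in plus spins via ``the width $\alpha+\alpha^3$ of the corona $\DD^{(\alpha^3)}\setminus\DD^{(-\alpha)}$.'' The area of that corona is of order $\alpha L^2$, not $\alpha^2 L^2$. The $\alpha^2$ in CMST comes from a more local computation: when the (convex-by-construction) droplet under the modified dynamics first loses a site in $D$ while still confined inside $L\DD^{(\alpha^3)}$, a cap of linear size comparable to $\alpha L$ must have been emptied, and that cap has area of order $\alpha^2L^2$. The conclusion you draw is correct (and $\alpha^2L^2$ is the weaker of the two lower bounds), but the stated justification is not.
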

\begin{proof}
  The proof of \eqref{tipc} is essentially identical to that of \eqref{tip}, so we give no detail.  
As for \eqref{topc}, given $\alpha$ it is possible to give a finite collection
$\{\DDD_i\}_i$ such that:
\begin{itemize}
\item each $\DDD_i$ is an open convex subset of $\bbR^2$, obtained from (the interior
  of) the invariant shape $\DDD$ via a suitable translation and shrinking;

\item $\DDD_i\subset \DD$ for every $i$;

\item $\cup_i \DDD_i\supset \DD^{(-\alpha/2)}$.
\end{itemize}
Given $\eta>0$, thanks to Proposition \ref{trucrelou} there exists
$\gep>0$ such that, w.h.p., for every $t<\gep$ one has
\[
\mal(L^2 t)\supset \cup_i \left(L\,\DDD_i^{(-\eta)}\right).
\]
Here we use monotonicity (because $\DDD_i\subset \DD$) and the fact
that the union of a finite number of events which occur w.h.p. still
has probability tending to $1$. Note that the choice of $\gep$ is
depending on $\eta$ but also on the diameter of the
smallest set in the collection $\{\DDD_i\}_i$ and consequently on
$k_{\rm max}$.    Then, if
$\eta$ is small enough (depending on $\alpha$) it is clear that
$\cup_i \DDD_i^{(-\eta)}\supset \DD^{(-\alpha)}$ (recall that the
$\DDD_i$ are open sets, so that every $x\in \DD^{(-\alpha)}$
is contained in the interior of at least one $\DDD_i$).
\end{proof}
\begin{proposition}
\label{mainpropc}
Let $\DD$ be convex whose  curvature function is $\mathbb L(k)$-Lipschitz and
is bounded away from zero and infinity.
  For all $\delta>0$ there exists $\gep_0(\delta,k_{\rm min},k_{\rm
    max},\mathbb L(k))>0$ such that for all
  $0<\gep<\gep_0$, w.h.p.,
\begin{equation}\label{hokc}
\mathcal{A}_L(L^2 \gep) \subset L\mathcal{D}(\gep(1-\delta)),
\end{equation}
and 
\begin{equation}\label{hicc}
\mathcal{A}_L(L^2 \gep) \supset L\mathcal{D}(\gep(1+\delta))
\end{equation}
where we recall that $\DD(t)$ is the set enclosed by the curve $\gamma(t)$.
\end{proposition}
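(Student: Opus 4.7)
The plan is to mimic the scheme of Proposition \ref{mainprop}, using Theorem \ref{th:deterministico} to supply the regularity of the target evolution $\DD(t)$ that, in the scale-invariant case, was available from the explicit computation of Section \ref{subsec:asis}. Since $\DD$ is strictly convex with Lipschitz curvature bounded away from $0$ and $\infty$, its boundary has exactly four pole points where the outward normal is along a lattice axis. Fix a small $\xi>0$ to be chosen at the very end. I cover a tubular neighborhood of $\partial\DD$ by four pole patches $N_i$, each a rectangle of size $O(\xi)$ around a pole, and by finitely many generic patches $M_j$, each a small region along $\partial\DD$ where the outward normal angle varies in an interval bounded away from the four lattice directions. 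To prove the upper inclusion \eqref{hokc} it suffices to show that w.h.p.\ $\sigma_x(\varepsilon L^2)=+$ on $L(M_j\setminus\DD(\varepsilon(1-\delta)))$ for every $j$ and analogously on each $N_i$; the lower inclusion \eqref{hicc} is treated symmetrically, after using \eqref{topc} and a bulk decomposition analogous to \eqref{eq:3steps1}.

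On each generic patch $M_j$ I adapt Section \ref{nunne}. Rotate local coordinates so that the tangent to $\partial\DD$ makes an angle close to $\pi/4$ with the lattice axes; in these rotated coordinates the minus boundary is, at time zero, the graph of a Lipschitz function. Using Proposition \ref{trucrelouc} together with the graphical construction of Section \ref{sub:gcm}, freeze the spins on the two transverse sides of the patch to ``$+$'' on the outer side and ``$-$'' on the inner side; w.h.p.\ this does not modify the dynamics in $M_j$ up to time $\varepsilon L^2$ and decouples it from the rest of the droplet. Theorem \ref{mickey} then identifies the scaling limit of the rescaled interface with the solution of the linear heat equation on the patch, with initial datum the local profile of $\partial\DD$; after inserting the geometric $\sqrt{2}$-rescaling of Figure \ref{fig:coresis2}, its first-order drift matches exactly $-a(\theta)k(\theta,0)$ on $M_j$, which is the drift of the target flow given by \eqref{eq:meanc}. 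The strict comparison between the linearized heat-equation profile and the target $h(\cdot,\varepsilon(1-\delta))$ then follows as in \eqref{cczzcc}, using the analogue of Lemma \ref{lem:contilaplacien}, which applies because Theorem \ref{th:detsmooth} guarantees that $k(\cdot,t)$ is Lipschitz in $\theta$ and continuous in $t$ down to $t=0$.

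On each pole patch $N_i$ I repeat the strategy of Section \ref{sec:45}. Freeze an outer ``$+$'' corridor and a ``$-$'' base strip, replace the initial profile inside the patch by a suitable smooth concave upper envelope $\bar h$ for the upper bound (or convex lower envelope for the lower bound), and further modify the dynamics by erasing, respectively by forcing, the updates that would split the droplet, as in the definitions of $\sigma^{(2)}$ and $\sigma^{(3)}$ of Sections \ref{sec:45} and \ref{lwb}. All these steps are legitimate by monotonicity and by Proposition \ref{trucrelouc}. The resulting interface coincides exactly with the zero-range dynamics of Theorem \ref{spo}, and Corollary \ref{spo2} sandwiches the rescaled height between solutions of the linear heat equation. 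The required comparison with $h(\theta,\varepsilon(1\mp\delta))$ for $\theta$ near the pole angle reduces, by a first-order expansion in $\varepsilon$ as in \eqref{patatra}, to the local identity $\tfrac12\partial^2_x h_0(\theta_{\rm pole})=-a(\theta_{\rm pole})k(\theta_{\rm pole},0)$, which follows from combining \eqref{eq:meanc} at $t=0$, $\theta=\theta_{\rm pole}$ with the support-function identity \eqref{eq:1}.

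The main obstacle is the exact pointwise matching, on each generic patch $M_j$, of the heat-equation drift produced by the corner-flip hydrodynamic limit with the anisotropic drift $a(\theta)k(\theta,0)$ of the curve-shortening flow: this requires the geometric $\sqrt{2}$-rescaling between the Ising lattice and the rotated corner-flip lattice, together with the identification of $a(\theta)=1/(2(|\cos\theta|+|\sin\theta|)^2)$ as the push-forward of the constant SSEP diffusion coefficient under the corresponding change of variables, at \emph{every} slope $\theta$. Once this matching is done, the uniformity in $\varepsilon$ and the dependence of $\varepsilon_0$ only on $\delta$, $k_{\min}$, $k_{\max}$ and $\mathbb L(k)$ follow directly from the uniform estimates of Theorem \ref{th:deterministico} and from the compactness of the finite family of patches $\{M_j,N_i\}$.
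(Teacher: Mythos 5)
Your plan follows the same overall scheme as the paper's proof: partition the annular region $L[\cD(\gep(1-\gd))]^c$ into pole pieces and generic pieces, control each via Theorem~\ref{mickey} away from poles and Corollary~\ref{spo2} at poles after a chain of monotonicities and spin-freezings, and then compare the heat-equation drift with the drift of the deterministic flow to first order in $\gep$. But two points need correction. You speak of covering the boundary by ``finitely many generic patches $M_j$'' after ``rotating local coordinates so that the tangent makes an angle close to $\pi/4$.'' A lattice model cannot be rotated by an arbitrary angle; the only available rotation is the fixed $\pi/4$ one sending $(\be_1,\be_2)$ to $(\bof_1,\bof_2)$. After that single rotation the whole arc of $\partial\DD$ between two consecutive poles is the graph of a $1$-Lipschitz function, and the corner-flip/SSEP description applies on the \emph{entire} arc regardless of the local slope, precisely because its hydrodynamic limit is the \emph{linear} heat equation. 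The correct decomposition is thus into exactly four generic pieces $M_i$ (one per quadrant, with freezing along the vertical line through $P_1$ and the horizontal line through $P_2$, cf.\ Figure~\ref{fig:novapatata}), not a fine cover of small patches, which would be both unnecessary and awkward to decouple.

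Second, you correctly single out the matching of the heat-equation drift with $-a(\theta)k(\theta,0)$ as ``the main obstacle,'' but do not prove it, and the stated ingredients are partly off. The matching away from the poles is the exact identity $\partial_t f(x,s)=-a(\theta)k(\theta,s)/\cos(\theta-\tfrac\pi4)=\tfrac14\partial^2_x f(x,s)$ (this is \eqref{eq:fondamental}), obtained by expressing $k$ through $|\partial^2_x f|/(1+(\partial_x f)^2)^{3/2}$ and plugging in the explicit $a(\theta)$. At a pole, the analogous identity $\partial_t h(u(P_1),0)=\tfrac12\partial^2_x h(u(P_1),0)$ uses $a(\pi/2)=1/2$ and the fact that the curvature there equals $-\partial^2_x h_0$ because $\partial_x h_0$ vanishes; it does \emph{not} come from \eqref{eq:1}, which relates $k$ to the \emph{support function} $h(\theta)$ in polar parametrization rather than to the Cartesian graph $h_0(x)$. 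You also need time-regularity of $\partial^2_x f(\cdot,s)$ for the first-order expansion to hold uniformly; the paper gets this from the joint continuity of $k(\theta,s)$ (Lemma~\ref{lem:contidrift}, built on Section~\ref{sec:convexbu}), not merely from Lipschitz-continuity of $k(\cdot,0)$.
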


\begin{proof}[Proof of Theorem \ref{th:convex} assuming Propositions
  \ref{mainpropc} and \ref{trucrelouc}]
It is enough to prove
 \eqref{eq:scaling} for $t<(1-b)t_f$ for arbitrary $b>0$. 
 Then, the statement for $t\ge (1-b)t_f$ and also \eqref{eq:drif} follows
from the fact that the disappearence time of a droplet of diameter
$\ell$ is w.h.p. $O(\ell^2)$ (recall that $\gamma(t)$ shrinks to a
point when
$t\to t_f$ in the sense that its diameter converges to zero).
Define 
$k^*_{\rm min}>0$ (resp. $k_{\rm max}^*,\mathbb L^*_k<\infty$) to be the infimum
(resp. maximum) of $k_{\rm min}(s)$ (resp. $k_{\rm max}(s), \mathbb L(k(s))$) on
$[0,(1-b)t_f]$.  Fix $\delta'$ small and let
$\gep<\gep_0(\delta',k^*_{\rm min},k_{\rm max}^*,\mathbb L^*_k)$ and $\gep<\gep_1(\gd/2,k^*_{\rm max})$
with $\gep_0,\gep_1$ defined in Propositions \ref{trucrelouc} and \ref{mainpropc}. Using the Markov
property and the monotonicity of our process   
we get that, w.h.p., for any $k$ such that $\gep
k<(1-b)t_f$
\begin{equation}
\label{soles}
\mathcal{A}_L(L^2 k\gep) \subset L\mathcal{D}(k\gep(1-\delta')).
\end{equation}

From \eqref{soles} and 
Proposition \ref{trucrelouc} we get that  w.h.p., for every $ t\leq (1-b)t_f$,
\begin{equation}
\mathcal{A}_L(L^2 t) \subset L\left[\mathcal{D}\left(\left\lfloor \frac{t}{\gep} \right\rfloor\gep (1-\delta')\right)\right]^{(\delta/2)} 
 \subset L\left[\mathcal{D}((t-\gep)(1-\delta'))\right]^{(\delta/2)}.
\end{equation}
 Setting $\gep'=t_f\gd'+\gep$ this implies that w.h.p.
\begin{equation}
 \mathcal{A}_L(L^2 t) \subset L\left[\mathcal{D}(t-\gep')\right]^{(\delta/2)} \quad  \text{for every}\quad t\leq (1-b)t_f.
\end{equation}
Finally observe (this follows from \eqref{eq:meanc}) that the
Hausdorff distance between $\mathcal{D}(t-\gep')$ and $\mathcal{D}(t)$
is at most $\gep' k^*_{\rm max}\max_\theta|a(\theta)|$ so that if 
 $\gep'$ is chosen such that
\begin{equation}
\gep' k^*_{\rm max}\max_\theta|a(\theta)|< \delta/2
\end{equation}
we get \eqref{eq:scaling}.

The lower bound is proven similarly and this is where one has to use
the assumption $\gep<\gep_1(\gd/2,k^*_{\rm max})$.
\end{proof}

\subsection{Upper bound: Proof of \eqref{hokc}}

\begin{definition}
Define $(P_i(t))_{i=1}^4$ to be the four ``poles'' of $\mathcal D(t)$, where
the tangent vector is either horizontal or vertical (recall that $\DD(t)$
is strictly convex at all times under our assumptions, cf.\ discussion after
\eqref{eq:kminimo}, so that the four poles are distinct and uniquely
defined).  $P_1(t)$ denotes the ``north pole'' and the others are
numbered in the clockwise order.  Denote by $(x(P_i(t)),y(P_i(t)))$
(resp. $(u(P_i(t)),v(P_i(t)))$) the coordinates of $P_i(t)$ in the coordinate system
$(\bof_1,\bof_2)$ (resp. $(\mathbf{e}_1,\mathbf{e}_2)$). When $t=0$ we omit
the time coordinate.
  
\end{definition}

 \begin{figure}[hlt]

\leavevmode
\epsfxsize =14 cm
\psfragscanon 
\psfrag{suprcolon}{\small Supressed column}
\psfrag{l1}{\small $\ell_1$}
\psfrag{l2}{\small $\ell_2$}
\psfrag{f1}{\small ${\bf f}_1$}
\psfrag{f2}{\small ${\bf f}_2$}
\psfrag{e1}{\small ${\bf e}_1$}
\psfrag{e2}{\small ${\bf e}_2$}
\psfrag{h}{\small $h(y,t)$}
\psfrag{f}{\small $f(x,t)$}
\psfrag{2xi}{\small $2\xi$}
\psfrag{x}{\small $x$}
\psfrag{B}{\small $A$}
\psfrag{A}{\small $B$}
\psfrag{y}{\small $y$}
\psfrag{P1}{\small $P_1$}
\psfrag{P2}{\small $P_2$}
\psfrag{P3}{\small $P_3$}
\psfrag{P4}{\small $P_4$}
\psfrag{N}{\small $N_1(\gep,\xi)$}
\psfrag{M}{\small $M_1(\gep,\xi)$}
\psfrag{o}{\small $0$}
\epsfbox{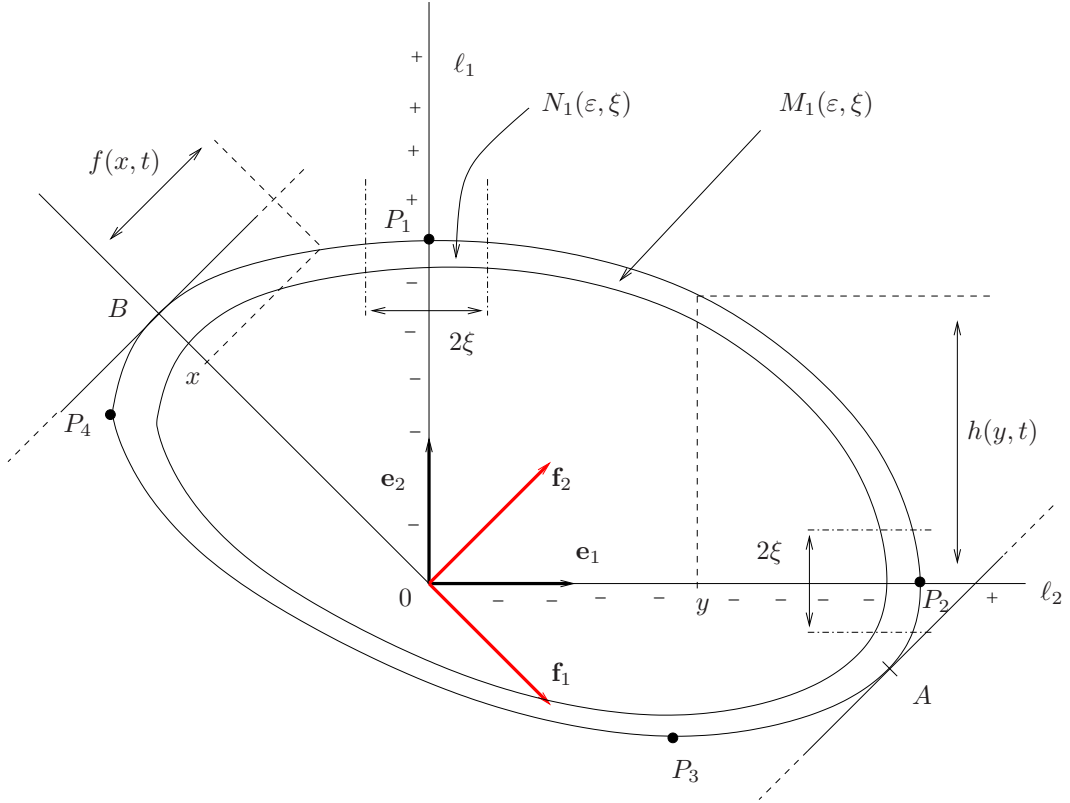}
\begin{center}
  \caption{\label{fig:novapatata} The larger convex set is $\DD$ and
    the smaller one is $\DD(\gep(1-\delta))$.  The poles $P_i$ of
    $\DD$ are marked with black dots (for convenience we have chosen $P_1$ one the vertical axis and $P_2$ on the horizontal one). The graph in $({\bf f}_1,{\bf
      f}_2)$ of the anti-clockwise portion of $\partial \DD$ between $A$ and
    $B$ is $f(\cdot,0)$ and the graph in $({\bf e}_1,{\bf e}_2)$ of
    the  portion of $\partial \DD$ between $P_4$ and $P_2$ is
    $h(\cdot,0)$.  For the proof of  \eqref{eq:Mc}, boundary spins to the left of $\ell_1$ are set to
    ``$-$'' below $P_1$ and ``$+$'' above; boundary spins below $\ell_2$ are set to
    ``$-$'' to the left of  $P_2$ and ``$+$'' to the right. }
\end{center}
\end{figure}

An equivalent formulation of \eqref{hokc} is: for all $\delta>0$ and $\gep$ small enough w.h.p.
\begin{equation}
\label{manierdevoirc}
 \sigma_x(\gep L^2)=+ \text{\;for every\;} x\in L\left[\mathcal{D}(\gep(1-\delta )) \right]^c.
\end{equation}
Given some small $\xi$ we divide $ \left[\mathcal{D}(\gep(1-\delta )) \right]^c$ in eight pieces $(M_i)_{i=1}^4$ and $(N_i)_{i=1}^4$ 
as follows (this is analogous
to the definition \eqref{MN} in the scale-invariant case, cf.\ Figure
\ref{label}):
 \begin{equation}
M_1(\gep,\xi):=  \big([u(P_1)+\xi,\infty)\times[v(P_2)+\xi,\infty)\big) \setminus \mathcal{D}(\gep(1-\delta))
\end{equation}
while $N_1(\gep,\xi)$ is 
the infinite component of  $([u(P_1)-\xi, u(P_1)+\xi]\times \bbR)\setminus \mathcal{D}(\gep(1-\delta))$ which contains $P_1$. The sets $M_i,N_i$ are defined
 analogously for $i=2,3,4$, so that
$\left[\mathcal{D}(\gep(1-\delta )) \right]^c=\bigcup_{i=1}^4(M_i\cup
N_i)$.  Equation \eqref{manierdevoirc} is proved if one can prove that
for every $i$, and $\varepsilon$ small enough, w.h.p.
\begin{align}\label{eq:Mc}
  \sigma_x(\gep L^2)&=+ \text{\;for every\;} x\in LM_i(\gep,\xi)\\
 \label{eq:Nc}  \sigma_x(\gep L^2)&=+ \text{\;for every\;} x\in LN_i(\gep,\xi).
\end{align}
Of course one can focus on  $i=1$, the other cases being obtained by a permutation of coordinates.

\subsubsection{Proof of \eqref{eq:Mc}}

We use the notation $f(\cdot,t)$ for the function whose graph in the
coordinate system $(\bof_1,\bof_2)$ is the portion of $\partial
\mathcal{D}(t)$ which goes in the anti-clockwise direction from point
$A$  where the tangent
forms an angle $\pi/4$ with the horizontal axis (cf.\ Figure \ref{fig:novapatata}) to the point $B$ 
where the angle is $(5/4)\pi$.
  The domain of
definition of $f(\cdot,t)$ decreases with time (because $\DD(t)$
shrinks) but for $t$ small enough it includes $[x(P_1),x(P_2)]$.  Let
$\mathcal D_1$ be the ``triangular-shaped'' region delimited by
$\partial\DD$, by the vertical line $\ell_1$ passing through $P_1$ and
by the horizontal line $\ell_2$ passing through $P_2$ (note that $
\mathcal D_1$ may not be included in $\mathcal D$).

We consider a modified dynamics in the north-east quadrant
$[Lu(P_1),\infty)\times[Lv(P_2),\infty)$ delimited by the lines
$L\ell_1,L\ell_2$.  All the spins are initially ``$-$'' in $L\mathcal
D_1$ and ``$+$'' otherwise. As for boundary spins, the spins at distance at most $1$ to the left of 
$L\ell_1$  are frozen to ``$-$'' if they are below $L P_1$
and to ``$+$'' if they are above. The spins  at distance at most $1$ below $L\ell_2$ are frozen to ``$-$'' if they are  to the left of $L P_2$
and to ``$+$'' otherwise, 
see Figure \ref{fig:novapatata}.  In the quadrant
under consideration, this dynamics dominates the original one (for the
inclusion order of the set of ``$-$'' spins).
Let $F_L(\cdot,t)$ denote the function whose graph in $(\bf f_1, \bf
f_2)$ is the interface between ``$-$'' and ``$+$'' spins for this
dynamics. Using exactly the same argument as in
\eqref{eq:limithydro1} we get that
\begin{equation}
\label{eq:limithydro1c}
\lim_{L\to \8}\sup_{x\in[x(P_1),x(P_2)]}\sup_{t\leq T}\left|\frac{1}{L}F_L(xL,tL^2)-g(x,t)\right|=0
\end{equation}
in probability,
where $g$ is the solution for $t\geq 0$ and $x\in(x(P_1),x(P_2))$ of
\begin{equation}
\begin{cases}
\partial_t g(x,t)=\frac{1}{4}\partial^2_x g(x,t) 
\\
 \label{eq:edp6} g(\cdot,t)=f(\cdot,0)
\\
 g(x(P_1),t)=y(P_1) \textrm{	and	}  g(x(P_2),t)=y(P_2).
\end{cases}
\end{equation}

We are thus reduced to prove that for every $\tilde{x}_1,\tilde{x}_2$ satisfying 
$x(P_1)<\tilde{x}_1<\tilde{x}_2<x(P_2)$ and every $x\in (\tilde{x_1},\tilde{x_2})$
\begin{equation}\label{cczzccC}
g(x,\gep)<f(x,(1-\gd)\gep).
\end{equation}
 Lemma \ref{lem:contilaplacien} (which is valid also in this case, 
 since the curvature is Lipschitz and therefore $\partial^2_x f(\cdot,0)$ is uniformly continuous) allows us to write that for any fixed $\eta$, for $\gep$ small enough,
\begin{equation}\label{tytu}
g(x,\gep) \leq f(x,0)+\frac{\gep}{4}\left(\partial^2_x f(x,0)+\eta\right).
\end{equation}

We are left to estimate the right-hand side of \eqref{cczzccC}.
For any $\theta\in(0,\pi/2)$ and $s>0$ define $x(\theta,s)$ to be the 
${\bf f}_1$ coordinate, in the $(\bof_1,\bof_2)$ coordinate system, 
of the point of $\gamma(s)$ where the outward normal vector forms an anticlockwise  angle $\theta$ 
with the horizontal vector $\mathbf{e}_1$.  
Note that for $s\geq 0$
$x(\cdot,s)$ defines a bijective function.
We denote $\theta(\cdot,s)$ its inverse.

It is more practical for the purposes of this section to rewrite the
curve-shortening flow in the $(\bof_1,\bof_2)$ coordinate system.
Using the explicit expression \eqref{eq:a} of $a(\theta)$,
some trigonometry and the expression $|f''(x)|/(1+(f'(x))^2)^{3/2}$ 
for the absolute value of the curvature at the point $(x,f(x))$ 
of the curve given by the graph of a function $x\mapsto f(x)$, 
one gets that for  $\theta\in (0,\pi/2)$
\begin{equation}
a(\theta)k(\theta,s)=-\frac{1}{4}\partial^2_xf(x(\theta,s),s)\cos(\theta-\frac{\pi}{4})
\end{equation}
and
\begin{equation}
\label{eq:fondamental}
\partial_tf(x, s)=-\frac{a(\theta(x,s))k(\theta(x,s),s)}{\cos(\theta(x,s)-\frac{\pi}{4})}=\frac{1}{4}\partial^2_xf(x,s),
\end{equation}
so that 
\begin{equation}
f(x,(1-\gd)\gep)=f(x,0)+\int_0^{(1-\gd)\gep}\frac{1}{4}\partial^2_xf(x,s) \dd s.
\end{equation}
We need therefore to prove time-regularity of $\partial^2_xf(\cdot,s)$:
\begin{lemma} One has 
\label{lem:contidrift}
\begin{equation}
\sup\{| \partial_t f(x,s)-\partial_t f(x,0)|, s\in[0,t]  \textrm{ and
} x\in[x(P_1),x(P_2)]  \}\le \Psi(t,k_{max},k_{min},\mathbb L(k))
\end{equation}
where $\Psi$ tends to zero with the first argument.
\end{lemma}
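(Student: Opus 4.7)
The plan is to exploit \eqref{eq:fondamental},
\begin{equation*}
\partial_t f(x,s) = -\frac{a(\theta(x,s))\, k(\theta(x,s),s)}{\cos(\theta(x,s)-\pi/4)},
\end{equation*}
and to split the increment $\partial_t f(x,s)-\partial_t f(x,0)$ into the contributions coming from the variations, as $s\to 0$, of (a) the angle $\theta(x,s)$ at fixed $x$, (b) the product $a(\theta)k(\theta,s)$ at fixed $\theta$, and (c) the denominator $\cos(\theta(x,s)-\pi/4)$. For $x\in[x(P_1),x(P_2)]$ one has $\theta(x,0)\in[0,\pi/2]$, so that $\cos(\theta(x,0)-\pi/4)\geq 1/\sqrt{2}$; once (a) is controlled, (c) is harmless.

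First I would establish (a). By the evolution equation \eqref{eq:meanc} the normal velocity is bounded by $a_{\rm max} k_{\rm max}$, uniformly on $[0,(1-b)t_f]$ thanks to Proposition \ref{prop:curv}. Hence the point $x(\theta,s)$ of $\gamma(s)$ whose outward normal has angle $\theta$ satisfies $|x(\theta,s)-x(\theta,0)|\leq C_1 s$. To invert this at fixed $x$, one uses \eqref{eq:3}-\eqref{eq:3bnis}: rewriting them in the rotated frame $(\bof_1,\bof_2)$ gives
\begin{equation*}
|\partial_\theta x(\theta,0)| = \frac{|\cos(\theta-\pi/4)|}{k(\theta,0)},
\end{equation*}
which for $\theta\in[0,\pi/2]$ lies in $[1/(\sqrt{2}\, k_{\rm max}),\,1/k_{\rm min}]$. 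The map $\theta\mapsto x(\theta,0)$ is therefore bi-Lipschitz on the full arc between $P_1$ and $P_2$, and combining this with the previous displacement estimate gives $|\theta(x,s)-\theta(x,0)|\leq C_2 s$ uniformly in $x\in[x(P_1),x(P_2)]$.

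Next I would handle (b). By estimate \eqref{eq:grad} of Proposition \ref{prop:curv} (passed to the limit $w\to 0$ as in the proof of Theorem \ref{th:detsmooth}), the function $\theta\mapsto a(\theta)k(\theta,s)$ is Lipschitz uniformly in $s\in[0,(1-b)t_f]$, with constant depending only on $k_{\rm max}$ and $\mathbb L(k)$. Time continuity of $k(\theta,s)$ uniform in $\theta$ is the content of \eqref{unifcon}. Combining these two with (a) delivers a uniform $o(1)$ bound, as $s\to 0$, on
\begin{equation*}
\bigl|a(\theta(x,s))k(\theta(x,s),s)-a(\theta(x,0))k(\theta(x,0),0)\bigr|;
\end{equation*}
dividing by the cosine (bounded below by $1/2$ for $s$ small, by (a)) completes the proof.

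The main technical obstacle is to ensure that the modulus of continuity thus obtained depends \emph{only} on $k_{\rm max},k_{\rm min},\mathbb L(k)$ and on $b$. For this one must revisit Proposition \ref{youhou} and the argument leading to \eqref{unifcon} and check that both the approximation error $\varepsilon(w)$ and the bound $c(w)$ on $|\partial^2_t \tilde h|$ are quantifiable in terms of these data alone; alternatively one may bypass $\partial^2_t h$ and argue directly from the integrated identity \eqref{versionintegral} together with the uniform spatial Lipschitz bound \eqref{eq:grad}, invoking Arzel\`a--Ascoli to transfer the modulus from the regularised flow to the limit.
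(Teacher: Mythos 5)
Your decomposition into parts (a)--(c) is exactly the structure of the paper's proof: the paper reduces the claim to uniform time-continuity of $\theta(x,\cdot)$ (your (a)), invokes joint continuity of $k(\theta,s)$ in both arguments with a quantified modulus (your (b)), and then divides by the cosine, which is bounded below for $\theta$ near $[0,\pi/2]$ (your (c)). The bi-Lipschitz property of $\theta\mapsto x(\theta,s)$ via the explicit formula derived from \eqref{eq:3}--\eqref{eq:3bnis} is also the paper's route to inverting $x(\theta,s)$ at fixed $x$. So the overall plan is faithful.

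There is, however, a genuine gap in your justification of step (a). You assert that the bound $a_{\max}k_{\max}$ on the normal velocity \emph{hence} gives $|x(\theta,s)-x(\theta,0)|\le C_1 s$. This does not follow. As $s$ varies, the point of $\gamma(s)$ with prescribed normal angle $\theta$ moves both normally \emph{and} tangentially along the curve; writing the Cartesian position as $h(\theta,s)\,v(\theta)+\partial_\theta h(\theta,s)\,v'(\theta)$ and differentiating in $s$ gives a normal component $\partial_t h=-ak$ and a tangential component $\partial_\theta\partial_t h=-\partial_\theta(a(\theta)k(\theta,s))$, and the latter is not controlled by $a_{\max}k_{\max}$. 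The missing ingredient is precisely the uniform Lipschitz bound \eqref{eq:grad} on $\theta\mapsto a(\theta)k(\theta,s)$ from Proposition~\ref{prop:curv}, which you in fact invoke later for part (b); with it, the full velocity of the constant-$\theta$ point is bounded by $a_{\max}k_{\max}+C_2(\mathbb L(k)+1)$ and integrating in $s$ gives your claimed displacement estimate. The fix is therefore small, but the deduction as written is false. The paper's own proof works instead directly with the formula $x(\theta,t)=x(\pi/4,t)-\int_{\pi/4}^\theta \cos(\theta'-\pi/4)k(\theta',t)^{-1}\dd\theta'$ and appeals to time-continuity of $k$, though it is equally terse about the time-continuity of $x(\pi/4,\cdot)$, which again requires the tangential bound. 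Your concluding remark that the modulus must be checked to depend only on $k_{\max},k_{\min},\mathbb L(k)$ and $b$ is a legitimate concern the paper glosses over, and both remedies you propose (re-examining Proposition~\ref{youhou}, or arguing from \eqref{versionintegral} together with \eqref{eq:grad}) are reasonable ways to close it.
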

\begin{proof}[Proof of Lemma \ref{lem:contidrift}]

Recall from Section \ref{sec:convexbu} that the curvature function
$k(\theta,s)$ is  jointly continuous in $(\theta,s)$ 
and that its modulus of continuity depends only on
$k_{max},k_{min},\mathbb L(k)$.
Thus using equation \eqref{eq:fondamental} it is sufficient to prove that $\theta(x,s)$ is a continuous function in $s$ uniformly in $x$:
\begin{equation}
\sup\{| \theta(x,s)-\theta(x,0)|, s\in[0,t]  \textrm{ and }
x\in[x(P_1),x(P_2)]  \}\le
\Psi_2(t,k_{max},k_{min},\mathbb L(k))
\end{equation}
where again $\Psi_2$ tends to zero as $t\to0$.
This comes from the continuity of $x(\theta,\cdot)$:
\begin{equation}
\sup\{| x(\theta,s)-x(\theta,0)|, s\in[0,t]  \textrm{ and } \theta
\in[0,\frac{\pi}{2}] \}\le \Psi_3(t,k_{max},k_{min},\mathbb L(k))
\end{equation}
and from the fact that $x(\cdot,s)$ is strictly monotone:
 for $t\geq 0$,
\begin{equation}
\label{eq:derivec}
\inf\{|\partial_\theta x(\theta,s)|,s\leq t, \theta \in [0,\frac{\pi}{2}] \}>c(k_{min})>0.
\end{equation}
Both properties are a consequence of 
\begin{equation}
x(\theta,t)=x(\pi/4,t)-\int^\theta_{\pi/4}\frac{\cos(\theta'-\pi/4)\dd \theta'}{k(\theta',t)}
\end{equation}
which is easily derived from \eqref{eq:3}-\eqref{eq:3bnis}.
\end{proof}

We finally get that for $x\in(x(P_1),x(P_2))$ and $\gep$ small enough
(as a function of $k_{min},k_{max},\mathbb L(k)$),
\begin{equation}
  f(x,(1-\delta)\gep)\ge f(x,0)+(1-\delta)\frac{\gep}{4} (\partial^2_x f(x,0)-\eta).
\end{equation}
Thus, combining this with \eqref{tytu}, \eqref{cczzccC} is proved if one has
\begin{equation}
\partial^2_xf(x,0)+\eta< (1-\delta)\left(\partial^2_xf(x,0)-\eta\right)
\end{equation}
i.e.
\begin{equation}
2\eta+{\delta}\partial^2_xf(x,0)\leq 0.
\end{equation}
For this it is sufficient to have $\eta$ small enough, since
(cf.\ \eqref{eq:fondamental}) $\sup\{\partial^2_xf(x,0),
x\in[x(P_1),x(P_2)]\}$ can be upper bounded by a negative constant times the minimal curvature
$k_{\rm min}$, which is strictly positive.
\qed

\subsubsection{Proof of \eqref{eq:Nc}}

Set $h(\cdot,t)$ to be the continuous concave
function whose graph in the $(\mathbf e_1,\mathbf e_2)$ coordinate
system is the portion of  $\gamma(t)$ which goes from $P_2(t)$ to $P_4(t)$
with the anti-clockwise orientation.
 Given a small $\eta$ choose $\xi$ small
enough so that $\sup\{|\partial_x h(x,0)|, u(P_1)-\xi \leq x \leq u(P_1)+\xi
\} \le \eta.$

Consider $\bar{h}(\cdot)$ the $C^1$ function equal to $h(\cdot,0)$ on
$[u(P_1)-2\xi,u(P_1)+2\xi]$ and affine outside.  
Assume for definiteness that $\bar h(u(P_1)-4\xi)\le \bar h(u(P_1)+4\xi)$.
Define $\xi^-=u(P_1)-4\xi$ and
$\xi^+=\inf\{x>u(P_1),\bar{h}(x)=\bar{h}(\xi^-)\}$.  We consider the
restriction of $\bar{h}$ to $[\xi^-,\xi^+]$ and still call it
$\bar{h}$.  Define
\begin{equation}
J^1:=[\xi^+,\infty)\times [\bar h(\xi^+),\infty),
\quad\quad J^2:=(-\8,\xi^-]\times[\bar h(\xi^+),\8).
\end{equation}
We consider the same chain of monotonicities as in the scale-invariant 
case (Section \ref{sec:45}) and we end up with a dynamics in the half-strip
$
[L\xi^-,L\xi^+]\times [L \bar h(\xi^+),\infty)$ 
with boundary spins frozen to ``$+$''
in $L(J^1\cup J^2)$ and to  ``$-$'' in $\bbZ^*
\times (-\infty,L \bar h(\xi^+)]$ and an initial condition with
``$-$'' spins under the graph of $L \bar h(\cdot/L)$. Also, the dynamics thus obtained does not allow moves that make
the interface non-connected. Calling $(\sigma_2(t))_{t\geq 0}$ this dynamics,
\eqref{eq:fields} is satisfied.

Define $H_L:[L\xi^-,L\xi^+]\to \bbZ$ to be the function whose graph in $(\be_1, \be_2)$ is the interface between ``$+$'' and ``$-$'' spins. We have to prove
\begin{equation}\label{cocomeroc}
\frac1L H_L(Lx,\gep L^2)\le  h(x,(1-\gd)\gep)\text{\;for every\;} x\in (u(P_1)-\xi ,u(P_1)+\xi ).
\end{equation}
Following the same steps as in \eqref{atrusk1} to \eqref{patatra} 
(recall that $\partial^2_x h(\cdot,0)$ is uniformly continuous by the
Lipschitz curvature assumption)
one
finds that the left-hand side of \eqref{cocomeroc} is upper bounded w.h.p.\ by
\begin{equation}\label{letrucpareildelautresectionc}
h(x,0)+\frac{\gep}{2}(1+\eta)^{-2}(\partial^2_x
h(u(P_1),0)+r(x,\mathbb L(k)))+o(\gep),
\end{equation}
where $r(x,\mathbb L(k))$ tends to $0$ when $x\to u(P_1)$.

To estimate the r.h.s of \eqref{cocomeroc}, one  remarks that,
in analogy with \eqref{eq:fondamental},
\begin{equation}
\partial_s h (x,s)=-a(\theta(x,s))k(\theta(x,s),s)/\sin(\theta(x,s))
\end{equation}
so that $\partial_ t h(x,t)$ is continuous in $x$ and $t$ (since $\theta$ is around $\pi/2$, $\sin(\theta(x,s))$ is bounded away from zero). Moreover
\begin{equation}
\partial_t h (u(P_1),0)=\frac{1}{2}\partial^2_x h(u(P_1),0),
\end{equation}
which can be obtained directly from $a(0)=1/2$ and from the fact that
the curvature of $\DD$ at the north pole $P_1$ equals minus the second
derivative of $h(x,0)$ computed at $x=u(P_1)$.  Thus for every $x\in (u(P_1)-\xi,u(P_1)+\xi)$
\begin{equation}\label{bo}
  h(x,(1-\delta)\gep)\geq h(x,0)+(1-\delta)\frac{\gep}{2}(1+\eta)\partial^2_x h(u(P_1),0),
\end{equation}
and \eqref{cocomeroc} is proven (combining \eqref{letrucpareildelautresectionc} and \eqref{bo}) choosing $\eta$ and $\xi$ small enough.

\subsection{Lower bound: Proof of \eqref{hicc}}

We are confident that the reader is by now convinced that the proof of
Theorem \ref{th:convex} is essentially identical to that in the
scale-invariant case, modulo the fact that the definitions of the
various subsets of $\bbR^2$ needed to define the regions where spins
are frozen to ``$-$'' or ``$+$'' ($U,J^1,J^2$, etc) have to be
adapted in the obvious way due to the lack of discrete-rotation
symmetry of the general initial droplet $\DD$.
We will therefore skip altogether the proof of \eqref{hicc} and we
limit ourselves to indicating the only point where some (minor) care
has to be taken.  

The definition \eqref{UAB} of the set $U$ is replaced by
$U:=\DD^{(-\nu)}$, cf.\ \eqref{eq:cdelta}.  Let $s_1$ be the vertical
segment obtained moving downwards from the ``north pole of $U$'' until
the point $c$ where $s_1$ meets $s_2$, the horizontal segment obtained
moving to the left from the ``east pole'' of $U$ until $c$ is reached.
To prove the analog of \eqref{eq:3steps2}, mimicking the proof given
in Section \ref{sec:po2}, one would like to apply \eqref{topc} in
order to freeze to ``$-$'' all the spins along the two rescaled
segments $Ls_1,Ls_2$. This is however not allowed in general, because
nothing guarantees that they are entirely contained in $LU$, i.e.,
that $c\in U$ (this problem does not occur for the invariant shape
$\DDD$, where $c$ is the origin).  The solution however is simple
(cf.\ Figure \ref{fig:Dessin13}): one
just freezes to ``$-$'' all the spins along the portions of $L s_1,L
s_2$ which are inside $L U$, and along the shorter portion of
$L\partial U$ which connects them (call $\Gamma$ this portion).  The
point is that in this situation the $+/-$ interface between north and
east poles follows again the corner dynamics and Theorem \ref{mickey}
is applicable. The freezing of ``$-$'' spins along $\Gamma$ is
equivalent to putting a hard-wall constraint in the corner dynamics
(the interface is not allowed to cross a zig-zag path which
approximates $\Gamma$) but this is irrelevant: since $\Gamma$ is at
distance of order $L$ away from the linear profile the corner dynamics
approaches for long times, the probability that the interface even
feels the hard-wall constraint within the diffusive times of order
$L^2$ we are interested in goes to zero with $L$ (this again can be
seen via Theorem \ref{mickey}).  Other than that, the proof of
\eqref{hicc} is identical to that in the $\DD=\DDD$ case.

\begin{figure}[hlt]

\leavevmode
\epsfxsize =8 cm
\psfragscanon

\psfrag{ls1}[c]{$Ls_1$}
\psfrag{ls2}[c]{$Ls_2$}
\psfrag{c}[c]{$c$}
\psfrag{U}[c]{$LU=L\DD^{(-\nu)}$}
\psfrag{ls2}[c]{$Ls_2$}
\psfrag{gamma}{$\Gamma$}
\psfrag{D}[c]{$L\DD$}

\epsfbox{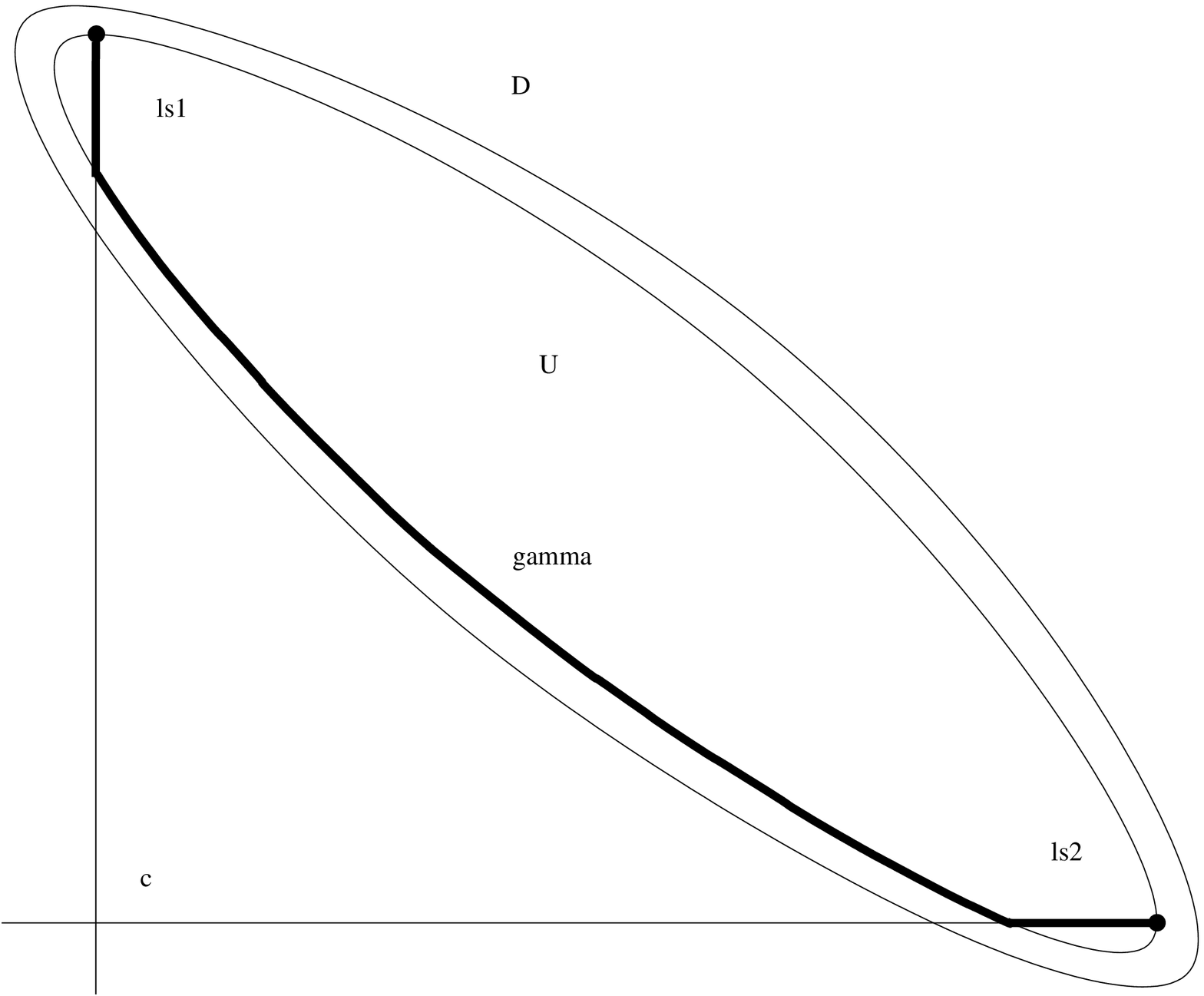}
\begin{center}
 \caption{
 }
\label{fig:Dessin13}
\end{center}
\end{figure}

\section{Proof of Theorem \ref{mickey}: scaling limit for SSEP}
\label{sec:potm}

The first step is to discretize \eqref{eq:edpcont}, so that instead of working with $\phi(\cdot,\cdot)$ we get 
$\Phi(\cdot,\cdot)$ solution of the analogous discrete Cauchy problem:
\begin{equation}
\begin{cases}
\partial_t \Phi( x, t)=\frac12\Delta \Phi (x, t) \\
\Phi^L( 0, t)=h^0_0=0\\  \Phi^L(L, t)=h^0_L \\
\Phi^L(x,0)=h^0_x 
\end{cases}
\end{equation}
for every $t\ge0$ and $x\in \{1,\dots, L-1\}$.
Here $\gD$ is the discrete Laplacian operator:
\begin{equation}
( \gD f)(x):= f(x+1)+f(x-1)-2f(x) \quad \forall x\in \{1,\dots,L-1\}.
\end{equation}
Note that $\Phi(x,t)=\bbE\left[h_x(t)\right]$, with $(h(t))_{t\ge0}$
the process with generator \eqref{defL1}, and that
$\Phi(0,t)-h_0(t)=\Phi(L,t)- h_L(t)=0$. It is a standard result that
$\Phi$, solution of the discrete space heat-equation, converges to
$\phi$ in all reasonable norms when $L \to \infty$ in the diffusive
limit. We record this result here:
 \begin{lemma}
 \label{lem:comparaison}
  \begin{equation}
 \lim_{L\to \infty} \max_{t\in [0,T]} \max_{x\in [0,1]}\frac{1}{L} | \Phi(\lfloor xL \rfloor,tL^2)  -L\phi(x,t)| =0.
 \end{equation}
\end{lemma}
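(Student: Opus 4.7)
The plan is to realize $\Phi$ and $\phi$ as expectations of the initial condition along a continuous-time simple random walk and a Brownian motion respectively, and then deduce the result from the invariance principle together with an equicontinuity argument. Writing $u^L(y,t):=L^{-1}\Phi(\lfloor yL\rfloor,tL^2)$, the goal is uniform convergence $u^L\to\phi$ on $[0,1]\times[0,T]$.

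\emph{Step 1 (probabilistic representation).} The operator $\tfrac12\Delta$ is the generator of the continuous-time random walk $(Y_s)_{s\ge0}$ on $\mathbb Z$ that jumps with rate $1/2$ to each nearest neighbour. Let $\tau^L$ be its exit time from $\{1,\ldots,L-1\}$. Using that $h^{0}_{0}=0$ and $h^{0}_{L}$ are preserved as boundary values of $\Phi$, one has
\[
\Phi(x,t)=\mathbb E_x\bigl[h^0_{Y(t\wedge\tau^L)}\bigr],\qquad
\phi(y,t)=\mathbb E_y\bigl[\phi^0(B_{t\wedge T})\bigr],
\]
where $B$ is standard Brownian motion and $T$ its exit time from $(0,1)$. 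Rescaling the walk as $\tilde Y^L_s:=Y(L^2 s)/L$ yields $u^L(y,t)=\mathbb E_y\bigl[L^{-1}h^{0}_{L\tilde Y^L_{t\wedge \tilde\tau^L}}\bigr]$ with $\tilde\tau^L=\tau^L/L^2$.

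\emph{Step 2 (pointwise convergence).} By Donsker's theorem applied on the time interval $[0,T]$, for every $y\in[0,1]$ one can couple $\tilde Y^L$ starting at (a lattice site close to) $yL$ with a Brownian motion $B$ starting at $y$ in such a way that $\sup_{s\le T}|\tilde Y^L_s-B_s|\to 0$ in probability. Continuity of $B$ then gives $\tilde\tau^L\wedge t\to T\wedge t$ in probability. Combined with the definition \eqref{defho} of $h^0$, which ensures $|L^{-1}h^0_{\lfloor zL\rfloor}-\phi^0(z)|\le 2/L$ uniformly in $z\in[0,1]$, and with the uniform continuity of $\phi^0$ on $[0,1]$, dominated convergence yields $u^L(y,t)\to\phi(y,t)$ for each fixed $(y,t)$.

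\emph{Step 3 (equicontinuity).} To upgrade to uniform convergence it suffices to check that $\{u^L\}_L$ is equicontinuous on $[0,1]\times[0,T]$. For time continuity, the Markov property gives, for $s\le t$,
\[
u^L(y,t)-u^L(y,s)=\mathbb E\bigl[L^{-1}h^0_{Y(tL^2\wedge\tau^L)}-L^{-1}h^{0}_{Y(sL^2\wedge\tau^L)}\bigr],
\]
and the Lipschitz property of $h^0$ combined with the standard bound $\mathbb E|Y(tL^2)-Y(sL^2)|\le C L\sqrt{t-s}$ gives an $O(\sqrt{t-s})+O(1/L)$ modulus. Space continuity is analogous, using that $y\mapsto\Phi(\lfloor yL\rfloor,tL^2)$ is $O(1)$-Lipschitz in $\lfloor yL\rfloor$ (this is preserved by the discrete heat evolution, which is order-preserving and commutes with additive constants), so $|u^L(y,t)-u^L(y',t)|\le |y-y'|+O(1/L)$.

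\emph{Step 4 (conclusion).} Equicontinuity together with pointwise convergence on the compact set $[0,1]\times[0,T]$ gives uniform convergence by Arzel\`a--Ascoli, which is exactly the statement of the lemma. The only delicate point is the interaction of the killed walk with the boundary $\{0,L\}$: the match between the boundary conditions of $\Phi$ and $\tilde\Phi^L(x,t):=L\phi(x/L,t/L^2)$ is only up to an $O(1)$ error at $x=L$ (because $h^0_L\ne L\phi^0(1)$ in general), but this becomes negligible after dividing by $L$, so it does not affect the argument. An equivalent PDE-based alternative would be to show that $\tilde\Phi^L$ satisfies the discrete heat equation up to a source $g_L=O(L^{-3}\|\partial_y^4\phi\|_\infty)$ and to apply the discrete maximum principle; this works away from $t=0$ and one handles small times separately using the $\sqrt t$-continuity of both heat flows, but the probabilistic argument above has the advantage of being uniform down to $t=0$.
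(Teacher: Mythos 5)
The paper states Lemma~\ref{lem:comparaison} without proof, referring to it as ``a standard result'' about convergence of the discrete-space heat equation to the continuous one in the diffusive scaling, so there is no in-paper argument to compare against.

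Your probabilistic route is correct and self-contained. A few points could be tightened. In Step~2, the convergence $\tilde\tau^L\wedge t\to T\wedge t$ does not follow from ``continuity of $B$'' alone --- hitting times are not continuous functionals of paths in general. What makes it work here is the a.s.\ sample-path property of one-dimensional Brownian motion that it crosses the boundary of $(0,1)$ immediately after first touching it (regularity of $\{0\}$ and $\{1\}$), combined with the fact that the unrescaled walk $Y$ has unit steps, so that whenever $B$ dips below $0$ (resp.\ above $1$) by any positive amount the nearby walk must actually hit the boundary. In Step~3, the space-equicontinuity of $u^L$ is easiest to justify not via an ``order-preserving'' argument on the heat semigroup (the mismatch in boundary conditions after translation makes that delicate) but by recalling that $\Phi(x,t)=\bbE[h_x(t)]$, where $h(t)\in\Omega_{M_L,N_L}$ is a.s.\ a $\pm1$-increment path; averaging preserves the $1$-Lipschitz property, and this is exactly the observation the paper makes just before Proposition~\ref{prop:convLdeux}. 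Finally, your citation of \eqref{defho} points to the initial condition for Theorem~\ref{spo}, not for Theorem~\ref{mickey}; the bound $|L^{-1}h^0_{\lfloor zL\rfloor}-\phi^0(z)|\le 2/L$ you actually need follows directly from the (unnumbered) display defining $h^0$ just before Theorem~\ref{mickey}. With these caveats resolved, the argument via Donsker and Arzel\`a--Ascoli gives the desired uniform convergence on $[0,1]\times[0,T]$, including at $t=0$, which is where the alternative finite-difference/maximum-principle route you mention would indeed require extra care.
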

Using Lemma \ref{lem:comparaison}, we are reduced to prove
\begin{equation}\label{convlinf}
\lim_{L\to \infty} \bbP\left[ \max_{t\in [0,TL^2]} \max_{x\in \{1,\dots,L-1\}} | h_{ x} (t)  -\Phi(x,t)|<\gep L \right]=1.
\end{equation}
Both $h_{\cdot}(t)$ and $\Phi(\cdot,t)$ are $1$-Lipschitz functions
(for all $t$) so that $|h_{\cdot}(t)-\Phi(\cdot,t)|$ is $2$-Lipschitz
and 
\begin{equation}
\left\{\max_{x\in \{1,\dots,L-1\}} | h_{ x} (t)  -\Phi(x,t)|\geq a \right\}\Longrightarrow
\left\{ \sum_{x=1}^{L-1} \left[h_x(t)-\Phi(x,t)\right]^2\geq a^3/3\right\}.
\end{equation}
As a consequence, \eqref{convlinf} is equivalent to prove the following $\bbL_2$ convergence statement:
\begin{proposition}
\label{prop:convLdeux}
The following convergence in probability holds:
\begin{equation}
\label{eq:convLdeux}
\lim_{L\to \infty}  \sup_{t\in[0,L^2T]} \frac{1}{L^3}\sum_{x=1}^{L-1} \left[h_x(t)-\Phi(x,t)\right]^2=0.
\end{equation}
\end{proposition}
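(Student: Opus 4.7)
The plan is to exploit the fact that $\Phi(x,t) = \bbE[h_x(t)]$ (as noted immediately after its definition) and to work with the centered process $M_x(t) := h_x(t) - \Phi(x,t)$ via energy estimates for what is essentially a discrete stochastic heat equation. A direct inspection of the corner-flip rules \eqref{eq:defho}--\eqref{defL1} shows that $\mathcal L h_x = \tfrac12 \Delta h_x$ pointwise (at a peak, $\Delta h_x = -2$ and the drift equals $\tfrac12\cdot(-2) = -1$; symmetrically at a valley; elsewhere both sides vanish). Combined with $\partial_t \Phi = \tfrac12 \Delta \Phi$, this yields the Dynkin-type decomposition
\[
  M_x(t) \;=\; \int_0^t \tfrac12 \Delta M_x(s)\, \dd s \,+\, N_x(t), \qquad M_x(0)=0,
\]
with each $N_x$ a mean-zero martingale of quadratic variation $\dd\langle N_x\rangle_s = 2\, P_x(s)\, \dd s$, where $P_x(s) := \mathbf 1_{\{x \text{ is a corner of } h(s)\}}$. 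Crucially, since the Markov chain is driven by independent site Poisson clocks, no two $N_x, N_y$ jump simultaneously, so $\langle N_x, N_y\rangle \equiv 0$ for $x \neq y$.

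The main estimate is a pointwise-in-$t$ bound $\bbE[V(t)] \to 0$ with $V(t) := L^{-3}\sum_x M_x(t)^2$. Discrete Duhamel gives the representation $M_x(t) = \sum_y \int_0^t q^L_{t-s}(x,y)\, \dd N_y(s)$, where $q^L_\tau$ is the kernel of the semigroup $e^{\tau\Delta/2}$ on $\{1,\ldots,L-1\}$ with Dirichlet boundary conditions. Combining Itô's isometry (valid termwise by orthogonality of the $N_y$'s), the Chapman--Kolmogorov identity $\sum_x q^L_\tau(x,y)^2 = q^L_{2\tau}(y,y)$, and the standard on-diagonal bound $q^L_\tau(y,y) \le C/\sqrt\tau$ (from the local CLT for continuous-time simple random walk, with the Dirichlet condition only decreasing $q^L$), one obtains
\[
  \sum_x \bbE[M_x(t)^2] \;=\; 2\sum_y \int_0^t P_y(s)\, q^L_{2(t-s)}(y,y)\, \dd s \;\le\; 2CL\int_0^t \frac{\dd s}{\sqrt{t-s}} \;\le\; C' L \sqrt{t},
\]
using $\sum_y P_y(s) \le L-1$. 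Evaluated at $t = T L^2$ this yields $\bbE[V(t)] \le C'\sqrt{T}/L = o(1)$, which is the pointwise-in-$t$ $\bbL^1$ version of the claim.

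Finally, to promote the pointwise bound to a supremum over $t \in [0, TL^2]$, I would use the Doob--Meyer decomposition $\sum_x M_x^2(t) = A(t) + \mathcal M(t)$, whose finite-variation part satisfies $A(t) - A(s) \le 2L(t-s)$ (drop the non-positive dissipation term and bound $\sum_y P_y \le L$), and apply Doob's $\bbL^2$ maximal inequality to $\mathcal M$ on short sub-intervals $[t_k, t_{k+1}]$ of a grid of size $n$; the quadratic variation of $\mathcal M$ on such an interval is controlled using the crude deterministic bound $\sum_x M_x^2 \le 4L^3$. A union bound over the $n$ grid times, combined with the pointwise $\bbL^1$ estimate from the previous paragraph applied at each $t_k$, yields $\sup_{t \le TL^2} V(t) \to 0$ in probability once $n$ is chosen (e.g.\ $n = L^{1/2}$) to balance the three error sources. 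The main obstacle is precisely this last step: the pointwise bound $\bbE[V(t)] = O(1/L)$ does not self-improve to a uniform-in-$t$ statement, because the crude $\bbL^\infty$ bound $|M_x| \le 2L$ forces $\mathcal M$ to have quadratic variation of order $L^3 t$, making Doob's maximal inequality alone too weak. One must genuinely combine Doob on short intervals with the $\bbL^1$ estimate on a time grid.
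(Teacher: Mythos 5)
Your proposal is correct, but it follows a genuinely different path from the paper, so a comparison is in order. You work in physical space: write $M_x(t)=h_x(t)-\Phi(x,t)$, observe $\mathcal L h_x=\tfrac12\Delta h_x$ (the paper records this identity too, in the proof of Lemma~\ref{lem:controlH}), decompose via stochastic Duhamel against the Dirichlet heat kernel $q^L$, and use It\^o's isometry plus the Chapman--Kolmogorov trick $\sum_x q^L_\tau(x,y)^2=q^L_{2\tau}(y,y)$ together with the on-diagonal bound $q^L_\tau(y,y)\le\min(1,C/\sqrt\tau)$ to get the pointwise-in-$t$ estimate $\bbE\bigl[\sum_x M_x(t)^2\bigr]\le C L\sqrt t$, hence $\bbE[V(t)]=O(1/L)$. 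You then notice -- correctly -- that this does not self-improve to a $\sup_t$ statement, and you repair it by a grid argument: Doob's $\bbL^2$ maximal inequality on short time windows of length $TL^2/n$ (quadratic variation of $\sum_x M_x^2(\cdot)$ bounded crudely by $O(L^3)$ per unit time via $|M_x|\le 2L$), control of the finite-variation part by $A(t)-A(s)\le 2L(t-s)$, and a union bound over the $n$ grid times using the pointwise $\bbL^1$ bound. With, say, $n=L^{1/2}$ this gives $\sup_{t\le TL^2}\sum_x M_x^2(t)=O(L^{11/4})=o(L^3)$ w.h.p., which is the claim; I checked that the exponents balance as you indicate. The paper instead works in Fourier space: it projects $M_\cdot(t)$ onto the Dirichlet Laplacian eigenfunctions $\sin(k\pi x/L)$ to get coordinates $H^k_t$, shows that $e^{\lambda_k t/2}H^k_t$ is a \emph{martingale} for each $k$, and applies Doob's inequality directly mode by mode for the low frequencies $k\le(\log L)^{1/3}$, while the high modes are killed deterministically by the summation-by-parts bound $|H^k_t|\le 4L^2/k$. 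The advantage of the paper's mode-by-mode martingale structure is precisely that it yields the supremum over $t\in[0,TL^2]$ in one shot, with no time grid, no use of the crude $\bbL^\infty$ bound $|M_x|\le 2L$, and no separate handling of a drift term; this is why the uniform-in-$t$ claim comes out more cleanly there. Your approach is sound and perhaps more ``physical'', but it pays for the simplicity of the pointwise estimate with the extra layer needed to uniformize in time.
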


\begin{proof}[Proof of Proposition \ref{prop:convLdeux}]
The restriction of the operator $\gD$ to \[\gL_L
=\{ g:\{0,\dots,L\}\mapsto \bbR, g(0)=g(L)=0\}
\]
  is self-adjoint (for the canonical scalar product on $\bbR^{L-1}$ denoted in the sequel by 
$\langle \cdot, \cdot \rangle$) 
and the family of functions 
\begin{equation} \begin{split}
f_k:\{0,\dots,L\}\ni x \mapsto \sqrt{\frac2L}\sin\left(\frac{k\pi x}{L}\right),\quad k=1,\dots,L-1
\end{split}\end{equation}
forms an orthonormal basis of $\gL_L$ of $\gD$-eigenfunctions, with respective eigenvalues 
\begin{equation}
-\gl_k:= 2\cos\left(\frac{\pi k}{L}\right)-2<0.
\end{equation}
As the function $x\mapsto h_x(t)-\Phi(x,t)$ is in $\gL_L$ it can be decomposed on this basis.
 We use the notation $H_t^k$ for its $k-$th coordinate (multiplied by $\sqrt{L/2}$ for convenience):
\begin{equation}
H_t^k:=\sum_{x=0}^L \left[h_x(t)-\Phi(x,t)\right]\sin\left(\frac{k\pi x}{L}\right).
\end{equation}
The quantity one wants to estimate in \eqref{eq:convLdeux} is equal to
\begin{equation}
\label{eq:inter}
\sup_{t\in[0,L^2 T]}  \frac{2}{L^4}\sum_{k=1}^{L-1} (H_t^k)^2
\le \frac{2}{L^4}\sum_{k=1}^{L-1} \sup_{t\in[0,L^2 T]} (H_t^k)^2.
\end{equation}
 We control the right-hand side\ by controlling each $H_t^k$ separately.
\begin{lemma}
\label{lem:controlH}
For every $L$, $k\in\{1,\dots,L-1\}$ and $t>0$ one has deterministically
\begin{equation}
\label{eq:controlH1}
|H_t^k|\le 4 L^2/k.
\end{equation}
Moreover for any given $T$, w.h.p.\
\begin{equation}
\label{eq:controlH2}
 |H_t^k|\le L^{7/4}\quad \mbox{for every}\quad k\le (\log L)^{1/3}\quad\mbox{and
every}\quad t \le L^2 T.
\end{equation}
\end{lemma}

\begin{proof}[Proof of Lemma \ref{lem:controlH}]
The first point is easy. Using summation by parts
\begin{eqnarray}
 H_t^k=
 \sum_{x=1}^{L}\left(\left[h_x(t)-\Phi(x,t)\right]-\left[h_{x-1}(t)-\Phi(x-1,t)\right]\right)\sum_{y=x}^L \sin\left(\frac{k\pi y}{L}\right).
\end{eqnarray}
Then one can check that for every $x$ and $k$
\begin{gather}
\left|\left[h_x(t)-\Phi(x,t)\right]-\left[h_{x-1}(t)-\Phi(x-1,t)\right]
\right|\le 2,\\
\left|\sum_{y=x}^L \sin\left(\frac{k\pi y}{L}\right)
\right|
\le 
\frac{2L}k 
\end{gather}
so that \eqref{eq:controlH1} follows.

For the second point, first, one notices that for all $k\in \{1,\dots,L-1\}$, the functions
\begin{equation}\label{eq:fpro}
\begin{split}
F_k: \gO_{M_L,N_L}\ni
h\mapsto \sum_{x=0}^L \sin\left(\frac{\pi k x}{L}\right)  \left[h_x-\frac{h_L-h_0}{L}x\right]
\end{split}
\end{equation}
are eigenfunctions of $\mathcal L$ with respective eigenvalues $-\gl_k/2$.
Indeed $F_k$ is just a linear combination of the coordinate function $A_x: h\mapsto h_x$ (plus a constant), and it can be seen from the 
very definition \eqref{defL1} of the generator $\mathcal L$, that
\begin{equation}
 \mathcal L (A_x)(h)=\frac12(\gD h)(x)=\frac12(\gD \tilde h)(x)
\end{equation}
using the notation $\tilde h_x= h_x-\frac{h_L-h_0}{L}x$.  Hence
(note that $\tilde h \in \gL_L$)
\begin{multline}
2 \mathcal L F_k(h)= 
\sqrt{L/2}\langle f_k, \gD \tilde h\rangle=\sqrt{L/2}\langle \gD f_k, \tilde h\rangle= 
-\gl_k\sqrt{L/2}\langle f_k, \tilde h\rangle= -\gl_k F_k(h).
\end{multline}
As a consequence one can rewrite 
\begin{equation}
 H_t^k=\sum_{x=0}^L \sin\left(\frac{k\pi x}{L}\right)\tilde h_x(t)-e^{-\gl_kt/2} 
\sum_{x=0}^L \sin\left(\frac{k\pi x}{L}\right)\tilde h_x(0)
\end{equation}
and notice that
$ M_t^k:= e^{\gl_kt/2}H_t^k$
is a martingale. Therefore one can get the result by computing the second moment of
$ M_t^k$ and using Doob's inequality.

It is not  difficult to bound the quadratic variation of $M^k$. Notice that
\begin{equation}
 \bbE\left[ (M_t^k)^2\right]=\bbE \left[\int_{0}^t \dd \langle M^k\rangle_s\right]
\end{equation}
and that
\begin{equation}
 \dd \langle M^k\rangle_s=e^{\gl_k s}\dd \langle H^k\rangle_s
=e^{\gl_k s}\sum_{k=1}^{L-1} \sin^2\left(\frac{k\pi x}{L}\right)\frac{(\gD(h(t))(x))^2}4 \dd s\le Le^{\gl_k s}\dd s
\end{equation}
so that 
$  \bbE\left[ (M_t^k)^2\right]\le L\int_{0}^t e^{\gl_ks}\dd s.$
Therefore (using $\lambda_k=\pi^2 k^2/L^2(1+o(1))$ uniformly for all $k\leq  (\log L)^{1/3}$),
\begin{equation}
\bbP\left[\sup_{t\in[0,L^2 T]} |H_t^k|\ge  a \right]\le  
\bbP\left[\sup_{t\in[0,L^2 T]} |M_t^k|\ge  a \right]\le C \frac{L^3e^{\gl_k L^2 T}}{a^2k^2}.
\end{equation}
Using this inequality for $a:=L^{7/4}$ and all $k \leq (\log L)^{1/3}$ one gets that
\begin{equation}
\bbP\left[\exists t\in[0,L^2T], \ \exists k\leq (\log L)^{1/3}, \  |H_t^k|\ge L^{7/4}\right]\le
\sum_{k\leq (\log L)^{1/3}}\frac{C}{k^2\sqrt L}e^{k^2\pi^2 T}. 
\end{equation}
One can check that the right-hand side\ above tends to zero when $L$ goes to infinity, which finishes the proof of Lemma \ref{lem:controlH}.
\end{proof}

We now turn  to \eqref{eq:inter}:
 \begin{equation}
\frac{2}{L^4}\sum_{k=1}^{L-1} \sup_{t\in[0,L^2 T]} (H_t^k)^2  \le \frac{2}{L^4}\sum_{k\leq (\log L)^{1/3}} \sup_{t\in[0,L^2 T]} (H_t^k)^2+ 32\sum_{k=\lceil(\log L)^{1/3}\rceil}^L k^{-2}.
\end{equation}
The second term tends to zero (it is roughly $(\log L)^{-1/3}$). The
first one is w.h.p. less than
\begin{equation}
\frac{2}{L^4}\sum_{k\leq (\log L)^{1/3}} L^{7/2}\leq \frac{\log L}{\sqrt{L}}.
\end{equation}
This achieves the proof of Proposition \ref{prop:convLdeux} and thus also the one of Theorem \ref{mickey}.
\end{proof}


\appendix
\renewcommand{\thesubsection}{\Alph{section}.\arabic{subsection}}

\section{Proof of Theorem \ref{spo}: scaling limit for the 
zero-range process}
\label{sec:pots}

This section follows quite closely computations in Appendix A of \cite{cf:Spohn}.
\subsection{Particle system and monotonicity}
For $x\in\{- L,\ldots, L\}$ we denote $\eta_x:=h_{x+1}-h_x$ the
discrete gradient of $h$ in $x$.  A configuration $h\in \gO_L$ can be
alternatively given by $\eta\in\Theta_{L}:=\{\eta:\{- L,\ldots, L\}\to
\bbZ\}$.  It turns out that the {\emph zero-range process} description
of the dynamics (cf.\ Section \ref{sub:dap}) is easier to work with.

For a more formal description of the dynamics we write explicitly its generator.
For $\eta\in\Theta_{L}$ and $x\in \{- L,\ldots, L-1\}$, we define the configuration $\stackrel{\rightarrow}{\eta}^{(x)}$
as 
\begin{equation}\begin{split}
{\stackrel{\rightarrow}{\eta}}^{(x)}(x)&:=\eta_{x}-\sign(\eta_x),\\
{\stackrel{\rightarrow}{\eta}}^{(x)}(x+1)&:=\eta_{x+1}+\sign(\eta_x),\\
{\stackrel{\rightarrow}{\eta}}^{(x)}(y)&:=\eta_y, \quad \forall y\notin \{x,x+1\}.
\end{split}\end{equation}
We define $\stackrel{\leftarrow}{\eta}^{(x)}$ analogously for $x \in \{-L+1,\ldots, L\}$ replacing $x+1$ in the second and third lines by $x-1$.
The sign function $\sign$ is given by
\begin{equation}
\sign(a):=
\begin{cases} 
1 \text{ if } a >0,\\
 -1 \text{ if } a <0,\\
0 \text{ if } a=0.
\end{cases}
\end{equation}

The generator of the chain seen in the state-space  $\Theta_{L}$ is given by
\begin{equation}\label{dnaspo2}
\mathcal L f:= \frac12\sum_{x=-L}^{ L-1} \left[f ({\stackrel{\rightarrow}{\eta}}^{(x)})+ f ({\stackrel{\leftarrow}{\eta}}^{(x+1)})-2 f(\eta)\right].
\end{equation}
Note that the dynamics conserves the sum of the $\eta$'s, i.e.\ the value
of $h_{L+1}$.

Before going to the core of the proof, we need to change slightly the
initial condition. In order to compare with the original one, one needs
the following monotonicity statement:
\begin{proposition}\label{fuzz}[Coupling]
\begin{itemize}
\item[(i)] There is a canonical way of constructing simultaneously the
  dynamics with generator \eqref{dnaspo} from all possible initial
  configurations $h^0$.  It satisfies the following monotonicity
  property: given $h^0$ and $\bar h^0$ with $h^0_x\ge \bar h^0_x$ for
  all $x$, the dynamics $h$ and $\bar h$ starting from $h^0$ and $\bar
  h^0$ respectively satisfy
$ \quad h_x(t)\ge\bar h_x(t) $ for every $t$ and $x$.
Moreover, the dynamics started from $h^0+a$,
$a\in \bbZ$, (a vertically translated version of $h^0$, including the boundary
conditions $h_0$ and $h_{L+1}$), is simply $(h(t)+a)_{t\ge 0}$.
\item[(ii)] 
There is a canonical  way of constructing  the dynamics with generator \eqref{dnaspo2}   from all possible initial configurations 
$\eta^0$. 
It satisfies the following monotonicity property: given $\eta^0$ and $\bar \eta^0$ with $\eta^0_x\ge \bar \eta^0_x$ for all $x$, the dynamics $\eta$ and $\bar \eta $ starting 
from $\eta^0$ and $\bar \eta^0$ respectively satisfy
 $  \eta_x(t)\ge\bar \eta_x(t)$ for every $t$ and $x$.

\end{itemize}
\end{proposition}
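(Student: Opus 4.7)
The plan is to construct both dynamics via a basic (``graphical'') coupling with shared Poisson clocks, and then verify monotonicity by a short case analysis at each allowed update.

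For part (i), I would attach to each ordered pair of neighbors $(x,y)$ with $x\in\{-L+1,\dots,L\}$ and $y=x\pm 1$ an independent rate-$1/2$ Poisson clock; when the clock at $(x,y)$ rings, set $h_x\to h_x+1$ if $h_y>h_x$, $h_x\to h_x-1$ if $h_y<h_x$, and otherwise leave $h_x$ unchanged (the boundary values $h_{-L},h_{L+1}$ are never updated). The rate of an up-move at $x$ is $\tfrac12(\mathbf{1}_{h_{x-1}>h_x}+\mathbf{1}_{h_{x+1}>h_x})=c^{+,x}(h)/2$, and similarly for down-moves, so this reproduces the generator \eqref{dnaspo}. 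Translation invariance is immediate because the update rule depends only on the sign of $h_y-h_x$, so adding $a\in\bbZ$ to $h^0$ (and to the boundary values) yields a trajectory shifted by $a$.

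To check that $h\ge\bar h$ is preserved when the clock at $(x,y)$ rings, note that only $h_x$ and $\bar h_x$ can change. The unique potentially dangerous configuration is $h_x$ decreasing by one while $\bar h_x$ increases by one, which would require $h_y<h_x$ and $\bar h_y>\bar h_x$. Combined with $h_y\ge\bar h_y$ and $h_x\ge\bar h_x$, this forces $\bar h_x+1\le\bar h_y\le h_y\le h_x-1$, hence $h_x-\bar h_x\ge 2$, and the gap after the move is still nonnegative. The remaining combinations either leave $h_x-\bar h_x$ unchanged or can only increase it, so monotonicity holds.

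For part (ii), I would attach to each edge $(x,x+1)$, $x\in\{-L,\dots,L-1\}$, two independent rate-$1/2$ Poisson clocks producing the moves $\stackrel{\rightarrow}{\eta}^{(x)}$ and $\stackrel{\leftarrow}{\eta}^{(x+1)}$ respectively; this matches \eqref{dnaspo2} at once. For monotonicity at a rightward ring of edge $x$, write the changes in the gap $d_y:=\eta_y-\bar\eta_y$ as $d_x'=d_x-[\sign(\eta_x)-\sign(\bar\eta_x)]$ and $d_{x+1}'=d_{x+1}+[\sign(\eta_x)-\sign(\bar\eta_x)]$. The bracket is nonnegative by monotonicity of $\sign$ together with $\eta\ge\bar\eta$, so $d_{x+1}'\ge d_{x+1}\ge 0$. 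For $d_x'$, the worst case is $\sign(\eta_x)=1,\sign(\bar\eta_x)=-1$, which forces $\eta_x\ge 1,\bar\eta_x\le -1$, hence $d_x\ge 2$ and $d_x'\ge 0$; the intermediate cases $(1,0)$ and $(0,-1)$ similarly yield $d_x\ge 1$. Leftward rings are symmetric, and the bookkeeping is the same. The main (and really only) subtlety is the sign-mismatch case in (ii), and its $h$-version in (i), where the coupling produces updates in opposite directions at $x$; $\bbZ$-integrality of the configurations provides exactly the gap of $2$ needed to absorb this, and everything else reduces to a routine finite case check.
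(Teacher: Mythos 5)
Your construction is the one the paper uses: two independent rate-$1/2$ clocks per interior site, one for the left neighbor and one for the right, and the height moves by $\sign(h_y-h_x)$ at each ring; this is exactly the canonical coupling written out in the paper. One small inaccuracy in part~(i): the claim that the $(-1,+1)$ mismatch is ``the unique potentially dangerous configuration'', with all remaining combinations leaving $h_x-\bar h_x$ unchanged or increasing it, is not literally true. The gap can also drop by exactly one, namely when the pair $\bigl(\sign(h_y-h_x),\sign(\bar h_y-\bar h_x)\bigr)$ is $(0,1)$ or $(-1,0)$; those cases still need the sandwich $h_y\ge\bar h_y$, $h_x\ge\bar h_x$ to conclude $h_x-\bar h_x\ge 1$, so the integrality argument is the same, and this is only a gap in the write-up rather than in the proof. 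For part~(ii), the paper just asserts that the $h$-coupling ``induces a coupling on $\eta$ that also has the right properties''; note that $\eta\ge\bar\eta$ is equivalent to $x\mapsto h_x-\bar h_x$ being nondecreasing, which is a different invariant than $h\ge\bar h$ and requires its own check. Your direct zero-range construction and case analysis is precisely that missing check rewritten in gradient coordinates, and it is correct.
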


\begin{proof}
The idea of the proof is using a canonical construction of the process, similarly to what is done in Section \ref{sub:gcm}.
It is quite classic but we perfom it here for the sake of completeness.
\begin{itemize}
\item For $x\in \{-L+1,L\}$ we define $(\tau_{n,x})_{n\ge0}$ and
  $(\tau'_{n,x})_{n\ge0}$ to be two IID clock processes, with
  $\tau_{0,x}=0$ and $\tau_{n+1,x}-\tau_{n,x}$ IID 
  exponential variables of mean $2$.
 
\item The process $h(\cdot)$ is  c\`adl\`ag and constant in time except at the of the
  ringing times of the clock processes.  At time $\tau_{n,x}$ only
  $h_x$ is modified, as follows:
  $h_x(\tau_{n,x})=h_x(\tau_{n,x}^-)+\sign(h_{x-1}(\tau_{n,x}^-)-h_x(\tau_{n,x}^-))$, the other
  coordinates being left unchanged.  At time $\tau'_{n,x}$ only $h_x$
  is modified, as follows:
  $h_x(\tau'_{n,x})=h_x((\tau'_{n,x})^-)+\sign(h_{x+1}((\tau'_{n,x})^-)-h_x((\tau'_{n,x})^-))$, the other
  coordinates being left unchanged.
\end{itemize}
The reader can check that this allows to couple the dynamics from all possible initial conditions and that our coupling has the desired properties.
This coupling induces a coupling on $\eta$ that also has the right properties.

\end{proof}

\subsection{Changing the initial condition}

We prove \eqref{atrusk} working with an initial condition which is not
the one, $h^{0}$, described in \eqref{defho}, which is random and for
which the number of particle at a site is given by a geometric
variable.  The reason for this change of initial condition will appear
in the proof of $(iii)$ in Lemma \ref{reloumec}.  We explain in this
section why this implies the result starting from $h^{0}$.

Given a continuous function $\phi^0: [-1,1]\to \bbR$ with
$\phi^0(\pm1)=0$ and with a finite number of changes of monotonicity,
set $(\hat\eta_{x})_{x\in \{- L,\ldots, L\}}$ to be a family of
independent variables with the following distribution: if
$\phi^0((x+1)/L)-\phi^0(x/L)\ge 0$ then $\hat\eta_x$ is a geometric
variable of mean $L(\phi^0((x+1)/L)-\phi^0(x/L))$ and if
$(\phi^0((x+1)/L)-\phi^0(x/L))<0$ then $-\hat\eta_x$ is a geometric
variable of mean $L(\phi^0(x/L)-\phi^0((x+1)/L))$ (with the convention
that $\phi^0(1+1/L)=0$).  One sets
\begin{equation}\label{geomcond}
\hat h_x^0=\sum_{y=-L}^{x-1} \hat\eta_y.
\end{equation}
Note that for every $\gep>0$, w.h.p, 
\begin{equation}\label{youth}
 \hat h^0_x-L^{1/2+\gep}\le  h^0_x\le \hat h^0_x+L^{1/2+\gep}
\quad  \text{for every\;\;
}x \in \{-L,\ldots, L+1\}.
\end{equation}
 Let
$(h(t))_{t\ge0}$, $(\hat h(t))_{t\ge0}$ be the dynamics with generator
\eqref{dnaspo} started with initial condition $h^{0}$, $\hat h^0$
respectively, constructed using the canonical way of Proposition
\ref{fuzz} (i). Then with high probability, for every $t>0$ and 
$ x \in \{-L,\ldots, L\}$
\begin{equation}
 \hat h_x(t)-L^{1/2+\gep} \le  h_x(t)\le \hat h_x(t)+L^{1/2+\gep}.
\end{equation}
Therefore in order to prove \eqref{atrusk} for $h(\cdot)$, it is
sufficient to prove it for $\hat h (\cdot)$. We let $\hat\eta_x(t)=\hat h_{x+1}(t)-\hat h_x(t)$ 
denote the gradient of $\hat h$.

\subsection{Proof of an $\bbL_2$ statement}

For $(\hat h(t))_{t\ge0}$ defined above one has
\begin{proposition}
\label{prop:propspohn}
For any $t\ge 0$
\begin{equation}
\lim_{L\to \infty}\bbE \left[\frac{1}{L^3}\sum_{x=- L}^{ L+1} (\Phi(x,L^2 t)- \hat h_x(L^2 t))^2\right]=0
\end{equation}
\end{proposition}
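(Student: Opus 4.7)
I would attack this by a Lyapunov-type estimate for the ``energy'' $\mathcal{E}(s):=\sum_{x}\bbE[\psi_x(s)^2]$, where $\psi_x(s):=\hat h_x(s)-\Phi(x,s)$, built around a cancellation that occurs under the product-geometric local equilibrium. Since by \eqref{geomcond} the increments $\hat\eta_x^0$ at time zero are independent geometric variables with means matching $q_x(0)$, one has $\bbE[\psi_x(0)]=0$ and $\bbE[\psi_x(0)^2]=\mathrm{Var}(\hat h^0_x)=O(L)$, so $\mathcal E(0)=O(L^2)$ and $\mathcal E(0)/L^3\to 0$.

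Applying Dynkin's formula together with $\mathcal L\hat h_x^2=2\hat h_x\mathcal L\hat h_x+\Gamma(\hat h_x)$, where $\Gamma(\hat h_x)=\tfrac12(\ind_{\hat\eta_x\ne 0}+\ind_{\hat\eta_{x-1}\ne 0})$ is the carr\'e du champ, gives
\[
\frac{d}{ds}\bbE[\psi_x(s)^2]=2\bbE\bigl[\psi_x(s)(\mathcal L\hat h_x-\partial_s\Phi(x,s))\bigr]+\bbE[\Gamma(\hat h_x)(s)].
\]
Using the key observation that $\mathcal L\hat h_x-\partial_s\Phi(x,s)=\tfrac12\bigl[(\sign(\hat\eta_x)-\sigma(q_x))-(\sign(\hat\eta_{x-1})-\sigma(q_{x-1}))\bigr]$ is a discrete gradient in $x$, summing over $x\in\{-L,\ldots,L+1\}$ and performing an Abel summation (which converts $\psi_{x+1}-\psi_x$ into $\hat\eta_x-q_x$; boundary terms are $o(L)$ per unit time since $\hat h_{-L}\equiv 0$ and $|\hat h_{L+1}|=O(\sqrt{L})$) yields
\[
\frac{d}{ds}\mathcal E(s)=-\sum_x\bbE\bigl[(\hat\eta_x-q_x)(\sign(\hat\eta_x)-\sigma(q_x))(s)\bigr]+\sum_x\bbE[\Gamma(\hat h_x)(s)]+o(L).
\]

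The central algebraic input is a compatibility identity under the product-geometric measure $\mu_{\vec q}:=\otimes_y\mathrm{Geom}(q_y)$ (with the convention $-\hat\eta\sim\mathrm{Geom}(|q|)$ when $q<0$): a direct computation using $\bbE_{\mu_q}[\sign(\hat\eta)]=\sigma(q)$ and $\bbE_{\mu_q}[\hat\eta\sign(\hat\eta)]=|q|$ gives
\[
\bbE_{\mu_{\vec q}}\bigl[(\hat\eta_x-q_x)(\sign(\hat\eta_x)-\sigma(q_x))\bigr]=|\sigma(q_x)|=\bbE_{\mu_{\vec q}}[\Gamma(\hat h_x)],
\]
the last equality following from $\mathbb P_{\mu_q}(\hat\eta\ne 0)=|\sigma(q)|$. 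Hence under the \emph{ansatz} that $\mathrm{Law}(\hat\eta(s))=\mu_{\vec q(s)}$ at each time, the ``entropy-production'' term exactly cancels the carr\'e du champ and one is left with $\tfrac{d}{ds}\mathcal E(s)=o(L)$, so that $\mathcal E(L^2T)=\mathcal E(0)+o(L^3)=o(L^3)$ as required. Note that this ansatz is \emph{exact} at $s=0$ by construction of $\hat h^0$, which is precisely the reason for introducing the geometric initial condition.

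The main technical obstacle is to turn this ansatz into a rigorous propagation of local equilibrium for $s>0$, as the dynamics does not preserve the family $\{\mu_{\vec q(s)}\}_{s\ge 0}$ because $q_x(s)$ evolves non-trivially. I would handle this following the scheme of Spohn in \cite[Appendix A]{cf:Spohn}, on mesoscopic blocks of size $1\ll\ell\ll L$: (i) the empirical average of $\hat\eta_y(s)$ on a block around $x$ stays within $o(1)$ of $q_x(s)$ --- which I would prove by sandwiching $\hat\eta(s)$ between stationary zero-range processes with constant densities $q_x(s)\pm o(1)$ through the monotonicity Proposition \ref{fuzz}(ii) and the regularity of $\phi^0$ --- and (ii) conditionally on its empirical density, the restriction of $\hat\eta(s)$ to the block is close in total variation to the canonical geometric ensemble with the matching density. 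Plugging (i)--(ii) into the previous identity produces an additional error $o(L)$ per unit time, which once integrated over $[0,L^2T]$ and divided by $L^3$ still tends to zero. The most delicate point is the control near sites where $q_x(s)$ changes sign, since there the two-species annihilation mechanism is active and the local equilibrium is most fragile; the assumption that $\phi^0$ has only finitely many changes of monotonicity ensures that these critical sites are macroscopically isolated, so that they contribute only a subleading correction.
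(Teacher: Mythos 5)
Your algebraic core is exactly the one the paper uses. The re-grouping of the expansion $A(x,s)=-(\hat\eta_x-q_x)(\sign(\hat\eta_x)-\sigma(q_x))+|\sign(\hat\eta_x)|$ and the key geometric-measure identity $\bbE_{\rho^{q}}[(\hat\eta-q)(\sign(\hat\eta)-\sigma(q))]=|\sigma(q)|=\bbP_{\rho^{q}}(\hat\eta\ne 0)$ are both correct and coincide (after Abel summation and matching boundary terms) with the derivation of \eqref{reza}. Noting $\mathcal E(0)=O(L^2)$ is also right.

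The strategy you propose for turning this into a proof is, however, genuinely different from the paper's, and I believe your step (i) does not hold up. Your plan requires \emph{exact} cancellation of the carr\'e du champ, which needs the local law at time $s$ to be a product geometric with the precise mean $q_x(s)$. Establishing that the block-averaged $\hat\eta$ is within $o(1)$ of $q_x(s)$ is the hydrodynamic limit for the gradient field -- essentially the statement you are trying to prove -- so that route is circular unless you have an independent input. Moreover, the sandwiching in (i) does not furnish that input: Proposition \ref{fuzz}(ii) lets you squeeze $\hat\eta(s)$ between two processes started from product geometrics, but the only ones that \emph{stay} product geometric (hence with computable block averages at time $L^2 t$) are the spatially homogeneous ones; these give the crude bounds $-\|\partial_x\phi^0\|_\infty\lesssim\frac1l\sum_{B}\hat\eta\lesssim\|\partial_x\phi^0\|_\infty$, far from $q_x(s)\pm o(1)$. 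A comparison with a process started from a non-constant profile would not be stationary, so you would be back to needing the hydrodynamic limit for the comparison process.

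What the paper does instead is weaker in two crucial ways that make the argument close without circularity. First, it never identifies the local density with $q_x(s)$: the factorization \eqref{eq:factorisation} produces the term
\begin{equation*}
-\sum_y\Bigl[q_y(s)-\tfrac1l\!\sum_{x\in B_y}\eta_x(s)\Bigr]\Bigl[\sigma(q_y(s))-\sigma\bigl(\tfrac1l\!\sum_{x\in B_y}\eta_x(s)\bigr)\Bigr],
\end{equation*}
which is pointwise $\le 0$ for \emph{any} value of the block average, because $\sigma$ is increasing. So only non-positivity of $\frac{d}{ds}\mathcal E$, not exact cancellation, is needed, and this sign is free. Second, the only local-equilibrium information required to kill the error $R(s,l,L)$ is that the \emph{space-time averaged} law $\mu^L_t$ of $\eta$ converges (along subsequences) to \emph{some} mixture $\rho^\nu$ of product geometrics (Proposition \ref{wconv}), obtained by tightness, translation invariance, the sign-preservation bound, invariance for $\mathcal L^\infty$, and Andjel's characterization -- without ever determining $\nu$. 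Your steps (i)--(ii) demand a pointwise-in-time, profile-resolved local equilibrium, which is a substantially stronger and, as far as I can see, unavailable statement by your method. I would suggest replacing (i)--(ii) by the space-time averaging and Andjel-characterization route, keeping your cancellation computation as the motivating identity behind the factorization.
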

This result does not directly imply \eqref{atrusk} ($\hat h$ may have a priori
unbounded gradients), but it is not to difficult conclude from 
Proposition \ref{prop:propspohn},
see Section \ref{sec:concluding}.
In the rest of the section, for lightness of notation we write
$h,\eta$ instead of $\hat h,\hat \eta$.

Before starting the proof we need some technical statements. First note, recalling the definition of the generator \eqref{dnaspo}, that for every $x\in\{-L+1,\dots,  L\}$
\begin{equation}\begin{split}\label{trucdement}
2\partial_t \bbE \left[h_x(t)\right]&=\bbE\left[\sign(\eta_x(t))-\sign(\eta_{x-1}(t))\right],\\
2\partial_t \bbE \left[h^2_x(t)\right]&=\bbE\left[2  h_x(t)(\sign(\eta_x(t))-\sign(\eta_{x-1}(t)))+(|\sign(\eta_x(t))|+|\sign(\eta_{x-1}(t))|)\right].
\end{split}\end{equation}
Now some remarks:
\begin{lemma}\label{reloumec}
The following properties hold (recall notations in \eqref{qs}):
\begin{itemize}
\item[(i)] $\max_{x} |q_x(t)|$ is a non-increasing function of $t$.
As a consequence 
\begin{equation}
\forall t>0,\  \forall x\in\{- L,\ldots, L\}, \ |q_x(t)|\le \|\partial_x\phi^0\|_\infty.
\end{equation}
\item[(ii)] $\max_{x} |\sigma (q_{x+1}(t))-\sigma(q_{x}(t))|$ is a
  non-increasing function of $t$ (recall $\sigma(u)=u/(1+|u|)$).
Then, using also (i), for some
  $C(\phi^0)=C(\|\partial_x\phi^0\|_\infty,\|\partial^2_x\phi^0\|_\infty)<\infty$
  one has
\begin{equation}
\label{sigmaq}
\forall t>0,\  \forall x\in\{- L,\ldots, L\}, \ |q_{x+1}(t)-q_{x}(t)|\le C(\phi^0)/L.
\end{equation}
\item[(iii)] For any $t$,  the random vectors $(\eta_x(t))_{x\in \{- L,{\ldots,} L\}}$ and  $(-\eta_x(t))_{x\in \{- L,{\ldots,} L\}}$ are stochastically dominated
by $2 L+1$ IID geometric variables with mean $\|\partial_x\phi^0\|_\infty$.
\end{itemize}
\end{lemma}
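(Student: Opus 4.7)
My plan is to prove (i) and (ii) by a discrete parabolic maximum principle applied to the deterministic evolution \eqref{eq:edpdiscret}, and (iii) by a monotone coupling with an explicitly invariant product-geometric measure.

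For (i), set $q_x(t)=\Phi(x+1,t)-\Phi(x,t)$; subtracting the equations in \eqref{eq:edpdiscret} yields at interior sites
\[
\partial_t q_x(t)=\tfrac12\bigl(\sigma(q_{x+1})-2\sigma(q_x)+\sigma(q_{x-1})\bigr),
\]
supplemented by one-sided equations at $x=\pm L$ coming from $\Phi(-L,t)=\Phi(L+1,t)=0$. At an argmax $x^{*}$ of $q(\cdot,t)$ with $q_{x^{*}}>0$, the inequalities $q_{x^{*}\pm1}\le q_{x^{*}}$ together with the monotonicity of $\sigma$ force $\partial_t q_{x^{*}}\le0$; the boundary cases are even simpler since only one neighbour appears. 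A symmetric argument bounds the minimum from below, so $\max_x q_x(t)$ is non-increasing and $\min_x q_x(t)$ non-decreasing. Combined with the mean-value bound $|q_x(0)|\le\|\partial_x\phi^0\|_\infty$, this proves (i).

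For (ii), I apply the same strategy to $r_x(t):=\sigma(q_{x+1}(t))-\sigma(q_x(t))$. A chain-rule computation using the equation for $q_x$ gives
\[
\partial_t r_x=\tfrac12\sigma'(q_{x+1})(r_{x+1}-r_x)-\tfrac12\sigma'(q_x)(r_x-r_{x-1}),
\]
and since $\sigma'>0$ the maximum principle yields non-increase of $\max_x|r_x(t)|$; at the boundary one uses the natural convention $r_{-L-1}=r_L=0$ enforced by the Dirichlet condition on $\Phi$. At $t=0$ a Taylor expansion of $\phi^0$ together with the $1$-Lipschitzness of $\sigma$ yields $\max_x|r_x(0)|\le\|\partial^2_x\phi^0\|_\infty/L$. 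To convert $\sigma$-gradients back to ordinary gradients, (i) confines $q_x(t)$ to $[-\|\partial_x\phi^0\|_\infty,\|\partial_x\phi^0\|_\infty]$, on which $\sigma^{-1}$ is Lipschitz with constant $(1+\|\partial_x\phi^0\|_\infty)^2$; this absorbs into $C(\phi^0)$ and proves \eqref{sigmaq}.

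For (iii), I use the coupling of Proposition~\ref{fuzz}(ii). Let $(\bar\eta^0_x)_x$ be IID geometric of mean $\|\partial_x\phi^0\|_\infty$, coupled to $\eta^0$ so that $\bar\eta^0_x\ge\eta^0_x$ for every $x$ (possible because geometrics are stochastically monotone in the mean, while $\bar\eta^0_x\ge0>\eta^0_x$ whenever $\eta^0_x<0$). Monotonicity then yields $\bar\eta_x(t)\ge\eta_x(t)$ for every $t$. The cone $\{\eta\ge0\}$ is invariant under \eqref{dnaspo2} (no annihilation is possible there), and on it the dynamics reduces to the symmetric nearest-neighbour zero-range process with jump function $g(n)\equiv1$ for $n\ge1$, for which every product geometric is a reversible invariant measure. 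Even though $\sum_x\eta_x$ is conserved, the product geometric decomposes as a mixture over $N$ of microcanonical measures on $\{\sum_x\eta_x=N\}$, each of which is invariant under the restricted dynamics; consequently $\bar\eta(t)$ has the same IID geometric law as $\bar\eta^0$, yielding the upper domination. The lower bound on $(-\eta_x(t))$ follows from the $\eta\mapsto-\eta$ symmetry of the generator. The main subtlety I anticipate is the boundary bookkeeping in (i)--(ii), where the equations for $q_{\pm L}$ and for $r_{-L},r_{L-1}$ are one-sided and the max-principle inequality must be checked separately; this is resolved by viewing the Dirichlet condition on $\Phi$ as effectively extending $q$ and $r$ by constants outside $\{-L,\ldots,L\}$, after which the same monotonicity argument applies verbatim.
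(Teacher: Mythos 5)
Your proof is correct and follows essentially the same route as the paper: a discrete maximum principle on $q_x$ and on $r_x=\sigma(q_{x+1})-\sigma(q_x)$ for (i)--(ii), and for (iii) a componentwise coupling of the geometric initial condition with a dominating IID geometric configuration via Proposition~\ref{fuzz}(ii), together with invariance of the product-geometric law for the dominating (all-nonnegative) dynamics. You supply slightly more detail than the paper at the boundary sites and on why the product geometric remains invariant despite the conserved total, but these are exactly the points the paper leaves implicit.
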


\begin{proof}
  For $(i)$ it is sufficient to show that $Q(t)=\max_x q_x(t)$ is
  non-increasing (by a similar argument one shows that $ \min q_x(t)$ is
  non-decreasing).  As the maximum over finitely many differentiable
  functions, $\max_x q_x(t)$ possesses a right and a left-derivative
  everywhere and the right-derivative is equal to
\begin{equation}
\partial^+_t Q(t)=\max_{x\in \argmax q_{\cdot}(t)} \partial_t q_x(t).
\end{equation}
For any $x$ in $\max_{x\in \argmax q_{\cdot}(t)}$, one has
\begin{equation}
2\partial_t q_x(t)=\sigma( q_{x+1}(t))+\sigma( q_{x-1}(t))-2\sigma( q_{x}(t))\le 0,
\end{equation}
(as $\sigma( q_{x}(t))$ is maximal), and therefore $Q(t)$ is decreasing.

For $(ii)$: Using the same argument as for the point $(i)$, we have to note that for any fixed time $T$ and $x_0$ where $\max_x [\sigma(q_{x+1})-\sigma(q_x)](T)$ is attained one has
\begin{multline}
2\left[\partial_t\{\sigma(q_{x_0+1})-\sigma(q_{x_0})\}\right](T)
=\sigma' (q_{x_0+1}(T))\left[\sigma(q_{x_0+2}(T))-\sigma(q_{x_0+1}(T))\right]
\\+\sigma' (q_{x_0}(T))\left[\sigma(q_{x_0}(T))-\sigma(q_{x_0-1}(T))\right]
\\-\left[\sigma' (q_{x_0+1}(T))+\sigma' (q_{x_0}(T))\right]\left[\sigma(q_{x_0+1}(T))-\sigma(q_{x_0}(T))\right]\le 0.
\end{multline}
Therefore, one has that 
\[
|\sigma(q_{x+1}(t))-\sigma(q_{x}(t))|\le \frac{C(\phi^0)}L.
\]
In order to deduce \eqref{sigmaq}, write
\[
\sigma(q_{x+1}(t))-\sigma(q_{x}(t))=\sigma'(y)\left[
q_{x+1}(t)-q_{x}(t)\right]
\]
for some $q_{x+1}(t)\le y\le q_{x}(t)$. Since the $q_x$ are bounded
(point $(i)$) and $\sigma(\cdot)$ has uniformly positive derivative on
bounded intervals, \eqref{sigmaq} follows.

For $(iii)$: One has that
$L\left(\phi^0(\frac{x+1}{L})-\phi^0(\frac{x}{L})\right)\le
\| \partial_x \phi^0 \|_\infty$ so that the initial configuration $\eta^0$ is
stochastically dominated by $\tilde \eta^0$ the configuration given by
$2 L+1$ IID geometric variable{s} with mean
$\|\partial_x\phi^0\|_\infty$.  According to Proposition \ref{fuzz}
(ii), one can couple the two dynamics $\eta$ and $\tilde \eta$
starting from $\eta^0$ and $\tilde \eta^0$ so that $\eta(t)\le
\tilde\eta(t)$ for all $t\ge 0$. For fixed $t$ the law of $\tilde
\eta(t)$ is the same as the one of $\tilde \eta^0$ as this
distribution is stationary for the dynamics.  The other domination is
proved in the same way.
\end{proof}

\begin{proof}[Proof of Proposition \ref{prop:propspohn}]
 We estimate the difference between $\bbE \left[\frac{1}{L^3}\sum_{x=- L}^{ L+1} (\Phi(x,L^2 t)- h_x(L^2 t))^2  \right]$ and the same quantity at time zero,
by considering it as the integral of its time-derivative.

\begin{multline}\label{develo}
\bbE \left[\frac{1}{L^3}\sum_{x=- L}^{ L+1} (\Phi(x,L^2 t)- h_x(L^2 t))^2  \right]-
\bbE \left[\frac{1}{L^3}\sum_{x=- L}^{  L+1} (\Phi(x,0)- h_x(0))^2  \right]\\
=\frac{1}{L^3}\int_0^{L^2t}  \sum_{x=- L+1}^{ L}  \partial_s\bbE\left[ (\Phi(x,s)- h_x(s))^2\right]\dd s\\
=\frac {1} {L^3} \sum_{x=- L+1}^{ L}  \int_0^{L^2t}\bbE\left\{ (\Phi(x,s)-h_x(s))\left(\sigma(q_x(s))-\sigma(q_{x-1}(s))\right)\right.\\
-\Phi(x,s)(\sign(\eta_x(s))-\sign(\eta_{x-1}(s)))\\
\left.+h_x(s)(\sign(\eta_x(s))-\sign(\eta_{x-1}(s)))+\frac12\left(|\sign(\eta_x(s))|+|\sign(\eta_{x-1}(s)|\right)\right\}\dd s\\
=\frac {1} {L^3}  \sum_{x=- L}^{ L} \int_0^{L^2t} \bbE\Big[-q_x(s)\sigma(q_x(s))+\eta_x(s)\sigma(q_x(s))+q_x(s)\sign(\eta_x(s))\\
-(|\eta_x(s)|-|\sign(\eta_x(s)|)\Big]\dd s\\\
-  \frac  1{L^3}  \int_0^{L^2t} \bbE\left[h_{ L+1}(s)(\sign(\eta_{ L}(s))-\sigma(q_{ L}(s)))+\frac{1}{2}(|\sign(\eta_{- L}(s)|+ |\sign(\eta_{ L}(s)|)\right]\dd s.
\end{multline}
The second equality is obtained by expanding the product and  using \eqref{trucdement} and \eqref{eq:edpdiscret} 
to estimate all the derivated terms.
  The third equality is obtained via summation by
parts, it gives a term that is due to boundary effect (the second one) which can be bounded as follows
\begin{multline}
L^{-3}\left| \int_0^{L^2t} \bbE \left[h_{ L+1}(s)(\sign(\eta_{ L}(s))-\sigma(q_{ L}(s)))+\frac{1}{2}(|\sign(\eta_{- L}(s))|+ |\sign(\eta_{ L}(s))|)\right]\dd s\right|
\\
\le C L^{-1}(1+\bbE  |h_{ L+1}|)=O(L^{-1/2}).
\end{multline}
Indeed $h_{ L+1}(t)$ is constant through time and is the sum of $2L+1$ independent variables.
 The mean of this sum is $0$ and the variance of each term is bounded as we supposed $\phi^0$ to be smooth. The variance of $h_{ L+1}$ is thus $O(L)$.
We can also neglect the second term in the first line as
\begin{equation}
\bbE \left[\frac{1}{L^3}\sum_{x=- L}^{  L+1} (\Phi(x,0)- h_x(0))^2 \right]=\frac{1}{L^3}\sum_{x=- L}^{  L+1} \var( h_x(0)) =O(L^{-1})
\end{equation}
where the last equality is easy to obtain once noticed that $h_x$ is the sum of $(L+x)$ independent geometric variables with bounded variance.


\medskip
Set 
\begin{equation}
\begin{split}
A(x,s)&:=-q_x(s)\sigma(q_x(s))+\eta_x(s)\sigma(q_x(s))+q_x(s)\sign(\eta_x(s))-(|\eta_x(s)|-|\sign(\eta_x(s)|).
\end{split}
\end{equation}
From the previous equations one gets that 
\begin{equation}\label{reza}
\bbE \left[\frac{1}{L^3}\sum_{x=- L+1}^{ L} (\Phi(x,L^2 t)- h_x(L^2 t))^2  \right]=\frac 1 {L^3}  \int_0^{L^2t}\sum_{x=- L+1}^{ L}  \bbE[ A(s,x)]\dd s+o(1).
\end{equation}

To understand better the rest of the proof, the reader should notice
that if  $(\eta_x(s))_{x\in\{- L,\ldots, L\}}$ were distributed
like geometric variables it would be possible to factorize
$\bbE\left[A(x,s)\right]$ in a product of negative sign and from
equation \eqref{reza} the proof would be over.  Indeed, for $q>0$ and
$\eta$ distributed like a geometric variable of mean $u>0$ (or  
$-\eta$ is distributed like a geometric variable of mean $-u>0$),
\begin{equation}
\bbE\left[-q\sigma(q)+\eta\sigma(q)+q\sign(\eta)-(|\eta|-|\sign(\eta)|)\right]=-(q-u)(\sigma(q)-\sigma(u))\le 0,
\end{equation}
(recall that $\sigma(\cdot)$ is an increasing function).  It is not
true in general that $\eta_x(s)$ are geometrically distributed for
$s>0$ but this is reasonable to think that their distribution is close
to geometric: as the system mixes locally in finite time, what one
should observe on finite but large windows is close to an equilibrium
measure, and from \cite{cf:Andjel} it is known that the only
(infinite-volume translation invariant) equilibrium measures for the
zero-range process are convex combinations of products of geometric
variables.  Most of our efforts will therefore be focused on proving
convergence to the infinite volume measure for a space-time averaged
version of the probability distribution of the $\eta_x(s)$ (using this
space-time average is somehow crucial for the proof to work).

As the limiting object is an infinite volume measure, it is somehow
more convenient to consider $\eta(s)$ as an element of $\bbZ^\bbZ$ by
periodizing it: for the system of size $(2L+1)$ 
 one sets
$ \eta_{x+k(2L+1)}=\eta_x$ for every
$ k \in \bbZ, x\in \{-L,\dots,L\}.$
For $y\in\bbZ$ one defines $\theta_y$ to be the shift operator $\eta\mapsto \theta_x\eta$ defined by
\begin{equation}\label{periodiz}
\forall x\in\bbZ, \quad ( \theta_y \eta)_x:=\eta_{x+y}.
\end{equation}

 We define for each $L>0$ the measure $\mu_t^L$ on $\bbZ^\bbZ$ our space-time averaged measures by its action on  
local functions
(for $K\in \bbN$ we call $f(\eta)$ a $K$-local function if $f$ is bounded and 
can be written as a function of $\eta_{|[-K,K]}$;
$f$ is a local function if there exists a $K$ such that it is $K$-local):
\begin{equation}
\label{eq:defmu}
\mu^L_t(f):=\bbE\left[\frac 1 {tL^2} \frac{1}{2L+1}\int_0^{L^2t} \sum^{ L}_{y=- L}f(\theta_y (\eta(s)))\dd s\right],
\end{equation}
We want to prove that any limit point (when $L\to\infty$) of $\mu^L_t$ is an equilibrium and use this information to bound 
the right-hand side of \eqref{reza}.

We introduce some notation to describe the limiting measure.
For $u\in \bbR$ define $\rho^u$ to be a measure on $\eta=(\eta_x)_{x\in \bbZ}$ such that
 the $\eta_x$ are IID geometric variables of mean $u$ if $u\ge0$ while
the $-\eta_x$ are IID geometric variables of mean $-u$ if $u<0$.
If $\nu$ is a probability measure on $\bbR$ define
\begin{equation}
\rho^\nu:= \int \rho^u \nu(\dd u).
\end{equation}

\begin{proposition}\label{wconv}
  Fix $t>0$.  For any subsequence of $(\mu^{L_n}_t)_{n\ge 0}$, it is
  possible to find a sub-subsequence $(\mu^{L'_n}_t)_{n\ge 0}$ that converges locally to $\rho^\nu$ with 
  $\nu$ a probability measure on $\bbR$ with support included in
  $[-\| \partial_x \phi^0\|_{\infty},\| \partial_x \phi^0\|_{\infty}]$,
   in the sense that for any
  local function $f$
\begin{equation}\label{weakconv}
\lim_{n\to\infty} \mu^{L'_n}_t(f)=\rho^{\nu}(f).
\end{equation}
As a consequence for any local function $f$
\begin{equation}
\limsup_{L\to\infty} \mu^{L}_t(f)\le \max_{u\in [-\| \partial_x \phi^0\|_{\infty},\| \partial_x \phi^0\|_{\infty}]}  \rho^u (f).
\end{equation}
\end{proposition}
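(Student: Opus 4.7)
The plan is to follow the standard hydrodynamic-limit strategy: establish tightness of $(\mu^L_t)_L$, show that every limit point is translation invariant and stationary for the infinite-volume zero-range dynamics, and then invoke the classification \cite{cf:Andjel} to recognize it as $\rho^\nu$. Tightness on $\bbZ^{\bbZ}$ (product topology) reduces to tightness of finite-dimensional marginals, which follows at once from Lemma \ref{reloumec} $(iii)$: the marginal law of $\eta_x$ under $\mu^L_t$ is stochastically dominated by a geometric variable with parameter depending only on $M:=\|\partial_x\phi^0\|_\infty$.

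Translation invariance of a limit $\mu_\infty$ is a consequence of the spatial averaging defining $\mu^L_t$: for any bounded local $f$ and $y\in\bbZ$, the difference $\mu^L_t(f\circ\theta_y)-\mu^L_t(f)$ telescopes to a boundary contribution of order $|y|\,\|f\|_\infty/L$. For stationarity, I would observe that if $f$ is $K$-local and $|z|\le L-K-1$, then under the periodization \eqref{periodiz} the infinite-volume generator applied to $f\circ\theta_z$ agrees with the action of the finite-volume generator \eqref{dnaspo2}, so
\begin{equation*}
\bbE\bigl[(\mathcal L f)(\theta_z\eta(s))\bigr]=\partial_s\bbE\bigl[f(\theta_z\eta(s))\bigr].
\end{equation*}
Time-integration then bounds the bulk contribution to $\mu^L_t(\mathcal L f)$ by $O(L\|f\|_\infty)/[tL^2(2L+1)]=O(1/(tL^2))$, while the $O(K)$ boundary indices contribute $O(K/L)$; both vanish as $L\to\infty$.

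Having $\mu_\infty$ translation invariant and stationary, the classification \cite{cf:Andjel} gives $\mu_\infty=\rho^\nu$ for some probability measure $\nu$ on $\bbR$. To control the support of $\nu$, I would push Lemma \ref{reloumec} $(iii)$ to the limit:
\begin{equation*}
\rho^\nu(\eta_0\ge k)=\int_0^{\infty}\bigl(u/(1+u)\bigr)^k\,\nu(du)\le \bigl(M/(1+M)\bigr)^k
\end{equation*}
for every $k\ge 1$, and similarly for $\eta_0\le -k$. Comparing exponential decay rates as $k\to\infty$ forces $\mathrm{supp}(\nu)\subset[-M,M]$. The final inequality in the proposition then follows by a standard subsequence/contradiction argument: if $\limsup_L\mu^L_t(f)>\max_{|u|\le M}\rho^u(f)$, extract along a subsequence converging to some $\rho^\nu$ and use $\rho^\nu(f)=\int\rho^u(f)\,\nu(du)\le\max_u\rho^u(f)$.

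The main obstacle is step three, the identification $\bbE[(\mathcal L f)(\theta_z\eta(s))]=\partial_s\bbE[f(\theta_z\eta(s))]$: the periodization creates artificial boundary jumps that must be shown not to contaminate the bulk, and the two-species annihilating nature of our zero-range process means one cannot simply cite an off-the-shelf classification theorem. If \cite{cf:Andjel} does not apply verbatim, I would fall back on directly checking that $\rho^u$ is invariant (e.g.\ by testing $\bbE_{\rho^u}[\mathcal L f]=0$ on single-site local $f$) and that translation invariance together with the particle-conservation structure force any extremal stationary measure to be one of the $\rho^u$.
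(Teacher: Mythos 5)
Your overall architecture matches the paper's: tightness from Lemma \ref{reloumec}~(iii) plus Kolmogorov extension, translation invariance from the spatial average over a full period, stationarity by time-integrating the generator and absorbing the $O(K)$ boundary indices, support of $\nu$ via comparison of geometric tails, and the final inequality by a subsequence extraction. All of those steps are fine as you sketch them.

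However, there is a genuine gap exactly where you flag uncertainty, and your proposed fallback does not close it. The obstruction is not whether $\rho^u$ is invariant for the two-species dynamics (it is, trivially, since under $\rho^u$ the $\eta_x$ are a.s.\ of one sign and the two-species generator reduces to the one-species one). The obstruction is to \emph{classify} all translation-invariant stationary measures for the two-species annihilating zero-range process, because \cite{cf:Andjel} applies only to the one-species process. A priori the limit $\mu$ could give positive mass to configurations where both signs coexist, and for such configurations $\mathcal L^\infty$ is not a one-species zero-range generator. The paper resolves this with a monotonicity argument you do not supply: the number of sign changes in $(\eta_x)_x$ is non-increasing under the dynamics, and it starts from a finite value (the number of changes of monotonicity of $\phi^0$, plus one from periodization — this is precisely why the hypothesis that $\phi^0$ has finitely many monotonicity changes is imposed). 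Consequently, for every finite window $[-K,K]$ the $\mu^L_t$-probability of observing a sign change there is $O(K/L)\to 0$, hence $\mu$-almost surely all nonzero $\eta_x$ share a common sign. Only after this reduction can one legitimately identify $\mathcal L^\infty$ with the one-species zero-range generator and invoke Andjel's classification. Without this step, "translation invariance together with the particle-conservation structure" does not by itself rule out mixed-sign stationary measures, so your argument as written is incomplete.
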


\begin{rem}\rm
Note that the convergence does not hold in the topology induced by the total variation distance: 
indeed $\mu^L_t$ give mass one to $L$-periodic $\eta$ whereas these configurations have mass zero
for the limiting measure.
\end{rem}

\begin{proof}[Proof of Proposition \ref{wconv}]
  For any fixed $K>0$, the sequence of laws of $(\eta_x)_{x\in
    [-K,K]}$ under $\mu^{L_n}_t$ is tight by Lemma \ref{reloumec}
  $(iii)$ and hence we can extract a converging subsequence.  By
  diagonal extraction it is possible to extract a subsequence $L'_n$
  of $L_n$ and a family of measures $(\mu_K)_{K\geq 0}$ on
  $\bbZ^{[-K,K]}$ such that the law of $(\eta_x)_{x\in [-K,K]}$ under
  $\mu^{L'_n}_t$ converges to $\mu^K$ for all $K$.  By construction for
  $H\geq K$, $\mu_H$ projected on $\bbZ^{[-K,K]}$ is equal to $\mu_K$
  and by Kolmogorov extension theorem, there exists a measure $\mu$ on
  $\Z^\Z$ such that $\mu$ projected on $\bbZ^{[-K,K]}$ equals $\mu_K$ for all
  $K$.  One has therefore for all local function $f$
\begin{equation}
\lim_{n\to\infty} \mu^{L'_n}_t(f)=\mu(f).
\end{equation}
We have to show that $\mu$ can be written as $\rho^\nu$.  First one
remarks that $\mu^{L_n}_t$ is translation invariant, so that $\mu$ is
too.  A second point to make is that $\mu$ -almost surely all the
$\eta_x$ (that are not equal to zero) have the same sign.  Indeed
\begin{multline}
\mu\left(\exists x, x'\in \bbZ, \eta_x\eta_x'<0\right)=\lim_{K\to\infty} \mu\left(\exists x, x'\in [-K,K], \eta_x\eta_x'<0\right)\\
=\lim_{K\to\infty}\lim_{n\to\infty}\mu_t^{L'_n}\left(\exists x, x'\in [-K,K], \eta_x\eta_x'<0\right)
\end{multline}
and 
\begin{multline}
\mu_t^{L}\left(\exists x, x'\in [-K,K], \eta_x\eta_x'<0\right)
\\
=\frac{1}{tL^2(2L+1)}\int_0^{L^2t}\sum_{y=-L}^L\bbP\left[\exists x, x'\in [-K+y,K+y], \eta_x(s)\eta_{x'}(s)<0\right]\dd s.
\end{multline}
One realizes easily that
\begin{equation}
 \sum_{y=-L}^L\ind_{\{\exists x, x'\in [-K+y,K+y], \eta_x\eta_{x'}<0\}}
\end{equation}
is upper bounded by $(2K+1)$ times the number of 
changes of sign 
in $(\eta_x)_{x\in[-L,L+1]}$.  From the definition of the dynamics, a
transition can only lower the number of changes of sign.
 Its initial value is smaller than the
number of changes of monotonicity of $\phi^0$ (which is assumed to be finite) plus one (the ``plus one'' can  come 
from periodizing).  Therefore
\begin{equation}
\sum_{y=-L}^L\bbP\left[\exists x, x'\in [-K+y,K+y], \eta_x(s)\eta_{x'}(s)<0\right]\le 2K C(\phi^0).
\end{equation}

A third point is to show is that $\mu$ is an invariant measure for the
infinite volume dynamics (the infinite volume version of
\eqref{dnaspo2}, call its generator $\mathcal L^\infty$).  For $f$ a
$K$-local function one has (for $L\ge K$ large enough)
\begin{equation}
\mu_t^L (\mathcal L^{\infty} f)=\frac 1 {tL^2}  \frac{1}{2L+1}\int_0^{L^2 t} \sum^{L}_{y=- L}\bbE\left(\mathcal L ^\infty(f\circ \theta_y)(\eta(s))\right)\dd s.
\end{equation}
For $y\in [-L+K,L-K]$ the infinite volume generator applied to $f$ has the same effect as the finite volume generator so that 
\begin{equation}
\int_0^t \bbE \left[\mathcal L^\infty (f\circ \theta_y)(\eta(s))\right]\dd s = \int_0^t \partial_s \bbE \left[ (f\circ \theta_y)(\eta(s))\right]\dd s 
=  \bbE \left[ (f\circ \theta_y)(\eta(t))-(f\circ \theta_y)(\eta(0))\right].
\end{equation}
Therefore
\begin{multline}
  \mu_t^L (\mathcal L^\infty f) =\frac 1 {tL^2} \frac{1}{2L+1}
  \sum^{L-K}_{y=- L+K} \bbE \left[ (f\circ \theta_y)(\eta(tL^2)) -(f\circ
    \theta_y)(\eta(0))\right]
  \\
  + \frac 1 {tL^2} \frac{1}{2L+1}\int_0^{L^2 t}
  \left(\sum^{-L+K-1}_{y=- L}+\sum^{L}_{y=L-K+1}\right)\bbE \left(\mathcal L^\infty (f\circ
  \theta_y)(\eta(s))\right)\dd s =O(1/L).
\end{multline}
As a consequence,  for any local function
\begin{equation}
\mu(\mathcal L^\infty f)=\lim_{n\to\infty} \mu_t^{L'_n} (\mathcal L^\infty f)=0.
\end{equation}
Restricted on the event $\eta_x$ have all the same sign, $\mathcal L^\infty$ is the generator of the zero-range process with one type of particle
and therefore $\mu$ is a translation invariant measure for the zero-range process. From \cite[Theorem 1.9]{cf:Andjel}
one can write $\mu=\rho^\nu$ for some $\nu$.
By Lemma \ref{reloumec} $(iii)$, under $\mu$, at time zero $\eta$ is dominated by a IID family of geometric variables of mean $\| \partial_x\phi^0\|_\infty$
and so is  $-\eta$. This implies the claim on the support of $\nu$.

\medskip

The second point of  Proposition \ref{wconv} is standard; we include its proof
for completeness.  Given a local $f$ one can extract a subsequence
$L_n$ such that
\begin{equation}
\lim_{n\to \infty} \mu_t^{L_n}(f)=\limsup_{L\to \infty} \mu_t^{L}(f).
\end{equation}
From $L_n$ one can extract a subsequence $L'_n$ such that
$\mu_t^{L'_n}$ converges to $\rho^\nu$ so that
\begin{equation}
\lim_{n\to \infty} \mu_t^{L_n}(f)=\lim_{n\to \infty} \mu_t^{L'_n}(f)=\int \rho^u (f)\nu(\dd u),
\end{equation}
which ends the proof.
\end{proof}

Fix $l$ to be a large fixed integer.
Set for $y\in \bbZ$
\begin{equation}
B_y:=\{1+y,\dots, l+y\}.
\end{equation}
For notational convenience, similarly to $\eta$ with \eqref{periodiz}, one considers now periodized version $(q_x(s))_{s\in \bbZ}$ of $q(s)$ and $(A(x,s))_{x\in \bbZ}$ of $A(\cdot,s)$.

Now, one uses Proposition \ref{wconv} to control each term in $\bbE
\sum A(s,x)$.
\begin{lemma}\label{djspoo}
\begin{equation}
\label{eq:djs}
 \lim_{l\to\infty} \limsup_{L\to\infty} \frac{1}{L^3}\bbE \int_0^{L^2t}  \sum_{y=- L}^{ L}  \left|\left( \frac{1}{l}
 \sum_{x\in B_y}q_x(s)\sign(\eta_x(s))\right)-q_{y}(s)\sigma
 \left(\frac{1}{l}\sum_{x\in B_y} \eta_x(s)\right) \dd s\right|=0.
 \end{equation}
\end{lemma}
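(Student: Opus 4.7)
The plan is to decompose the integrand in \eqref{eq:djs} into a piece that is negligible thanks to the smoothness in $x$ of the deterministic gradient $q_x(s)$, and a piece that is a spatial average of a local function of the particle configuration. The second piece can then be controlled through Proposition \ref{wconv}, which brings the problem down to evaluating expectations under the product geometric measure $\rho^u$; at that point the key input is the identity $\bbE_{\rho^u}[\sign(\eta_0)]=\sigma(u)$, which is precisely the reason why $\sigma$ appears in \eqref{eq:edpdiscret}.

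Concretely, for $y\in\{-L,\dots,L\}$ write
$$\frac{1}{l}\sum_{x\in B_y} q_x(s)\sign(\eta_x(s))-q_y(s)\sigma\Bigl(\frac{1}{l}\sum_{x\in B_y}\eta_x(s)\Bigr)=\mathrm{I}_y(s)+\mathrm{II}_y(s),$$
with $\mathrm{I}_y(s):=\frac{1}{l}\sum_{x\in B_y}(q_x(s)-q_y(s))\sign(\eta_x(s))$ and $\mathrm{II}_y(s):=q_y(s)\,f_l(\theta_y\eta(s))$, where
$$f_l(\eta):=\frac{1}{l}\sum_{x=1}^{l}\sign(\eta_x)-\sigma\Bigl(\frac{1}{l}\sum_{x=1}^{l}\eta_x\Bigr).$$
Lemma \ref{reloumec}(ii) gives $|q_x(s)-q_y(s)|\le C(\phi^0)\,l/L$ for $x\in B_y$, hence $|\mathrm{I}_y(s)|\le C(\phi^0)\,l/L$. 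Summing over $y$, integrating in $s\le L^2 t$ and dividing by $L^3$, the total contribution of the $\mathrm{I}$-term is $O(l/L)$, which vanishes as $L\to\infty$ at $l$ fixed.

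For the $\mathrm{II}$-term, Lemma \ref{reloumec}(i) yields $|q_y(s)|\le M:=\|\partial_x\phi^0\|_\infty$, and then using the definition \eqref{eq:defmu} of $\mu_t^L$,
$$\frac{1}{L^3}\,\bbE\int_0^{L^2 t}\sum_{y=-L}^{L}|\mathrm{II}_y(s)|\,\dd s\;\le\; M\,\frac{(2L+1)\,t}{L}\,\mu_t^L(|f_l|).$$
Since $|f_l|$ is an $l$-local bounded function, Proposition \ref{wconv} gives $\limsup_{L\to\infty}\mu_t^L(|f_l|)\le\sup_{|u|\le M}\rho^u(|f_l|)$. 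Therefore it suffices to prove
$$\lim_{l\to\infty}\ \sup_{|u|\le M}\rho^u(|f_l|)=0.$$

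The last step is an application of the law of large numbers that is uniform in $u\in[-M,M]$. Under $\rho^u$ the variables $\eta_x$ are i.i.d., with $\bbE_{\rho^u}[\eta_0]=u$, $\bbE_{\rho^u}[\sign(\eta_0)]=\sigma(u)$, and both $\mathrm{Var}_{\rho^u}(\eta_0)$ and $\mathrm{Var}_{\rho^u}(\sign(\eta_0))$ bounded uniformly on $[-M,M]$. Cauchy-Schwarz gives
$$\bbE_{\rho^u}\Bigl|\tfrac{1}{l}\sum\sign(\eta_x)-\sigma(u)\Bigr|\le C/\sqrt l,\qquad \bbE_{\rho^u}\Bigl|\tfrac{1}{l}\sum\eta_x-u\Bigr|\le C/\sqrt l,$$
and the $1$-Lipschitz property of $\sigma$ transfers the second estimate to $\sigma(\frac{1}{l}\sum\eta_x)$. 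A triangle inequality then yields $\rho^u(|f_l|)=O(1/\sqrt l)$, uniformly in $|u|\le M$. The main conceptual difficulty has already been absorbed by Proposition \ref{wconv} (where the characterisation of invariant measures from \cite{cf:Andjel} is used); here the content is just the matching of $\bbE_{\rho^u}[\sign]$ with $\sigma$, which is a direct computation for the (signed) geometric distribution of mean $u$.
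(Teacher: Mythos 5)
Your proof follows the paper's argument essentially verbatim: the same split into a term controlled by $|q_x(s)-q_y(s)|=O(l/L)$ via Lemma \ref{reloumec}(ii) and a term of the form $q_y(s)\cdot f_l(\theta_y\eta(s))$ controlled by $\mu_t^L$ and Proposition \ref{wconv}, reduced at the end to a second-moment law of large numbers for $\rho^u$ uniform in $u$. Your write-up is if anything slightly more complete than the paper's, since you spell out the Cauchy--Schwarz/Lipschitz estimates that the paper only gestures at; the only small point the paper handles and you gloss over is that after periodization the bound $|q_x(s)-q_y(s)|\le C(\phi^0)l/L$ holds only for $y\in\{-L,\dots,L-l\}$, and the remaining $O(l)$ boundary terms must be absorbed separately (they are harmless after dividing by $L^3$).
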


\begin{proof}[Proof of Lemma \ref{djspoo}] 
Fix $l>0$.  For $L$ large enough, any all $y\in\{- L,\ldots, L-l\}$, 
\begin{multline}
\left|\left(\frac{1}{l}\sum_{x\in B_y}q_x(s)\sign(\eta_x(s))\right)-q_{y}(s)\sigma\left(\frac{1}{l}\sum_{x\in B_y} \eta_x(s)\right)\right|\\
 \le |q_y(s)| \left|\left(\frac{1}{l}\sum_{x\in B_y}\sign(\eta_x(s))\right)-\sigma\left(\frac{1}{l}\sum_{x\in B_y} \eta_x(s)\right)\right|
+\max_{x\in B_y} |q_x(s)-q_y(s)|.
 \end{multline}
 Moreover, uniformly in $y\in\{- L,\ldots, L-l\}$, as a consequence of Lemma \ref{reloumec} $(ii)$ 
 \begin{equation}
 \max_{y\in\{- L,\ldots, L-l\},x\in B_y, s\ge 0} |q_x(s)-q_y(s)|=O(l/L).
 \end{equation}
  The contribution of $y\in \{L-l+1,L\}$  to the sum
under the integral in \eqref{eq:djs} is $O(l)$.
Therefore summing over $y\in\{- L,\ldots, L\}$, integrating over $s$
and taking expectation one gets
\begin{multline}
\left| \int_0^{tL^2} \bbE \left[\sum_{y=- L}^{ L}\left( \frac{1}{l}
 \sum_{x\in B_y}q_x(s)\sign(\eta_x(s))\right)-q_{y}(s)\sigma
 \left(\frac{1}{l}\sum_{x\in B_y} \eta_x(s)\right)\right]\dd s\right|\\
 \le (\max_{y,s} |q_y(s)|) \left| \int_0^{tL^2} \bbE \left[\sum_{y=- L}^{ L}\left( \frac{1}{l}
 \sum_{x\in B_y}\sign(\eta_x(s))\right)-\sigma
 \left(\frac{1}{l}\sum_{x\in B_y} \eta_x(s)\right)\right]\dd s\right|
 +O(lL^2)\\
 =(\max_{y,s} |q_y(s)|)
 tL^2(2L+1)\mu_t^L\left( \left|\left(\frac{1}{l}\sum_{x\in B_0}\sign(\eta_x)\right)-\sigma\left(\frac{1}{l}\sum_{x\in B_0} \eta_x\right)\right|\right)+O(lL^2)
 \end{multline}
where $\mu_t^L$ is defined in \eqref{eq:defmu}.
Therefore, the proof of our statement is finished provided one  proves 
\begin{equation}
\lim_{l\to \infty} \limsup_{L\to \infty} \mu_t^L\left( \left|\left(\frac{1}{l}\sum_{x\in B_0}\sign(\eta_x)\right)-\sigma\left(\frac{1}{l}\sum_{x\in B_0} \eta_x\right)\right|\right)=0.
\end{equation}
From Proposition \ref{wconv} one has
\begin{multline}\label{erce}
\limsup_{L\to \infty} \mu_t^L\left( \left|\left(\frac{1}{l}\sum_{x\in B_0}\sign(\eta_x)\right)-\sigma\left(\frac{1}{l}\sum_{x\in B_0} \eta_x\right)\right|\right)\\
\le \sup_{0\le u\le \| \partial_x \phi^0\|_\infty} \rho^u 
\left( \left|\left(\frac{1}{l}\sum_{x\in B_0}\sign(\eta_x)\right)-\sigma\left(\frac{1}{l}\sum_{x\in B_0} \eta_x\right)\right|\right)
\end{multline}
and one can check that the right-hand side term tends to zero when $l$
tends to infinity: we note that under $\rho^u$, for every $x$ one has
$\rho^u(\sign(\eta_x))=\sigma(u)$, and the law of large numbers tell
us that the two terms $\frac{1}{l}\sum_{x\in B_y}\sign(\eta_x)$ and
$\sigma\left(\frac{1}{l}\sum_{x\in B_y} \eta_x\right)$ have the same
limit when $l$ tends to infinity.  However, because of the $\sup$ over
$u$ one needs more quantitative estimates than the law of large
numbers to conclude. For instance we can get them by the use of second
moment method; we leave the details to the reader.
\end{proof}

Similarly to Lemma \ref{djspoo}
one shows that
\begin{lemma}\label{leptilem}
\begin{equation}
\lim_{l\to\infty} \limsup_{L\to\infty} \frac{1}{L^3} \int_0^{L^2t} \sum_{y=- L}^{ L} \bbE
(G(\eta(s)))= \lim_{l\to\infty} \limsup_{L\to\infty}
\frac{t(2L+1)}L\mu^L_t(G(\eta))=0
\end{equation}
where
\begin{eqnarray}
G(\eta)= \left|\left(\frac{1}{l}\sum_{x\in B_y} 
 |\eta_x(s)|-|\sign(\eta_x(s)| \right)- 
  \left(\frac{1}{l}\sum_{x\in B_y}   \eta_x(s)\right) \sigma\left( \frac{1}{l}\sum_{x\in B_y}   \eta_x(s) \right)\right|.
\end{eqnarray} 
\end{lemma}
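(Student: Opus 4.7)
The plan is to follow the strategy of Lemma \ref{djspoo} line by line. First I would rewrite the quantity as
\begin{equation}
\frac{1}{L^3}\int_0^{L^2 t}\sum_{y=-L}^{L}\bbE[G(\theta_y\eta(s))]\,\dd s = \frac{t(2L+1)}{L}\,\mu_t^L(G(\eta)),
\end{equation}
where $G$ is regarded as the $l$-local function $\eta\mapsto |\frac{1}{l}\sum_{x=1}^l (|\eta_x|-|\sign(\eta_x)|)-(\frac{1}{l}\sum_{x=1}^l\eta_x)\sigma(\frac{1}{l}\sum_{x=1}^l\eta_x)|$ evaluated at $\theta_y\eta(s)$ (up to negligible boundary terms of order $l$). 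Since the prefactor $t(2L+1)/L$ stays bounded, the task reduces to showing
\begin{equation}
\lim_{l\to\infty}\limsup_{L\to\infty}\mu_t^L(G(\eta))=0.
\end{equation}

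I would then invoke Proposition \ref{wconv} to bound $\limsup_{L\to\infty}\mu_t^L(G(\eta))$ by $\sup_{|u|\le\|\partial_x\phi^0\|_\infty}\rho^u(G(\eta))$. The delicate point, compared to Lemma \ref{djspoo}, is that $G$ is \emph{unbounded} because $|\eta_x|$ appears, so the weak-convergence statement \eqref{weakconv} cannot be applied directly to $G$. I would handle this by truncation: set $G_M:=\min(G,M)$, apply \eqref{weakconv} to the bounded $l$-local function $G_M$, and control the tail $G-G_M$ uniformly in $L$ and $l$ using the stochastic domination of $\pm\eta(s)$ by IID geometric variables of mean $\|\partial_x\phi^0\|_\infty$ provided by Lemma \ref{reloumec}(iii). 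This domination yields a uniform exponential moment, hence $\sup_L\mu_t^L(G\mathbf{1}_{G>M})\to 0$ as $M\to\infty$, and the same holds for $\rho^u$ uniformly over $u$ in the bounded support of $\nu$.

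It then remains to compute $\lim_{l\to\infty}\rho^u(G(\eta))$ uniformly for $u$ in a bounded interval. Under $\rho^u$ the $(\eta_x)_{x=1,\dots,l}$ are IID (signed) geometric variables with $\rho^u(\eta_0)=u$, $\rho^u(|\eta_0|)=|u|$, and $\rho^u(\sign(\eta_0))=\sigma(u)$, so $\rho^u(|\eta_0|-|\sign(\eta_0)|)=|u|-|\sigma(u)|=u\sigma(u)$. The law of large numbers together with continuity of $\sigma$ yields convergence in $\rho^u$-probability of the quantity inside $|\cdot|$ to $u\sigma(u)-u\sigma(u)=0$, and the geometric tails again supply the uniform integrability needed to promote this to $L^1(\rho^u)$-convergence that is uniform in $u\in[-\|\partial_x\phi^0\|_\infty,\|\partial_x\phi^0\|_\infty]$. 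Quantitatively, a second-moment bound as in \eqref{erce} suffices.

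The main obstacle is the unboundedness of $G$: this is the only real point where the proof deviates from that of Lemma \ref{djspoo}, and it must rely essentially on the uniform stochastic-domination estimate of Lemma \ref{reloumec}(iii) both to truncate inside $\mu_t^L$ and to apply dominated convergence under $\rho^u$. Everything else is a routine application of the previously established tools.
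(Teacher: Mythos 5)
Your proof is correct and follows essentially the same route as the paper: you identify the unboundedness of $G$ (due to the $|\eta_x|$ term) as the sole new difficulty compared to Lemma~\ref{djspoo}, resolve it by truncating $G_M:=\min(G,M)$, control the tail uniformly via the stochastic domination of Lemma~\ref{reloumec}(iii), and then conclude using the identity $\rho^u(|\eta_0|-|\sign(\eta_0)|)=u\sigma(u)$ together with the law of large numbers exactly as in the preceding lemma — which is precisely the paper's argument.
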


\begin{proof}
The proof is very similar to that of Lemma \ref{djspoo}, the
only additional technical point being that
the function $G(\eta)$
is not bounded so that one cannot use directly Proposition \ref{wconv}. However stochastic domination given by Lemma \ref{reloumec} $(iii)$
allows us to get the same conclusion by considering the function 
$\eta\mapsto \min (G(\eta),K)$, and letting  $K$ tend to infinity afterwards.
Altogether one gets
\begin{eqnarray}
\limsup_{L\to \infty} \mu_t^L(G)
 \le \sup_{0\le u\le \| \partial_x \phi^0\|_\infty} \rho^u(G).
\end{eqnarray}
We end the proof in the same way that for the previous Lemma remarking that 
\[\rho^u ( |\eta_x|-|\sign(\eta_x)|)=u\sigma(u).\]
\end{proof}

Now we are ready to conclude:
\begin{multline}
\label{eq:factorisation}
\sum_{y=- L}^{ L} A(y,s)\\
=\sum_{y=- L}^{ L}\left\{ -q_y(s)\sigma(q_y(s))+\frac{1}{l}\left(\sum_{x\in B_y} \eta_x(s)\sigma(q_x(s))+
q_x(s)\sign(\eta_x(s))-(|\eta_x(s)|-|\sign(\eta_x(s)|)\right)\right\}\\ \le \sum_{y=- L}^{ L}
-q_y(s)\sigma(q_y(s)+\left( \frac{1}{l}\sum_{x\in B_y}\eta_x(s)\right)\sigma(q_y(s))
+ q_y(s)\sigma\left(\frac{1}{l}\sum_{x\in B_y}\eta_x(s)\right)\\
-\left( \frac{1}{l}\sum_{x\in B_y}\eta_x(s)\right)\sigma\left(\frac{1}{l}\sum_{x\in B_y}\eta_x(s)\right)
+R(s,l,L)\\
=R(s,l,L)-\sum_{y=- L}^{ L}
\left[q_y(s)-\left( \frac{1}{l}\sum_{x\in B_y}\eta_x(s)\right)\right]\left[\sigma(q_y(s))-\sigma\left(\frac{1}{l}\sum_{x\in B_y}\eta_x(s)\right)\right]
\end{multline}
where 
\begin{multline}
R(s,l,L)= -\sum_{y=- L}^{ L}\frac{1}{l}\left(\sum_{x\in B_y} \eta_x(s)(\sigma(q_y(s))-\sigma(q_x(s)))\right)\\
+  \left| \left( \frac{1}{l}
 \sum_{x\in B_y}q_x(s)\sign(\eta_x(s))\right)-q_{y}(s)\sigma
 \left(\frac{1}{l}\sum_{x\in B_y} \eta_x(s)\right) \right|\\
+ \left|\left(\frac{1}{l}\sum_{x\in B_y} 
 |\eta_x(s)|-|\sign(\eta_x(s)| \right)- 
\left(\frac{1}{l}\sum_{x\in B_y}   \eta_x(s)\right) \sigma\left( \frac{1}{l}\sum_{x\in B_y}   \eta_x(s) \right)\right|
\end{multline}
and the second term is non positive ($a-b$ and $\sigma(a)-\sigma(b)$ have the same sign).

According to $(ii)-(iii)$ in Lemma \ref{reloumec} (to control the first term) and Lemmata \ref{djspoo} and \ref{leptilem}
\begin{equation}
\lim_{l\to\infty} \limsup_{L\to 0}  \frac{1}{L^3}\int_0^{L^2t} \bbE \ R(s,l,L)\dd s =0.
\end{equation}
This implies
\begin{equation}
\limsup_{L\to\infty} \frac{1}{L^3}\int_0^{L^2t}\sum_{y=- L}^{ L} \bbE A(x,s)\dd s\le 0
\end{equation}
and therefore the result that we want to prove,  from \eqref{reza}.

\end{proof}

\subsection{Concluding the proof of Theorem \ref{spo}}
\label{sec:concluding}
It is not hard to transform the $\bbL_2$ statement of Proposition
\ref{prop:propspohn} into the desired ``almost sure'' statement:
\begin{proposition}
For any $\gep>0$, $t\ge 0$, w.h.p
\begin{equation}
\max_{x\in \{-L,\dots,L+1\}}\frac{1}{L}|\Phi(x,L^2 t)- \hat h_x(L^2 t)|\le \gep.
\end{equation}
\end{proposition}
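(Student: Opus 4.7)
My plan is to upgrade the $\bbL^2$-type statement of Proposition \ref{prop:propspohn} into the desired uniform bound via a local-averaging argument. Set $f(x):=\Phi(x,L^2t)-\hat h_x(L^2t)$. By Markov's inequality applied with threshold $\delta_L:=\sqrt{\bbE\bigl[\tfrac{1}{L^3}\sum_x f(x)^2\bigr]}$, Proposition \ref{prop:propspohn} gives $\delta_L\to 0$ together with
\[
\frac{1}{L^3}\sum_{x=-L}^{L+1}f(x)^2\le \delta_L\quad\text{w.h.p.}
\]
The naive $\bbL^2\to\bbL^\infty$ conversion would proceed as follows: if $f$ is $K$-Lipschitz on $\{-L,\dots,L+1\}$ and $|f(x_0)|=M$, then $|f|\ge M/2$ on an interval of length $M/(2K)$, so $\sum_x f(x)^2\ge M^3/(8K)$ and $M\le (8K\delta_L L^3)^{1/3}$. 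The obstacle is that $f$ is only $O(\log L)$-Lipschitz w.h.p.: while $|q_x(L^2t)|\le \|\partial_x\phi^0\|_\infty$ by Lemma \ref{reloumec}(i), the stochastic domination of Lemma \ref{reloumec}(iii) combined with a union bound over the $O(L)$ sites gives only $\max_x|\hat\eta_x(L^2t)|=O(\log L)$ w.h.p., and we have no quantitative rate on $\delta_L\to 0$ able to absorb this factor.

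To circumvent the obstacle, I would smooth at the intermediate scale $l:=\lceil (\log L)^2\rceil$, defining
\[
\bar f(y):=\frac{1}{l}\sum_{x=y}^{y+l-1}f(x),\qquad y\in\{-L,\dots,L+1-l\}.
\]
Cauchy--Schwarz immediately yields $\sum_y \bar f(y)^2\le \sum_x f(x)^2\le \delta_L L^3$ w.h.p. The key point is that $\bar f$ has a gradient bounded by an \emph{absolute} constant $K$, uniformly in $y$ w.h.p.: indeed
\[
\bar f(y+1)-\bar f(y)=\frac{1}{l}\sum_{x=y}^{y+l-1}\bigl(q_x(L^2t)-\hat\eta_x(L^2t)\bigr),
\]
and the joint stochastic domination of Lemma \ref{reloumec}(iii) (applied separately to $\hat\eta$ and $-\hat\eta$) combined with Chernoff concentration for sums of $l$ i.i.d.\ geometric variables and a union bound over the $O(L)$ values of $y$ gives $|\tfrac{1}{l}\sum_{x=y}^{y+l-1}\hat\eta_x(L^2t)|\le 2\|\partial_x\phi^0\|_\infty$ uniformly in $y$ w.h.p.; the choice $l\gg \log L$ is dictated precisely by the need for the $e^{-cl}$ block-decay to beat the $O(L)$ union bound. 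Applying the Lipschitz-extension argument above to $\bar f$ with this absolute constant $K$ then yields $\max_y|\bar f(y)|\le (8K\delta_L L^3)^{1/3}=o(L)$ w.h.p.

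Finally, I would compare $f$ with its local average $\bar f$: since $f$ itself is $O(\log L)$-Lipschitz w.h.p.\ (same geometric-tail plus union bound), $|f(x)-\bar f(x)|\le l\cdot O(\log L)=O((\log L)^3)=o(L)$ for $x\in\{-L,\dots,L+1-l\}$, and for the $O(l)$ boundary indices $x\in\{L+2-l,\dots,L+1\}$ not covered by $\bar f$ one has $|f(x)|\le |f(L+1-l)|+l\cdot O(\log L)=o(L)$ w.h.p. Combining, $\max_x|f(x)|/L\to 0$ in probability, which is the claim. The main delicate step is the Chernoff-plus-union-bound argument establishing the deterministic Lipschitz bound on $\bar f$; once it is in place, everything else is routine.
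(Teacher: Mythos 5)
Your proposal is correct and takes essentially the same approach as the paper's proof: both use the geometric stochastic domination of Lemma \ref{reloumec}(iii) together with concentration over super-logarithmic blocks and a union bound to obtain a coarse-grained Lipschitz control on $\Phi(\cdot,L^2t)-\hat h_\cdot(L^2t)$ with an \emph{absolute} constant, and then observe that a large $\bbL_\infty$ error would force a large $\bbL_2$ error, contradicting Proposition \ref{prop:propspohn}. The only difference is cosmetic: the paper phrases the estimate as $|h_x-h_y|\le C|x-y|$ for $|x-y|\ge\log L$ w.h.p.\ and performs the $\bbL_2\to\bbL_\infty$ conversion on $f$ directly, whereas you introduce the block average $\bar f$ at scale $(\log L)^2$, prove a genuine Lipschitz bound for $\bar f$, and then compare $f$ to $\bar f$.
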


\begin{proof}
Also here we write $h$ for $\hat h$.
Note that from Lemma \ref{reloumec} $(iii)$ the random vector
$(|\eta_x(t)|)_{x\in\{ -L,\dots,L\}}$ is stochastically dominated for
every $t$ by a
vector of IID time-independent geometric variables.
This implies that there exists a constant $C$ such that for any $t\ge
0$, w.h.p.
\begin{equation}\label{artichaud}\begin{split}
 |h_x(t)-h_y(t)|\le C |x-y| \quad \text{for every\;\;} x, y \in \{ -L,\dots,L\}, |x-y|\ge \log L
\end{split}
\end{equation}
(this can be proved 
by using large deviation estimates and a union bound on  $ x, y \in \{
-L,\dots,L\}$). Moreover 
Lemma \ref{reloumec} $(i)$ ensures that  
$\Phi(\cdot,t)$ is always Lipschitz so that \eqref{artichaud} holds also for $\Phi(\cdot, t)-h_\cdot(t)$.

With \eqref{artichaud} and $L$ large enough, one has
\begin{gather}
\left\{\max_{x\in \{-L,\dots,L+1\}}|\Phi(x,L^2 t)- h_x(L^2 t)|\ge \gep L\right\}\subset
 \left\{\sum_{x\in \{-L,\dots,L+1\}}|\Phi(x,L^2 t)- h_x(L^2 t)|^2\ge \frac{\gep^3 L^3}{10 C}\right\}
\end{gather}
so that the left-hand side event has  small probability when $L$ is large, otherwise
Proposition \ref{prop:propspohn} would be false.
\end{proof}

\subsection{Laplacian bounds}
\label{sub:lb}
Recall that $\Phi(x,t)$ is the solution of the Cauchy problem
\eqref{eq:edpdiscret}. We want to bound  $\Phi(x,t)$ above and below
with the solution of a suitable heat equation. For
this, we will suppose that the function $\phi^0$, through which the
initial condition $\Phi_0$ for $\Phi(x,t)$ is defined, is concave on
$[-1,1]$ (in addition to the assumptions required for Theorem \ref{spo}).
One defines the evolution $\Phi_1(x,t)$ as the solution of  
\begin{equation}
\begin{cases}
 \partial_t \Phi_1(x,t)&=  \frac12\gD \Phi_1(x,t)\\
 \Phi_1( -L, t)&=\Phi( L+1,t)=0 \\ 
  \Phi_1(x, 0)&=\Phi_0(x)
\end{cases}
\end{equation}
for $t\ge0,  x\in\{- L+1,  L\}$. Also we define $\Phi_2(x,t)$ as the
solution of the analogous equation (with the same boundary values)
where the discrete Laplacian is multiplied by 
$(1/2)\sigma'(\| \partial_x \phi^0\|_\infty)=1/(1+\| \partial_x \phi^0\|_\infty)^2$.
\begin{proposition}\label{proplaplb}
For every $t\ge 0$ every $x\in \{- L,\dots,  L +1\}$ one has
\begin{equation}\label{trare}
 \Phi_1(x, t)\le  \Phi(x, t)\le  \Phi_2(x, t).
\end{equation}
\end{proposition}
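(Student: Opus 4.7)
The plan is to use the discrete parabolic maximum principle, after reducing the nonlinear evolution for $\Phi$ to a linear one with variable coefficient via the mean-value theorem. Applying the mean-value theorem to $\sigma(u)=u/(1+|u|)$ gives, for each $x$ and $t$,
\[
\sigma(q_x(t))-\sigma(q_{x-1}(t))=c_x(t)\,\gD\Phi(x,t),\qquad c_x(t):=\sigma'(\xi_x(t))\in[c_{\min},1],
\]
where $\xi_x(t)$ lies between $q_{x-1}(t)$ and $q_x(t)$, $\gD$ is the discrete Laplacian, and $c_{\min}:=\sigma'(\|\partial_x\phi^0\|_\infty)=(1+\|\partial_x\phi^0\|_\infty)^{-2}$. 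The bound on $c_x(t)$ uses the a priori estimate $|q_x(t)|\le\|\partial_x\phi^0\|_\infty$ from Lemma \ref{reloumec}(i), together with the fact that $\sigma'$ is even and decreasing in $|u|$. Thus $\Phi$ solves the linear equation $\partial_t\Phi(x,t)=\frac12 c_x(t)\,\gD\Phi(x,t)$ with a prescribed variable coefficient $c_x(t)\in[c_{\min},1]$.

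I then claim that $\Phi_1$ and $\Phi_2$ remain discretely concave in $x$ for all times, i.e.\ $D^{(i)}_x(t):=\gD\Phi_i(x,t)\le 0$ for $i=1,2$, $x\in\{-L+1,\dots,L\}$ and $t\ge 0$. Concavity at $t=0$ is inherited from that of $\phi^0$ applied to $\Phi_0(x)=L\phi^0(x/L)$. Preservation is proved by the maximum principle applied to $D^{(i)}_\cdot$. At an interior maximum $x^*\in\{-L+2,\dots,L-1\}$, $D^{(i)}_{x^*}$ satisfies the same linear heat equation as $\Phi_i$ and its time derivative at this point is $\frac{\alpha_i}{2}\gD D^{(i)}_{x^*}\le 0$ (with $\alpha_1=1$ and $\alpha_2=c_{\min}$). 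At the boundary site $x^*=-L+1$, the relation $\Phi_i(-L,t)=0$ yields
\[
\partial_t D^{(i)}_{-L+1}(t)=\tfrac{\alpha_i}{2} D^{(i)}_{-L+2}(t)-\alpha_i D^{(i)}_{-L+1}(t),
\]
which at a critical time $t^*$ satisfying $D^{(i)}_{-L+1}(t^*)=0$ reduces to $\frac{\alpha_i}{2}D^{(i)}_{-L+2}(t^*)\le 0$; the case $x^*=L$ is symmetric. The usual $\varepsilon t$-perturbation trick then rules out $D^{(i)}$ ever reaching a positive value.

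Finally, set $v_1:=\Phi-\Phi_1$ and $v_2:=\Phi_2-\Phi$, each vanishing at $t=0$ and on the boundary sites $\{-L,L+1\}$. A direct computation using the linear forms of the equations from the first two paragraphs gives
\[
\partial_t v_1=\tfrac12 c_x(t)\gD v_1+\tfrac12(c_x(t)-1)\gD\Phi_1,\qquad \partial_t v_2=\tfrac12 c_x(t)\gD v_2+\tfrac12(c_{\min}-c_x(t))\gD\Phi_2.
\]
The source terms are both non-negative: for $v_1$, $c_x-1\le 0$ and $\gD\Phi_1\le 0$; for $v_2$, $c_{\min}-c_x\le 0$ and $\gD\Phi_2\le 0$. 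The discrete parabolic maximum principle then applies: at a first time $t^*>0$ at which $\min_x v_i(\cdot,t^*)=0$ at an interior site $x^*$, one would have $\gD v_i(x^*,t^*)\ge 0$ and therefore $\partial_t v_i(x^*,t^*)\ge 0$, contradicting the fact that $v_i$ should become negative just after $t^*$. This yields $v_1,v_2\ge 0$, which is precisely \eqref{trare}.

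The main obstacle I anticipate is the concavity preservation in the second paragraph, and particularly the handling of the boundary sites of the second-difference operator: one must check that the zero boundary condition on $\Phi_i$ does not conspire with the linear heat dynamics to create a positive bulge of $D^{(i)}$ near the endpoints. The explicit formula for $\partial_t D^{(i)}_{-L+1}$ makes this transparent, but without it the argument would fail.
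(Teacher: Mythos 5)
Your proof takes a genuinely different route from the paper's. You first globally linearize the nonlinear evolution via the mean-value theorem (obtaining a variable coefficient $c_x(t)\in[c_{\min},1]$), then prove propagation of discrete concavity of $\Phi_1,\Phi_2$ by a maximum principle applied to $D^{(i)}_x=\gD\Phi_i(x,\cdot)$, and finally run a parabolic maximum principle on $v_1=\Phi-\Phi_1$ and $v_2=\Phi_2-\Phi$, whose source terms you sign using the concavity of $\Phi_1,\Phi_2$. The paper instead reasons only at a first-touching time $T$: strict concavity of $\Phi_2(\cdot,T)$ plus the touching condition force $\gD\Phi(x_0,T)<0$ at the contact point $x_0$, and a single pointwise mean-value bound there gives $\partial_t(\Phi_2-\Phi)(x_0,T)\ge 0$. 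Both arguments rest on the a priori bound $|q_x(t)|\le\|\partial_x\phi^0\|_\infty$ from Lemma~\ref{reloumec}(i) and on concavity of $\Phi_2$, but you additionally need concavity of $\Phi_1$, whereas the paper needs it only for $\Phi_2$ and only at the contact time; your approach is therefore more demanding but produces a cleaner-looking linear comparison structure.

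There is a genuine gap in the step ``concavity at $t=0$ is inherited from that of $\phi^0$'': the initial profile is \emph{not} discretely concave at the right boundary site $x=L$. Indeed $\Phi_0(L)=L\phi^0(1)=0$, and with the paper's convention $\phi^0(1+1/L)=0$ also $\Phi_0(L+1)=0$, while $\Phi_0(L-1)=L\phi^0(1-1/L)>0$; hence
\[
D^{(i)}_L(0)=\gD\Phi_0(L)=\Phi_0(L-1)=L\phi^0(1-1/L)\ \approx\ -\partial_x\phi^0(1)\ >\ 0
\]
whenever $\phi^0$ is concave with nonzero slope at $1$, as is the case for the profiles $\bar h$ actually fed into Corollary~\ref{spo2} in Section~\ref{sec:45}. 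So the premise $D^{(i)}_\cdot(0)\le 0$ on which your propagation argument rests is false, and in fact the source term $\tfrac12(c_x(0)-1)\gD\Phi_1(x,0)$ is strictly negative at $x=L$; a direct computation gives $\partial_t(\Phi-\Phi_1)(L,0)=\tfrac12\bigl[q_{L-1}(0)-\sigma(q_{L-1}(0))\bigr]<0$ (since $q_{L-1}(0)<0$ and $u-\sigma(u)$ has the sign of $u$), so $v_1$ immediately turns negative at that site. To be fair, the paper's own claim that $\Phi_2(\cdot,t)$ is strictly concave for all $t>0$ is subject to this same boundary anomaly, and the proposition ultimately enters only through the scaling-limit Corollary~\ref{spo2}, where an $O(1)$ discrepancy at a single boundary site is immaterial. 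But as a proof of the stated inequality \eqref{trare} as written, the concavity claim must be repaired---for instance by adjusting $\Phi_0$ at $x=L$ so that $\gD\Phi_0(L)\le 0$ and controlling the resulting $O(1)$ change by monotonicity, or by excluding the site $x=L$ from the comparison.
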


\begin{proof}
We prove the upper bound, the lower one being very similar.
Suppose that the result does not hold and set
\begin{equation}
T:=\max\{t \ | \    \Phi(x, t)\le  \Phi_2(x, t)
\mbox{\;for every\;} t\le T,x\in \{- L,\dots,  L +1\}\}.
\end{equation}
Note that by property of the heat-equation, $\Phi_2(x, t)$ is a
strictly concave function of $x$ for all positive $t$ (except in the case where
one starts from the flat initial condition but in that case the
statement is trivial).
Let $x_0$ be such that
\begin{equation}
\Phi(x_0, T)= \Phi_2(x_0, T).
\end{equation}
Then one remarks that 
$q_{x_0}(T)-q_{x_0-1}(T)<0$
(by strict concavity of  $\Phi_2(\cdot, T)$) and that by Lemma
\ref{reloumec} $ \max_x|q_x(t)|\le \| \partial_x \phi^0\|_\infty$  so that 
\begin{equation}
\sigma(q_{x_0}(T))-\sigma(q_{x_0-1}(T))<(q_{x_0}(T)-q_{x_0-1}(T))\sigma'(  \| \partial_x \phi_0\|_\infty)
\end{equation}
(since $\sigma'(\cdot)$ is decreasing on $\bbR^+$) and hence
\begin{multline}
2\partial_t [\Phi_2-\Phi](x_0, T)=
\sigma'(\| \partial_x \phi_0\|_\infty)\gD\Phi_2(x,t)-\sigma(q_{x_0}(T))+\sigma(q_{x_0-1}(T))\\
>\sigma'(\| \partial_x \phi_0\|_\infty)[(\Phi_2(x+1,t)+\Phi_2(x-1,t))-(\Phi(x+1,t)+\Phi(x-1,t))].
\end{multline}
Since the last expression is non-negative, one has $\Phi(x, t)<\Phi_2
(x, t)$ on an interval $[T,T+\gep(x)]$ for some $\gep(x)>0$, for every
$ x\in \{- L,\dots, L +1\}$ and that concludes the proof since the
only possibility is that $T=\infty$.
\end{proof}

\section*{Acknowledgments}

The authors would like to thank P.\ Cardaliaguet for enlightening
discussions and valuable comments on various analytical aspects of the
present work, and the referee for a careful reading of the manuscript. During part of the writing, H.L.\ was 
hosted by Instituto Nacional de Matemática Pura e Aplicada; he acknowledges kind hospitality and support.

\end{document}